\DeclareMathOperator{\Lie}{Lie}
\DeclareMathOperator{\Hom}{Hom}
\DeclareMathOperator{\id}{id}
\DeclareMathOperator{\ad}{ad}
\DeclareMathOperator{\ssc}{sc}
\DeclareMathOperator{\Int}{Int}
\DeclareMathOperator{\Aut}{Aut}
\DeclareMathOperator{\Gal}{Gal}
\DeclareMathOperator{\Pin}{Pin} 
\DeclareMathOperator{\Spin}{Spin} 
\DeclareMathOperator{\End}{End} 
\DeclareMathOperator{\Br}{Br}
\DeclareMathOperator{\inv}{inv}
\DeclareMathOperator{\tr}{tr}
\DeclareMathOperator{\Wall}{Wall}
\DeclareMathOperator{\fppf}{fppf}
\DeclareMathOperator{\disc}{disc}
\numberwithin{equation}{section}
\newtheorem{theorem}{Theorem}[section]
\newtheorem{corollary}[theorem]{Corollary}
\newtheorem{lemma}[theorem]{Lemma}
\newtheorem{proposition}[theorem]{Proposition}
\theoremstyle{definition}
\newtheorem{definition}[theorem]{Definition} 
\newtheorem{remark}[theorem]{Remark}
\begin{document}

\title[Reductive groups,  epsilon factors and Weil indices]
{Reductive groups, epsilon factors and Weil indices}

\author{Robert\ E.\ Kottwitz}
\address{Robert E. Kottwitz\\Department of Mathematics\\ University of Chicago\\
5734 University
Avenue\\ Chicago, Illinois 60637}

\email{kottwitz@math.uchicago.edu}

\subjclass[2010]{Primary 22E50; Secondary 11E08}


\maketitle 

\section{Introduction }

\subsection{Notation pertaining to the ground field $F$}

We work over a locally compact field $F$, assumed to be nondiscrete, 
and we fix a nontrivial character $\psi: F \to \mathbb C^\times$ on the additive
group $F$.
We make no restrictions on the characteristic of $F$. 
Finally we choose a separable algebraic closure $\overline{F}$ of $F$ and put
$\Gamma := \Gal(\overline{F}/F)$.

\subsection{Notation pertaining to the group $G$} 

We consider a connected reductive group $G$ over $F$. We write $G_{\ad}$ for its
adjoint group and
$G_{\ssc}$ for the simply connected cover of the derived group of $G$. There are
then
canonical homomorphisms 
\[
G_{\ssc} \to G \to G_{\ad}. 
\]

We choose a quasi-split inner form $G_0$ of $G$. We also choose a maximal $F$-torus
$T_0$ in $G_0$ having
the property that there exists a Borel $F$-subgroup $B_0$ containing it. It is
well-known that $T_0$ is unique up
to conjugacy under $G_0(F)$. 

\subsection{Three ingredients in the main theorem}

\subsubsection{First ingredient} 

The first ingredient  is the sign $e(G) \in \{\pm 1 \}$ attached to $G$ in 
\cite{SignChanges}. 

Now let $T$ be a maximal $F$-torus in $G$. 
The remaining two ingredients are complex fourth roots of unity that depend on
$(G,T)$.

\subsubsection{Second ingredient}

The Galois group $\Gamma$ acts on the cocharacter group $X_*(T)$. Of course the
action is trivial on some open
subgroup of $\Gamma$. In any event $X_*(T)_\mathbb R$ is a real representation 
of $\Gamma$, and $X_*(T)_\mathbb C$ is a complex orthogonal representation of
$\Gamma$.

We are interested in the complex orthogonal  representations 
$X_*(T)_\mathbb C$ and 
$X_*(T_0)_\mathbb C$. Their difference 
$X_*(T)_\mathbb C - X_*(T_0)_\mathbb C$ is a
virtual orthogonal representation  whose dimension is $0$. 
The second ingredient  is the local epsilon factor 
\[
\epsilon_L \bigl(X_*(T)_\mathbb C - X_*(T_0)_\mathbb C, \psi \bigr).
\]
Here $\epsilon_L$ is as in Tate's article \cite{Tate}: 
the subscript $L$ indicates that epsilon factors are
formed using the conventions of Langlands rather than those 
of Deligne. It would make no difference if
we used $X^*(T)$ rather than $X_*(T)$, since we are dealing with
self-contragredient representations.

Deligne's results \cite{Del} on  local epsilon factors of virtual  orthogonal 
complex representations of $\Gamma$ are essential to our proof  
of the main theorem, as is  a result of Jacquet-Langlands  \cite{JL}  
giving an equality between certain local epsilon factors  
and certain Weil indices. 

\subsubsection{Third ingredient} \label{subsub.ThirdIngredient}

The third ingredient is the Weil index $\gamma(Q_V,\psi)$ (see \cite{Weil}) of a certain
even dimensional, nondegenerate quadratic space $(V,Q_V)$ 
over $F$. Because the dimension is even, the Weil index 
is a complex fourth root of unity.

The quadratic form $Q_V$ depends on $(G,T)$, and it lives on the $F$-vector space
$V$
defined as the unique $T$-invariant complement to $\mathfrak t: =\Lie(T)$ in
$\mathfrak g := \Lie(G)$.
Equivalently, $V$ is the unique $T_{\ssc}$-invariant complement to $\mathfrak
t_{\ssc}$
in $\mathfrak g_{\ssc}$. (As is customary, $T_{\ssc}$ denotes the preimage of $T$
in $G_{\ssc}$.)

How is the quadratic form $Q_V$ defined? Giving an explicit formula for it is
possible only over the
separable closure $\overline{F}$. So, just for a moment, we extend scalars to
$\overline{F}$.
We then have the root space decomposition 
\[
V = \bigoplus_\alpha \mathfrak g_\alpha.
\]
Even more useful to us  is the coarser decomposition 
\[
V = \bigoplus_{\pm\alpha} \mathfrak g_{\pm\alpha},
\]
where $\mathfrak g_{\pm\alpha}:=\mathfrak g_\alpha \oplus \mathfrak g_{-\alpha}$. 
Our quadratic form $Q_V$ will be defined as the direct sum of quadratic forms
$Q_{\pm\alpha}$ on
the planes $\mathfrak g_{\pm\alpha}$ over $\overline{F}$. 

The value of $Q_{\pm\alpha}$ on $v \in \mathfrak g_{\pm\alpha}$ 
is defined as follows. We decompose $v$ as the sum of $v_+ \in \mathfrak g_\alpha$
and
$v_- \in \mathfrak g_{-\alpha}$. We regard $v_+$ and $v_-$ as elements in
$\mathfrak g_{\ssc}$,
and then form their Lie bracket $[v_+,v_-] \in \mathfrak t_{\ssc}$. It is a
standard
fact that $[v_+,v_-]$ is a scalar multiple of the coroot for $\alpha$, and we take
this
scalar (in $\overline{F}$) as the definition of $Q_{\pm\alpha}(v)$. (Even when the
characteristic is $2$, the
coroot $H_\alpha$ is nonzero in $\mathfrak t_{\ssc}$, which is why we moved from 
$\mathfrak g$ to $\mathfrak g_{\ssc}$ before taking the Lie bracket.)

 In summary, we 
define $Q_{\pm\alpha}(v)$ to be the unique scalar $c \in \overline{F}$ such that 
\[
[v_+,v_-]_{\ssc}= c H_\alpha, 
\]
where the subscript $\ssc$ indicates that the Lie bracket is taken in $\mathfrak
g_{\ssc}=\mathfrak t_{\ssc} \oplus V$,
and $H_\alpha \in \mathfrak t_{\ssc}$ is the coroot for $\alpha$. 
If we replace $\alpha$ by $-\alpha$, both sides 
of this equality are replaced by their negatives, but $c$ does not change. So
$Q_{\pm\alpha}$ depends only on the
unordered pair $\{\alpha,-\alpha\}$. It is a canonically defined nondegenerate
quadratic form on the
plane $\mathfrak g_{\pm\alpha}$.  

Now $Q_{\pm\alpha}$ is defined  only over $\overline{F}$, but 
the direct sum 
\[
Q_V := \bigoplus_{\pm\alpha} Q_{\pm\alpha}
\] 
is defined over $F$, and our discussion 
of the third ingredient is complete.

\subsection{Statement of the main theorem}

For any $(G,T)$ over $F$, we want to prove the following statement, an application
of which
can be found in Kaletha's paper \cite{Kal}. 
\begin{theorem} There is an equality 
\[
e(G) \gamma(Q_V,\psi) = \epsilon_L \bigl(X_*(T)_\mathbb C - X_*(T_0)_\mathbb C,\psi
\bigr).
\]
\end{theorem}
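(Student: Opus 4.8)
The plan is to reduce the global equality to a product of local identities indexed by the $\Gamma$-orbits on the set $\{\pm\alpha\}$ of unordered pairs of roots of $T$, and then match each factor. First I would dispose of the term $e(G)$: since $e(G) = e(G_0)^{-1}$ trivially (they are inner forms, and $e$ is an inner-form comparison sign from \cite{SignChanges}), and since the right-hand side is a difference involving both $T$ and $T_0$, I would first treat the quasi-split case $G = G_0$, $T = T_0$, where the statement reads $e(G_0)\gamma(Q_{V_0},\psi) = \epsilon_L(0,\psi) = 1$, and then bootstrap to general $(G,T)$. Actually the cleaner route is to observe that both sides of the asserted equality are ``multiplicative in pieces'': writing $V = \bigoplus_{\mathcal O} V_{\mathcal O}$ as a sum over $\Gamma$-orbits $\mathcal O$ of pairs $\pm\alpha$, the quadratic form $Q_V$ decomposes compatibly as $\bigoplus_{\mathcal O} Q_{V_{\mathcal O}}$ (this sum being defined over $F$ orbit by orbit), so $\gamma(Q_V,\psi) = \prod_{\mathcal O} \gamma(Q_{V_{\mathcal O}},\psi)$; similarly $e(G)$ and the virtual representation $X_*(T)_{\mathbb C}$ each split as the analogous product/sum over these orbits, with the torus part $X_*(T)$ contributing a piece that is, up to the analogous pieces for $T_0$, built from the permutation representations on the orbits. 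So it suffices to prove the identity one $\Gamma$-orbit of pairs $\pm\alpha$ at a time.

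Next I would identify, for a single orbit $\mathcal O$, the relevant local field extension: the stabilizer of a chosen pair $\pm\alpha$ in $\Gamma$ cuts out a finite separable extension $E/F$, and inside it the stabilizer of $\alpha$ itself is an index-at-most-two subextension $E'/F$ (equal to $E$ if $\alpha$ and $-\alpha$ are not $\Gamma$-conjugate, and a quadratic subextension $E'$ with $E/E'$ generated by the element swapping $\alpha \leftrightarrow -\alpha$ otherwise — and $E=E'$ exactly when $-1$ acts as an element already in the stabilizer of $\alpha$). The plane $\mathfrak g_{\pm\alpha}$, descended to $F$, becomes the $F$-quadratic space obtained by Weil restriction $\mathrm{Res}_{E'/F}$ of a one-dimensional form whose class is governed by whether $-\alpha \in \Gamma\alpha$, i.e.\ it is a rank-two form over $F$ of the shape ``norm form of $E/E'$, scalar-extended from $E'$ to $F$,'' twisted by the scalar recording the value of $Q_{\pm\alpha}$ — which, crucially, because of the Chevalley-basis rigidity of the coroot relation $[v_+,v_-]_{\ssc} = cH_\alpha$, is a \emph{unit} up to squares, so only the $E/E'$-norm class matters. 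The piece of $X_*(T)_{\mathbb C} - X_*(T_0)_{\mathbb C}$ attached to $\mathcal O$ is correspondingly $\mathrm{Ind}_{E'}^F(\mathbf 1) - \mathrm{Ind}_{E'_0}^F(\mathbf 1)$ or the virtual induction of the nontrivial quadratic character of $E'$ cutting out $E$, minus the corresponding thing for $T_0$; this is exactly the orthogonal virtual representation whose Deligne/Langlands epsilon factor is computed by the Fröhlich–Queyrut / Jacquet–Langlands formula.

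At this point the two key inputs quoted in the introduction take over. Deligne's results in \cite{Del} let me rewrite $\epsilon_L$ of the rank-zero virtual orthogonal representation attached to $\mathcal O$ as a product of a ``Stiefel–Whitney''/second-Stiefel–Whitney-class term — which is precisely $e(G)$'s orbit-contribution, by the very definition of $e(G)$ in \cite{SignChanges} as (a sign built from) Hasse–Witt invariants of the symmetric bilinear form on $\mathfrak g$ — times the epsilon factor of the associated \emph{orthogonal} (as opposed to virtual) representation; and the Jacquet–Langlands result in \cite{JL} identifies that remaining epsilon factor with the Weil index of the quadratic form defined over $F$ by the norm-form construction above, i.e.\ with $\gamma(Q_{V_{\mathcal O}},\psi)$. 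Assembling: $\epsilon_L(\mathcal O\text{-piece},\psi) = (e(G)\text{-piece})\cdot\gamma(Q_{V_{\mathcal O}},\psi)$, and taking the product over $\mathcal O$ gives the theorem.

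I expect the main obstacle to be the precise bookkeeping at a single orbit when $\alpha$ and $-\alpha$ \emph{are} $\Gamma$-conjugate but the element realizing the conjugation has order larger than $2$ (the ``$E \ne E'$, and $E/E'$ inseparable-looking in characteristic $2$'' subtleties), together with making sure the scalar $c = Q_{\pm\alpha}(v)$ really is a norm from $E$ (equivalently, that the obvious $F$-structure on $\mathfrak g_{\pm\alpha}$ has the quadratic form in the expected Witt class) — this is exactly where one must exploit that $H_\alpha \ne 0$ even in characteristic $2$, which is why the construction passes through $\mathfrak g_{\ssc}$. The characteristic-$2$ and wildly-ramified cases are where Deligne's and Jacquet–Langlands' theorems must be invoked in their full strength rather than in their classical tame forms; modulo that, the reduction to one orbit and the matching of the three ingredients term by term is the whole argument.
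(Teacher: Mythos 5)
Your orbit-by-orbit decomposition works on the $V$-side --- $Q_V$ genuinely splits as an $F$-rational orthogonal direct sum over $\Gamma$-orbits of pairs $\{\pm\alpha\}$, so $\gamma(Q_V,\psi)$ factors as you say --- but it fails on the torus side, and this is the fatal gap. The virtual representation $X_*(T)_\mathbb C - X_*(T_0)_\mathbb C$ does \emph{not} decompose as a sum of orbit-contributions of the shape $\Ind_{E'}^F\mathbf 1$ or $\Ind_{E}^F\chi_{E'/E}$: the dimension of $X_*(T)_\mathbb C$ is the rank, while the number of root pairs grows quadratically, so an orbit-sum of permutation-type inductions over pairs is far too large, and the excess fails to cancel against the corresponding excess for $T_0$. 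Concretely, take $G=\mathrm{PGL}_4$ over a $p$-adic field and let $T$ come from a cyclic quartic extension $K/F$ with character $\chi$ of order $4$. The six pairs break into a $\Gamma$-orbit of size $4$ (with $E=E'=K$) and one of size $2$ (with $E=L$ the quadratic subfield, $E'=K$), so your orbit recipe gives $\Ind_K^F\mathbf 1 + \Ind_L^F\chi_{K/L} = \mathbf 1 + 2\chi + \chi^2 + 2\chi^3$, against $6\cdot\mathbf 1$ for $T_0$; but $X_*(T)_\mathbb C = \chi + \chi^2 + \chi^3$ against $3\cdot\mathbf 1$. The two $0$-dimensional virtual representations differ by $\chi + \chi^3 - 2\cdot\mathbf 1$, which is nonzero, so the asserted orbit-matching is false already for $A_3$. (Your $A_1$ and $A_2$ sanity checks happen to work because there the number of pairs exceeds the rank by exactly one, and the excess is $\mathbf 1$ on both sides.)

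The structural reason the paper avoids this trap is that it never tries to express $X_*(T)_\mathbb C$ in terms of root orbits. Instead it builds a $G$-invariant quadratic form $Q_G$ on all of $\mathfrak g$ (using Vinberg's intermediate group $(\mathbb G_{\ssc}\times\mathbb T_{\ssc})/Z$ to make $Q_G$ integral and nondegenerate), so that $Q_G$ restricts to $Q_T$ on $\mathfrak t$ and, up to the $\ell$-correction on short roots, to $Q_V$ on $V$. Since $\gamma(Q_G,\psi)$ is the same for every maximal torus, the problem becomes comparing $\gamma(Q_T,\psi)$ with $\epsilon_L(X_*(T)_\mathbb C,\psi)$, and \emph{that} is exactly what Fr\"ohlich's theorem (Proposition \ref{prop.FrohExt}) handles: the Hasse--Witt invariant of a cocycle-twist of a quadratic form equals the Stiefel--Whitney class of the orthogonal representation, up to a spinor-norm correction (which the paper computes in Lemmas \ref{lem.SpinNormInvertible} and \ref{lem.SpinNormZero}). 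You correctly guessed that Fr\"ohlich, Deligne, and Jacquet--Langlands are the three load-bearing inputs, and your preliminary reductions (multiplicativity, Weil restriction, inner-form comparison via $e(G)$) match the paper's; but without some analogue of $Q_G$ tying $\mathfrak t$ to $V$, the torus-side bookkeeping cannot be done one orbit at a time. Your remaining unverified claims --- that the orbit form $Q_{V_{\mathcal O}}$ is a norm form up to squares, and that $e(G)$ admits a matching orbit decomposition --- are additional gaps, but the $X_*(T)$-decomposition is the one that makes the approach unworkable as stated.
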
 

 \subsection{Organization of the paper}

In what follows we first sketch a 
proof of the main theorem, and then fill in the details. The body 
of the paper is not entirely self-contained, and the reader will need 
to be familiar with the material in the appendices, where some 
of the key notation is introduced. 

The appendices are largely review, so many proofs are 
omitted. However Proposition \ref{thm.CompEpWeil}, which gives 
a convenient method to recognize when Weil 
indices and local epsilon factors of orthogonal representations 
are equal, may be new. The same is true of Proposition 
\ref{prop.FrohExt} for fields of characteristic $2$. For fields of 
characteristic different from $2$, Proposition \ref{prop.FrohExt} 
is a result of Fr\"ohlich \cite{Froh}.

\section{Sketch of the proof of the main theorem}

\subsection{Reduction steps} 

We refer to the statement in the theorem as $P(G,T)$ in the reduction steps that
follow.

\subsubsection{Reduction 1} 

The statement $P(G,T)$ is true if and only if the statement $P(G_{\ad},T_{\ad})$ is
true.
Indeed, each of three quantities appearing in the statement remains unchanged when
we pass from
$(G,T)$ to $(G_{\ad},T_{\ad})$. 
(As is customary $T_{\ad}$ is the maximal torus in $G_{\ad}$ 
whose preimage in $G$ is $T$.)

\subsubsection{Reduction 2}

If $P(G_i,T_i)$ is true for $i=1,2$, then $P(G_1 \times G_2,T_1 \times T_2)$ is
true.
Indeed, each of the three quantities appearing in the statement is multiplicative
in the
pair $(G,T)$. 

\subsubsection{Reduction 3}

Let $E/F$ be a finite separable extension, and use $\psi \circ \tr_{E/F}$ as
additive character on $E$.
Then, for any pair $(G,T)$ over $E$,  statement $P(G,T)$ is true if and only if  
statement $P(R_{E/F}G,R_{E/F}T)$ is true. Here $R_{E/F}$ denotes Weil restriction
of scalars.
Indeed, each of the three relevant quantities is preserved by Weil restriction of
scalars. (We are using
that local epsilon factors are inductive for virtual representations of degree
$0$.)

\subsubsection{Reduction 4}

 It is well-known that $T$ transfers to 
a maximal torus $T'$ in the quasi-split inner form $G_0$. Then, for any choice of
$T'$ (which is only well-defined up to
stable conjugacy), the statement $P(G,T)$ is true if and only if the statement
$P(G_0,T')$ is true.

This reduction is a bit more complicated than the others. 
The local epsilon factor is of course the same for 
$(G,T)$ and $(G_0,T')$, and moreover $e(G_0)=1$. So we need to show that 
\[
e(G)  =  \gamma(Q_{V'},\psi) \gamma(Q_V,\psi) ^{-1}.
\]
This statement is similar to a result of Gan \cite{Gan}
relating $e(G)$ to  the Killing form. 

Now the difference between $T$ and $T'$ is measured by an invariant $t \in
H^1(F,T)$.
By Corollary \ref{cor.EpTor} we just need to show that $e(G) \in \Br_2(F) = \{ \pm
1 \}$ is equal to
the image of $t$ under the map 
\[
\partial_U : H^1(F,T) \to \Br_2(F)
\]
arising from the fppf short exact sequence 
\[
1 \to \mu_2 \to U \to T \to 1
\]
corresponding to the element $\lambda_V \in X^*(T)/2X^*(T)$ defined in the
discussion leading
up to Corollary \ref{cor.EpTor}. It is evident that $\lambda_V$ works out to the
sum of the positive roots
(for any choice of positive system), and therefore gives rise to the same
connecting map as the one
used to define $e(G)$. 

\subsection{Using the reduction steps to get down to the essential case} 

How do we use these reduction steps? Using the first one, we may assume that $G$ is
adjoint.
Then $G$ is a product of $F$-simple groups. By the second reduction we may now
assume
that $G$ is $F$-simple. Then $(G,T)$ is of the form $(R_{E/F}H,R_{E/F}T_H)$ for
some $E/F$ and some
$(H,T_H)$ over $E$ with $H$ absolutely simple. By the third reduction we may now
assume that $G$
is an absolutely simple group. Finally, the fourth reduction allows us to assume
that $G$ is also quasi-split
over $F$.

Because $G$ is quasi-split, $e(G)=1$ and the main theorem 
reduces to the statement that 
\begin{equation}
\gamma(Q_V,\psi) = 
\epsilon_L \bigl(X_*(T)_\mathbb C - X_*(T_0)_\mathbb C,\psi \bigr).
\end{equation}
To prove this it suffices to check 
 that the number 
\begin{equation}\label{eq.MTReform}
\gamma(Q_V,\psi)\epsilon_L(X_*(T)_\mathbb C,\psi)^{-1}
\end{equation}
is independent of the maximal torus $T$ in $G$. Indeed, 
when $T=T_0$ the number \eqref{eq.MTReform} reduces to 
$\epsilon_L(X_*(T)_\mathbb C,\psi)^{-1}$, because the Weil 
index of $V_0$ is automatically $1$. Here $V_0$ is the analog 
of $V$ for $T_0$. 
 (We have 
$V_0 = V_+ \oplus V_-$, where $V_+$ comes from $B_0$-positive roots and $V_-$ comes
from
$B_0$-negative roots. Both $V_+$ and $V_-$ are defined over $F$, and the quadratic
form $Q_{V_0}$
vanishes identically on both of them. So $V_0$ is a direct sum 
of hyperbolic planes, and  its Weil index is  $1$.)

\subsection{Rough sketch} 

We finish this section by giving a rough sketch of the rest of the proof. 
In subsequent sections we will fill in  all the details. 
We are now working with a quasi-split absolutely simple  adjoint group $G$.  
For any two maximal $F$-tori $T$, $\tilde T$ in $G$ we must  show that 
\[
\gamma(Q_{\tilde V},\psi)\gamma(Q_V,\psi) ^{-1} =  
\epsilon_L(X_*(\tilde T)_\mathbb C - X_*(T)_\mathbb C,\psi).
\]
Here, of course, $ \mathfrak g = \tilde{\mathfrak t} \oplus \tilde V$ is the analog
for $\tilde T$ of the decomposition
$ \mathfrak g = \mathfrak t \oplus V$.

How will we do this? The details depend on the type of the root system $R$ of $G$.
Types $A,D,E$
(the simply laced ones) are the easiest to handle. Types $B,C,F_4$ will require
extra effort in characteristic $2$, and
$G_2$ will require extra effort in characteristic $3$, but in other characteristics
the non-simply laced cases
 can be treated by a slight elaboration of the 
method used to handle types $A,D,E$.   

Curiously enough the group $E_8$ is easiest of all: besides being simply laced, it
has the property that
the adjoint group is also simply connected. So we begin our sketch by considering
the case of $E_8$.
Then there is a canonical $G$-invariant nondegenerate quadratic 
form $Q_G$ on $\mathfrak g$ having the property that, 
for every maximal torus $T$ in $G$, the restriction of $Q_G$ to $V$ is equal to
$Q_V$. We then have
\[
\gamma(Q_G|_\mathfrak t,\psi)\gamma(Q_V,\psi) = \gamma(Q_G,\psi) = 
\gamma(Q_G|_{\tilde{\mathfrak t}},\psi)\gamma(Q_{\tilde V},\psi),
\]
from which we conclude that 
\[
\gamma(Q_{\tilde V},\psi)\gamma(Q_V,\psi) ^{-1} = \gamma(Q_G|_\mathfrak t,\psi) 
\gamma(Q_G|_{\tilde {\mathfrak t}},\psi)^{-1} .
\]
To complete the argument it remains only to show that  
\[ \gamma(Q_G|_\mathfrak t,\psi) 
\gamma(Q_G|_{\tilde{\mathfrak t}},\psi)^{-1} = 
\epsilon_L(X_*(\tilde T)_\mathbb C -
X_*(T)_\mathbb C,\psi).
\]
For this  (see Corollary \ref{cor.Wep}) we need to show that 
\[
HW(Q_G|_{\tilde{\mathfrak t}},Q_G|_\mathfrak t) = 
\overline{SW} \bigl(X_*(\tilde T)_\mathbb C - X_*(T)_\mathbb C).  
\]
A  general result of Fr\"ohlich (see subsection \ref{sub.FrohReview}) 
can be used to show that 
 the Hasse-Witt invariant on the left is equal to 
the Stiefel-Whitney class on the right, plus a correction 
term whose vanishing we must check. 
(In fact we will see that the correction term vanishes in all 
simply laced cases. In the non-simply laced cases it need not 
vanish, but it is canceled by another correction term
coming from the fact that the restriction of $Q_G$ to $V$ is 
no longer equal to $Q_V$.)

For other root systems there is no quadratic form $Q_G$ as well-behaved as the one
for $E_8$.
Even in type $A_n$, we don't really want to work with the adjoint group $PGL_n$,
and the simply connected
form $SL_n$ isn't much better. We could use $GL_n$, but we prefer a systematic 
construction that works perfectly for all simply laced groups and makes a
significant improvement even in
the non-simply laced case. 

As a result of our reduction process, we happened to end up with a group of adjoint
type. But, as we saw in the first reduction step, for general $(G,T)$ the truth of
$P(G,T)$ is the same as
that of $P(G_{\ad},T_{\ad})$. Since every maximal torus in $G_{\ad}$ is of the form
$T_{\ad}$ for
a unique maximal torus $T$ of $G$, we are free to replace the adjoint form by one
that is more convenient. Our choice
will be one that has been used in various settings, e.g.~ Vinberg's work \cite{V}
on reductive monoids.

So for the remainder of this sketch $G$ will no longer be adjoint. It will still be
quasi-split with
irreducible root system, and, because Vinberg's construction yields groups with
especially favorable
properties,  $\mathfrak g$ will admit a canonical $G$-invariant 
quadratic form $Q_G$ that is 
often nondegenerate. It will turn out that $Q_G$ is degenerate only 
for $B,C,F_4$ in characteristic $2$ and $G_2$ in 
characteristic $3$. 

In fact, now that we have moved from the adjoint form to 
a better behaved one, the approach sketched for $E_8$  
continues to work for all 
groups of type $A,D,E$. For the other types, in which there are long 
roots and short roots, it is no longer 
true that the restriction of $Q_G$ to $V$ coincides with $Q_V$. In 
fact, $V$ breaks up as a direct sum of 
two subspaces, one coming from the short roots, and one from the 
long roots. On the part coming from the long roots, 
$Q_G$ and $Q_V$  agree. On the part coming from the 
short roots, $Q_G$  agrees with $\ell Q_V$, 
where $\ell$ is $2$ in types $B,C,F_4$ and is $3$ in type $G_2$. 
As long as the characteristic of $F$ is 
different from $\ell$, a slight modification of the approach 
used for $A,D,E$ still works. 

When the characteristic of $F$ is equal to $\ell$, the 
quadratic form $Q_G$ is degenerate,
and the representation of $G$ on $\mathfrak g$ is reducible. 
In fact there is a canonical short exact sequence 
\[
0 \to \mathfrak g'' \to \mathfrak g \to \mathfrak g' \to 0 
\]
of $G$-modules, 
and we can achieve our goal by constructing nondegenerate quadratic forms
$Q_{\mathfrak g'}$,
$Q_{\mathfrak g''}$ on $\mathfrak g'$, $\mathfrak g''$ respectively, and then 
working with $(\mathfrak g',Q_{\mathfrak g'})$ and $(\mathfrak g'',Q_{\mathfrak
g''})$ rather than
$(\mathfrak g,Q_G)$. 

This completes our sketch. Now we resume giving a detailed proof. We start with
some preliminary
constructions over $\mathbb Z$. 

\section{Discussion of the pinned group $\mathbb G$ over $\mathbb Z$} 

As we have seen, it is enough to treat quasi-split adjoint groups with irreducible
root system. Such a group is
obtained by twisting a split adjoint group using a homomorphism  from $\Gamma$ to 
the group of automorphisms of the Dynkin diagram. But, as we mentioned above, we
are not obliged
to stick with adjoint groups, and in fact it is advantageous to use Vinberg's
construction,
which produces groups $G$ that are symmetrically located between $G_{\ssc}$ and
$G_{\ad}$.
 We also need to construct 
the relevant quadratic forms, and for this it is best to start with 
pinned groups over $\mathbb Z$. Only later we will extend scalars to $F$ and make
an outer twist.

\subsection{Definition of the group $\mathbb G$}

We begin with  a pinned  group 
\[
(\mathbb G_{\ssc},\mathbb B_{\ssc},\mathbb T_{\ssc},\{\eta_\alpha\})
\] 
over $\mathbb Z$ with $\mathbb G_{\ssc}$  semisimple 
and simply connected. We use the same system of notation as in Appendix
\ref{app.StQuadForm}, so the
reader should at least glance at that appendix before continuing to read this
section.

We 
assume that the root system $R$ of $\mathbb T_{\ssc}$ in $\mathbb G_{\ssc}$ is
irreducible. To $R$
is associated $\ell \in \{1,2,3\}$ (see Appendix \ref{app.StQuadForm}).

Consider the connected reductive group $\mathbb G$ over $\mathbb Z$ obtained by
putting
\[
\mathbb G := (\mathbb G_{\ssc} \times \mathbb T_{\ssc})/Z(\mathbb G_{\ssc}), 
\]
where $Z(\mathbb G_{\ssc})$ is the center of $\mathbb G_{\ssc}$, embedded in 
$(\mathbb G_{\ssc} \times \mathbb T_{\ssc})$ via $z \mapsto (z,z)$. 
We will also need the maximal torus $\mathbb T = (\mathbb T_{\ssc} \times \mathbb
T_{\ssc})/Z(\mathbb G_{\ssc})$.
The group $\mathbb G$  is useful for our purposes because, 
as we will see, there is an especially well-behaved quadratic form $Q_{\mathbb G}$
on the
free abelian group $\Lie(\mathbb G)$.

Observe that 
\begin{align} \label{X_*(T)} 
&X_*(\mathbb T)=\{(x_1,x_2) \in X_*(\mathbb T_{\ad}) \times
X_*(\mathbb T_{\ad})\,:\, x_1-x_2 \in X_*(\mathbb T_{\ssc})\},  \\ 
\label{X^*(T)} 
&X^*(\mathbb T)=\{(x_1,x_2) \in X^*(\mathbb T_{\ssc}) \times
X^*(\mathbb T_{\ssc})\,:\, x_1+x_2 \in X^*(\mathbb T_{\ad})\},
\end{align} 
where $\mathbb T_{\ad}$ is the maximal torus $\mathbb T_{\ssc}/Z(\mathbb G_{\ssc})$
in
the adjoint group $\mathbb G_{\ad}$ of $\mathbb G$. Here, as usual, we are using 
$\mathbb T_{\ssc} \to \mathbb T_{\ad}$ to identify the root lattice $X^*(\mathbb
T_{\ad})$ with a
subgroup of finite index in the weight lattice $X^*(\mathbb T_{\ssc})$, and
similarly for the coroot
lattice inside the coweight lattice. 

We also put
$\mathbb B:=(\mathbb B_{\ssc} \times 
\mathbb T_{\ssc})/Z(\mathbb G_{\ssc})$, a Borel
subgroup of $\mathbb G$.

\subsection{Definition of $Q_{\mathbb T}$} 

As in Appendix \ref{app.StQuadForm} we consider the $\mathbb Q$-vector space  
\[
\mathfrak a := X_*(\mathbb T_{\ssc})_{\mathbb Q} = X_*(\mathbb T_{\ad})_{\mathbb
Q},
\]
and the quadratic form $Q_1$ on it. 
We define a canonical quadratic form $Q_\mathbb T: \mathfrak a \times \mathfrak a
\to \mathbb Q$ by
putting
\[
Q_\mathbb T(x_1,x_2) := Q_1(x_1) - Q_1(x_2).
\] 
We write $B_\mathbb T$ for the symmetric bilinear form obtained by polarizing
$Q_\mathbb T$. So
$B_\mathbb T(x,x) = 2 Q_\mathbb T (x)$ and $Q_\mathbb T(x+y) = Q_\mathbb T(x) 
+ Q_\mathbb T(y) + B_\mathbb T(x,y)$.  We also view $B_\mathbb T$ 
 as an isomorphism from $\mathfrak a \times \mathfrak a$  
to its linear dual.  

The form $Q_\mathbb T$ is invariant 
under the natural action of $\Omega \times \Omega$ on $\mathfrak a \times \mathfrak
a$,
because $Q_1$ is invariant under $\Omega$. (See Appendix \ref{app.StQuadForm} for
the definition of
$\Omega = W \rtimes \Omega_0$.)

\subsection{Integrality properties of $Q_\mathbb T$}  

We see from equation
\eqref{X_*(T)} that $\Lambda:=X_*(\mathbb T)$ is a lattice in the  vector space
$\mathfrak a \times \mathfrak a$. We need to understand the integrality
properties of $Q_\mathbb T$ relative to  $\Lambda$. We denote by
$\Lambda^{\perp}$ the lattice perpendicular to $\Lambda$. By definition,
we have 
\[
\Lambda^\perp=\{ x \in \mathfrak a \times \mathfrak a : B_\mathbb T(x,x') \in
\mathbb Z \quad \forall \, x' \in \Lambda  \}.
\] Now $X^*(\mathbb T)$ is a lattice in
$\mathfrak a^* \times \mathfrak a^*$, and it follows immediately from the
definitions that
the isomorphism $B_\mathbb T$ maps $\Lambda^\perp$ onto $X^*(\mathbb T)$. 

\begin{lemma}\label{lem.QTint} The following statements hold. 
\begin{enumerate}
\item $Q_\mathbb T$ takes integral values on the lattice $\Lambda$. 
\item $\ell Q_\mathbb T$ takes integral values on the lattice $\Lambda^\perp$. 
\end{enumerate}
\end{lemma}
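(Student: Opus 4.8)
The plan is to reduce both assertions to elementary integrality properties of $Q_1$ and its polarization $B_1$ on the coroot and coweight lattices inside $\mathfrak a$, and, dually, on the weight and root lattices inside $\mathfrak a^*$. I shall use the facts about $Q_1$ recorded in Appendix \ref{app.StQuadForm}: $Q_1$ is $W$-invariant, and $n_\alpha := Q_1(H_\alpha)$ equals $1$ for $\alpha$ long and $\ell$ for $\alpha$ short. From $W$-invariance (applied to $s_\alpha$) one gets $B_1(H_\alpha,\,\cdot\,) = n_\alpha\langle\,\cdot\,,\alpha\rangle$ on $\mathfrak a$, hence, dually, $B_1^{-1}\alpha = n_\alpha^{-1}H_\alpha$ on $\mathfrak a^*$. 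Note also that $B_\mathbb T$ --- both as a bilinear form on $\mathfrak a \times \mathfrak a$ and as the isomorphism onto $\mathfrak a^* \times \mathfrak a^*$ it induces --- equals $B_1 \oplus (-B_1)$, so $B_\mathbb T^{-1} = B_1^{-1}\oplus(-B_1^{-1})$.

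For part (1), let $(x_1, x_2) \in \Lambda = X_*(\mathbb T)$. By \eqref{X_*(T)}, both $x_i$ lie in the coweight lattice $X_*(\mathbb T_{\ad})$ and $h := x_1 - x_2$ lies in the coroot lattice $X_*(\mathbb T_{\ssc})$. Then $Q_\mathbb T(x_1,x_2) = Q_1(x_1) - Q_1(x_1 - h) = B_1(x_1, h) - Q_1(h)$. The first term is integral: expanding $h$ in coroots $H_\alpha$ reduces this to $B_1(x_1, H_\alpha) = n_\alpha\langle x_1, \alpha\rangle \in \mathbb Z$, using $\langle x_1, \alpha\rangle \in \mathbb Z$ for a coweight $x_1$. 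The second term is integral: expanding $Q_1(h)$ in the $H_\alpha$ gives diagonal contributions $Q_1(H_\alpha) \in \{1,\ell\}$ and off-diagonal contributions $B_1(H_\alpha, H_\beta) = n_\alpha\langle H_\beta, \alpha\rangle$, an integer since $\langle H_\beta,\alpha\rangle$ is a Cartan integer. This proves (1).

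For part (2), I use that $B_\mathbb T$ carries $\Lambda^\perp$ isomorphically onto $X^*(\mathbb T)$. Because $B_\mathbb T^{-1} = B_1^{-1}\oplus(-B_1^{-1})$, transporting $Q_\mathbb T$ through this isomorphism yields the form $(\chi_1,\chi_2)\mapsto \tilde Q_1(\chi_1) - \tilde Q_1(\chi_2)$ on $X^*(\mathbb T)$, where $\tilde Q_1(\chi) := \tfrac12\langle\chi, B_1^{-1}\chi\rangle$ is the quadratic form dual to $Q_1$, with polarization $\tilde B_1(\chi,\chi') := \langle\chi, B_1^{-1}\chi'\rangle$. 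So (2) becomes: $\ell\bigl(\tilde Q_1(\chi_1) - \tilde Q_1(\chi_2)\bigr) \in \mathbb Z$ for $(\chi_1,\chi_2) \in X^*(\mathbb T)$. By \eqref{X^*(T)}, each $\chi_i$ lies in the weight lattice $X^*(\mathbb T_{\ssc})$ and $g := \chi_1 + \chi_2$ lies in the root lattice $X^*(\mathbb T_{\ad})$; substituting $\chi_2 = g - \chi_1$ gives $\tilde Q_1(\chi_1) - \tilde Q_1(\chi_2) = \tilde B_1(g,\chi_1) - \tilde Q_1(g)$. Now $\tilde B_1(\alpha, \chi) = n_\alpha^{-1}\langle\chi, H_\alpha\rangle$ and $\tilde Q_1(\alpha) = n_\alpha^{-1}$; since $\langle\chi, H_\alpha\rangle \in \mathbb Z$ for a weight $\chi$, since $\langle\alpha, H_\beta\rangle$ is a Cartan integer, and since $n_\alpha \in \{1,\ell\}$ divides $\ell$, multiplication by $\ell$ clears all denominators after expanding $g$ in simple roots. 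This proves (2).

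There is no deep obstacle --- the work is bookkeeping with the four lattices and the ratio $\ell$ between long and short roots --- but two points need care. The first is the passage to the dual side in part (2): one must track the sign in $B_\mathbb T^{-1} = B_1^{-1}\oplus(-B_1^{-1})$, which is precisely why $Q_\mathbb T$ restricted to $\Lambda^\perp$ transports to a \emph{difference} $\tilde Q_1(\chi_1) - \tilde Q_1(\chi_2)$ rather than a sum. The second, which is the real content of (2), is that $\ell$ is exactly the right multiplier: the dual form $\tilde Q_1$ takes the value $n_\alpha^{-1} \in \{1, \ell^{-1}\}$ on roots, so its only denominators are the $\ell^{-1}$ coming from short roots --- the mirror image of the fact that $Q_1$ has no denominators on coroots but takes the value $\ell$ on long coroots. (If desired, part (1) may be packaged by observing that $Q_1 \bmod \mathbb Z$ descends to $X_*(\mathbb T_{\ad})/X_*(\mathbb T_{\ssc})$, so $Q_\mathbb T(x_1,x_2)$ depends only on the common class of $x_1$ and $x_2$ and hence vanishes mod $\mathbb Z$ on $\Lambda$; part (2) is then the $B_\mathbb T$-dual of this statement.)
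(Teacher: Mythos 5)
Your proof is correct and takes essentially the same route as the paper: the paper simply cites Lemma \ref{lem.intQ} (which packages the integrality of $Q_1$ and $B_1$ on the relevant coroot/coweight/root/weight lattices), while you reprove its content inline from the values $n_\alpha = Q_1(H_\alpha) \in \{1,\ell\}$ and Cartan integers, and the substitution trick $Q_\mathbb T(x_1,x_2) = B_1(x_1,h) - Q_1(h)$ with $h = x_1 - x_2$ (resp.\ its dual for part (2)) is the same implicit reduction the paper uses.
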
 

\begin{proof} This follows easily from  Lemma \ref{lem.intQ}. 
\end{proof}

Now, as we already mentioned, the quadratic form $Q_\mathbb T$ is invariant under
$\Omega \times \Omega$.
The lattice $\Lambda= X_*(\mathbb T)$, however, 
is invariant only under $W\times W$ and the diagonal copy of 
$\Omega_0$. In
any event, it  makes sense to say that the quadratic form $Q_\mathbb T$ 
on $\Lambda$ is invariant under $(W \times W) \rtimes \Omega_0$. 

\subsection{Values of  $Q_\mathbb T$ on roots and coroots}
 The root lattice inside $X^*(\mathbb T)$ 
can of course be identified with  $X^*(\mathbb T_{\ad})$. The  
root $\alpha$ then becomes the pair $(\alpha,0)$ in the description \eqref{X^*(T)}
of $X^*(\mathbb T)$.
The situation for the coroot lattice is parallel, and the coroot $\alpha^\vee$  
becomes the pair $(\alpha^\vee,0)$ in the description \eqref{X_*(T)} of
$X_*(\mathbb T)$.

\begin{lemma}\label{lem.RootStuff}
There are equalities 
\begin{equation*}
Q_\mathbb T(\alpha^\vee)=\ell(\alpha^\vee), \quad 
 (\ell Q_\mathbb T)(\alpha) = \ell(\alpha). 
\end{equation*}
\end{lemma}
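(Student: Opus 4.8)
The plan is to reduce both equalities to properties of the standard form $Q_1$ on $\mathfrak a$ recorded in Appendix \ref{app.StQuadForm}, using the explicit descriptions of $\alpha^\vee$ and $\alpha$ as the pairs $(\alpha^\vee,0)\in X_*(\mathbb T)$ and $(\alpha,0)\in X^*(\mathbb T)$ given just above the statement. The coroot identity is the immediate one: since $\alpha^\vee=(\alpha^\vee,0)$ lies in $\Lambda=X_*(\mathbb T)$, the definition $Q_\mathbb T(x_1,x_2)=Q_1(x_1)-Q_1(x_2)$ gives
\[
Q_\mathbb T(\alpha^\vee)=Q_1(\alpha^\vee)-Q_1(0)=Q_1(\alpha^\vee),
\]
and $Q_1(\alpha^\vee)=\ell(\alpha^\vee)$ by the normalization of $Q_1$ (this is either the defining property of $Q_1$ or an immediate consequence of it, according to how Appendix \ref{app.StQuadForm} is set up).

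For the root identity I would pass to the linear dual. Recall that $B_\mathbb T$ sends $(x_1,x_2)$ to $(B_1x_1,-B_1x_2)$, where $B_1$ is the polarization of $Q_1$; hence $B_\mathbb T$ identifies $\Lambda^\perp$ with $X^*(\mathbb T)$ and carries $(B_1^{-1}\alpha,0)\in\Lambda^\perp$ to $\alpha=(\alpha,0)\in X^*(\mathbb T)$. Therefore the value of the (integral, by Lemma \ref{lem.QTint}) form $\ell Q_\mathbb T$ at $\alpha$ is
\[
(\ell Q_\mathbb T)(\alpha)=\ell\,Q_\mathbb T(B_1^{-1}\alpha,0)=\ell\,Q_1(B_1^{-1}\alpha)=\ell\,Q_1^*(\alpha),
\]
where $Q_1^*$ is the quadratic form on $\mathfrak a^*$ dual to $Q_1$. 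The one genuine input now is the standard relation between the squared lengths of a root and its coroot in dual forms: it says precisely that $Q_1^*(\alpha)=1/Q_1(\alpha^\vee)=1/\ell(\alpha^\vee)$. Combining this with $\ell(\alpha)\,\ell(\alpha^\vee)=\ell$ (one factor is $1$ and the other is $\ell$, since passing to coroots interchanges long and short roots) yields $(\ell Q_\mathbb T)(\alpha)=\ell/\ell(\alpha^\vee)=\ell(\alpha)$, as wanted.

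The only point needing care is the bookkeeping around duality: that ``$(\ell Q_\mathbb T)(\alpha)$'' refers to the form induced on $X^*(\mathbb T)\cong\Lambda^\perp$ rather than to something evaluated inside $\Lambda$, and that the transpose $B_\mathbb T=(B_1,-B_1)$ is handled so that the dualization $Q_1^*$ of $Q_1$ appears with no spurious constant. Once the element of $\mathfrak a\times\mathfrak a$ representing $\alpha$ is correctly pinned down as $(B_1^{-1}\alpha,0)$, both equalities are formal consequences of facts about $Q_1$ already available from Appendix \ref{app.StQuadForm}, so I do not anticipate any serious obstacle.
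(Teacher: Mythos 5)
Your proof is correct and follows essentially the same route as the paper: pin down $\alpha^\vee=(\alpha^\vee,0)\in X_*(\mathbb T)$ and $\alpha=(\alpha,0)\in X^*(\mathbb T)\cong\Lambda^\perp$ via $B_\mathbb T=(B_1,-B_1)$, then reduce everything to $Q_1$. The only difference is that where the paper simply invokes Lemma \ref{lem.ellalph}(2) (which states $\ell Q_1(\alpha)=\ell(\alpha)$ under the $B_1$-identification) to finish the second equality, you re-derive that fact from the duality relation $Q_1^*(\alpha)=1/Q_1(\alpha^\vee)$ together with equation \eqref{eq.rtcrt}, i.e. $\ell(\alpha)\ell(\alpha^\vee)=\ell$.
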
 

\begin{proof}
We have $Q_\mathbb T(\alpha^\vee) = Q_1(\alpha^\vee) = \ell(\alpha^\vee)$.
Similarly,
 $(\ell Q_\mathbb T)(\alpha) = (\ell Q_1)(\alpha) = \ell(\alpha)$. 
In the first chain of equalities, we used the definition of $\ell(\alpha^\vee)$,
and in the second we used
 Lemma \ref{lem.ellalph}(2). 
\end{proof}

\begin{corollary}\label{cor.QTalpha}
 When $\ell = 1$, we have  $\alpha^\vee = \alpha$ and 
$Q_{\mathbb T}(\alpha^\vee) = Q_\mathbb T(\alpha) = 1$. 
When $\ell = 2,3$,  there are two root lengths, and we have 
\begin{itemize} 
\item 
$Q_{\mathbb T}(\alpha^\vee) = 1$ when $\alpha$ is long, and 
\item 
$(\ell Q_{\mathbb T})(\alpha) = 1$ when $\alpha$ is short. 
\end{itemize}
\end{corollary}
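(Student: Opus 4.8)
The plan is to obtain Corollary \ref{cor.QTalpha} as an immediate unpacking of Lemma \ref{lem.RootStuff}: that lemma already gives $Q_\mathbb{T}(\alpha^\vee) = \ell(\alpha^\vee)$ and $(\ell Q_\mathbb{T})(\alpha) = \ell(\alpha)$, so the only remaining task is to evaluate the function $\ell(\cdot)$ on a root, resp.\ coroot, of each admissible length, using the normalization of $Q_1$ and the definition of $\ell(\cdot)$ fixed in Appendix \ref{app.StQuadForm}.

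First I would dispose of the simply laced case $\ell = 1$. Here $R$ has a single root length, the identification $\mathfrak a \cong \mathfrak a^*$ coming from $Q_1$ carries $\alpha^\vee$ to $\alpha$, and $Q_1$ is normalized to take the value $1$ on every coroot; hence $\ell(\alpha^\vee) = \ell(\alpha) = 1$. Since $\ell = 1$ we have $\ell Q_\mathbb{T} = Q_\mathbb{T}$, so Lemma \ref{lem.RootStuff} gives $Q_\mathbb{T}(\alpha^\vee) = Q_\mathbb{T}(\alpha) = 1$, which together with $\alpha^\vee = \alpha$ is the first assertion.

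Next I would treat $\ell = 2$ (types $B$, $C$, $F_4$) and $\ell = 3$ (type $G_2$), where $R$ has long roots and short roots. The conventions of Appendix \ref{app.StQuadForm} are arranged precisely so that $\ell(\cdot)$ equals $1$ on the short objects: $\ell(\alpha) = 1$ when $\alpha$ is short, and $\ell(\alpha^\vee) = 1$ when $\alpha^\vee$ is a short coroot, i.e.\ when $\alpha$ is long (recall that a long root has a short coroot). Substituting these values back into Lemma \ref{lem.RootStuff} yields $Q_\mathbb{T}(\alpha^\vee) = 1$ for $\alpha$ long and $(\ell Q_\mathbb{T})(\alpha) = 1$ for $\alpha$ short, as required.

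No estimates, constructions, or case-by-case Lie-theoretic computations beyond those already packaged in Lemma \ref{lem.RootStuff} and the appendix are needed, so the only — quite modest — obstacle is bookkeeping: one must keep straight which normalization of $Q_1$ is in force and use the fact that long roots have short coroots (and conversely), so that the statements phrased in terms of the length of $\alpha$ are matched with the correct values of $\ell(\alpha)$ and $\ell(\alpha^\vee)$.
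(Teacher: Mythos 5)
Your proof is correct and follows essentially the same route as the paper: reduce everything to Lemma \ref{lem.RootStuff}, evaluate $\ell(\alpha^\vee)$ and $\ell(\alpha)$ using the fact that $Q_1$ is normalized to take the value $1$ on short coroots (and that long roots have short coroots), and in the simply laced case invoke the identification $\alpha^\vee = \alpha$ (the paper cites Lemma \ref{lem.ellalph}(1) for this, which is precisely the fact you use informally).
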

\begin{proof}
It follows from Lemma \ref{lem.ellalph}(1) that $\alpha^\vee=
\alpha$ when $\ell=1$. The rest follows from Lemma 
\ref{lem.RootStuff}. 
\end{proof}

\subsection{The quadratic form $Q_\mathbb G$ on $\Lie(\mathbb G)$}

In Appendix  \ref{app.StQuadForm}     
we studied a certain quadratic form 
$Q_2$ on $\Lie(\mathbb G_{\ssc}) = 
\Lie(\mathbb T_{\ssc}) \oplus \mathbb V$. 
In this subsection we 
extend $Q_2$ to a quadratic form $Q_\mathbb G$ on 
$\Lie(\mathbb G) = \Lie(\mathbb T) \oplus \mathbb V$.

Let $Q_\mathbb G$ be the unique quadratic form on $\Lie(\mathbb G)$ such that 
\begin{itemize}
\item
 $\Lie(\mathbb T)$ and $\mathbb V$ are orthogonal, 
\item
$Q_\mathbb G$ restricts to $Q_{\mathbb T}$ on $\Lie(\mathbb T)$, and 
\item
$Q_\mathbb G$ restricts to $Q_2$ on $\mathbb V$.
\end{itemize} 
It is clear that $Q_\mathbb G$ restricts to $Q_2$ on all of $\Lie(\mathbb
G_{\ssc})$.
It is also clear that $Q_\mathbb G$ is invariant under $\mathbb G_{\ad}$. Now $Q_2$
is invariant
under all automorphisms of $\mathbb G_{\ssc}$, and $Q_\mathbb T$ is invariant under
$\Omega_0$ (acting diagonally
on $\mathfrak a \times \mathfrak a$). 
It follows that $Q_\mathbb G$ is invariant under the natural action of $\mathbb
G_{\ad} \rtimes \Omega_0$ on
$\Lie(\mathbb G)$. (In greater detail, $\mathbb G_{\ad}$ is acting by the adjoint
representation,
and $\Omega_0$ is acting by 
the action induced by the diagonal action on $\mathbb G_{\ssc} \times \mathbb
T_{\ssc}$.)

In subsection \ref{sub.QVdef} a nondegenerate quadratic form 
$Q_{\mathbb V}$ is defined, and 
in  subsection \ref{sub.Q_VvsQ_2} there is a discussion of the
 orthogonal direct sum decomposition 
\begin{equation}\label{eq.qvqvqv}
(\mathbb V,Q_{\mathbb V}) = (\mathbb V',Q_{\mathbb V'}) \oplus (\mathbb V'',
Q_{\mathbb V''})
\end{equation}
as well as the relationship between the nondegenerate quadratic forms $Q_{\mathbb
V'}$, $Q_{\mathbb V''}$
and the ones we obtain 
by restriction from $Q_\mathbb G$. From this discussion we conclude that 
  $(\Lie(\mathbb G),Q_\mathbb G)$ decomposes as the  orthogonal direct sum  
\begin{equation}\label{eq.3summands}
(\Lie(\mathbb G),Q_\mathbb G) = (\Lie(\mathbb T),Q_\mathbb T) \oplus (\mathbb
V',Q_{\mathbb V'})
\oplus (\mathbb V'',\ell Q_{\mathbb V''}).  
\end{equation}
(The direct summand $\mathbb V''$ is zero in the simply laced case.)

\section{The split $F$-group $\mathbf G$ and the quadratic form $Q_\mathbf G$ on
its Lie algebra}

\subsection{The pinned $F$-group $\mathbf G$}
We are almost done with our preliminary analysis of the situation over $\mathbb Z$.
We extend scalars from $\mathbb Z$ to $F$. In this way we 
obtain a pinned $F$-group $(\mathbf G,\mathbf B,\mathbf T, \{\eta_\alpha\}) $.

\subsection{The quadratic forms $Q_\mathbf G$, $Q_{\mathbf T}$, 
$Q_\mathbf V$, $Q_{\mathbf V'}$, $Q_{\mathbf V''}$ } 
\label{sub.FourQuads}

By extension of scalars  the quadratic form $Q_{\mathbb G}$  yields 
 a quadratic form $Q_\mathbf G$ on $\mathbf g:=\Lie(\mathbf G)$. 
Similarly, we obtain from $Q_\mathbb T$ a quadratic form $Q_\mathbf T$ on $\mathbf
t:=\Lie(\mathbf T)$,
and from \eqref{eq.qvqvqv}  
we obtain $(\mathbf V,Q_{\mathbf V}) = (\mathbf V',Q_{\mathbf V'}) \oplus (\mathbf
V'',Q_{\mathbf V''})$.

The quadratic forms $Q_\mathbf V$, $Q_{\mathbf V'}$, 
$Q_{\mathbf V''}$ are always nondegenerate. 
We will observe later that, 
when $\ell$ is invertible in $F$, the quadratic forms 
$Q_\mathbf G$, $Q_{\mathbf T}$
are also nondegenerate. When $\ell$ is zero in $F$, 
we will need substitutes for $Q_\mathbf G$, $Q_{\mathbf T}$. 
We discuss these next. 

\subsection{The quadratic forms $Q_{\mathbf g'}$, $Q_{\mathbf g''}$, 
$Q_{\mathbf t'}$, $Q_{\mathbf t''}$} \label{sub.ManyQforms}

In this subsection we assume that $\ell$ is zero in $F$. Then $Q_\mathbf G$ is
degenerate and is therefore
not useful for proving the main theorem.   We get around this 
difficulty by 
applying Lemma \ref{lem.barQ} to $(\Lie(\mathbb G),Q_\mathbb G)$ and the three
direct summands
appearing in \eqref{eq.3summands}. 
For each of these four lattices $\Lambda$, Lemma \ref{lem.barQ} provides a short
exact sequence
\begin{equation}\label{eq.SESellLambdaPerp}
0 \to \Lambda^\perp/ \Lambda \xrightarrow{\ell} \Lambda/\ell\Lambda \to
\Lambda/\ell \Lambda^\perp \to 0
\end{equation}
as well as nondegenerate quadratic forms on the $\mathbb F_\ell$-vector spaces
$\Lambda/\ell \Lambda^\perp$
and $\Lambda^\perp/ \Lambda$. 

Now our assumption that $\ell$ is zero in $F$ means that $\mathbb F_\ell$ is the
prime field in $F$. So we may
tensor \eqref{eq.SESellLambdaPerp} over $\mathbb F_\ell$ with $F$, 
obtaining short exact sequences of $F$-vector spaces 
\[
0 \to \bar\Lambda'' \to \Lambda \otimes_{\mathbb F_\ell} F \to \bar\Lambda' \to 0,\]
as well as canonical nondegenerate quadratic forms on $\bar\Lambda'$ and
$\bar\Lambda''$.

We need notation for all these objects. When $\Lambda = \Lie(\mathbb G)$, we write\begin{equation}\label{SES.mathbfg}
0 \to \mathbf g'' \to \mathbf g \to \mathbf g' \to 0
\end{equation}
for the vector spaces and $Q_{\mathbf g''}$, $Q_{\mathbf g'}$ for the quadratic
forms.
Of course $\mathbf g$ is the Lie algebra of $\mathbf G$. 
Observe that \eqref{SES.mathbfg} is actually a short 
exact sequence of $\mathbf G_{\ad}$-modules, 
and that $Q_{\mathbf g''}$, $Q_{\mathbf g'}$ are both 
invariant under $\mathbf G_{\ad}$.

From the remaining three lattices $\Lie(\mathbb T)$, $\mathbb V'$, $\mathbb V''$ we
obtain short exact sequences of
$N_{\mathbf G}(\mathbf T)$-modules 
\begin{equation}\label{eq.3SES}
\begin{aligned}
&0 \to \mathbf t'' \to \mathbf t \to \mathbf t' \to 0 \\
&0 \to 0 \to \mathbf V' \xrightarrow{\id} \mathbf V' \to 0 \\
&0 \to \mathbf V'' \xrightarrow{\id} \mathbf V'' \to 0 \to 0 
\end{aligned}
\end{equation}
as well as quadratic forms $Q_{\mathbf t'}$, $Q_{\mathbf t''}$, $Q_{\mathbf V'}$,
$Q_{\mathbf V''}$.
It needs to be stressed here that the nondegenerate quadratic 
forms on $\mathbf V'$ and $\mathbf V''$ produced by 
Lemma \ref{lem.barQ} really do agree with the forms 
$Q_{\mathbf V'}$, $Q_{\mathbf V''}$ 
discussed in the previous subsection.  In other words, the factor of $\ell$ 
appearing in the last summand in \eqref{eq.3summands} has disappeared.

\subsection{Spinor norm on the Weyl group } 

We are done constructing all the quadratic forms we need, and are almost ready to
complete
the proof of the main theorem. Before doing so, we are going to 
calculate some spinor norms (see section \ref{sub.RevSpNorm} for a review of the
spinor norm map $\delta$)
 for elements in the Weyl group,  thereby paving the way for applying 
Fr\"ohlich's theorem. We begin 
by defining a sign character on $W$. 

\subsubsection{Definition of $\varepsilon''$}\label{sub.DefEp''}

On the Weyl group we have the usual sign character $\varepsilon:W \to \{\pm 1 \}$.
It takes the value
$-1$ on every reflection $w_\alpha = w_{\alpha^\vee} \in W$. More useful for us,
however, is another
sign character $\varepsilon''$ on $W$,  defined as follows. 

When $\ell =1$ (the simply laced case in which there is only one root length), we
take $\varepsilon''$ to be
$\varepsilon$. 
When $\ell \ne 1$, so that there are two root lengths, we take $\varepsilon''$ to
be
the unique sign character such that 
\[
\varepsilon''(w_\alpha) = 
\begin{cases}
1   & \text{ if $\alpha$ is long,} \\
-1  & \text{ if $\alpha$ is short. }
\end{cases}
\] 

We will mainly need $\varepsilon''$, but there will be one occasion when we will
also need the sign character
$\varepsilon'$ defined through the equality 
\[
\varepsilon =\varepsilon' \varepsilon''.
\]
We will need it only  when there are two root lengths, in which case  
\[
\varepsilon'(w_\alpha) = 
\begin{cases}
1   & \text{ if $\alpha$ is short,} \\
-1  & \text{ if $\alpha$ is long. }
\end{cases}
\]

We will sometimes view $\varepsilon'$ and $\epsilon''$ as having values in the
additive group $\mathbb F_2$, hence as
elements in $H^1(W,\mathbb F_2)$. 

\subsubsection{Spinor norms for $W$ when $\ell$ is invertible in $F$}

Assume that $\ell$ is invertible in $F$. We work with the split $F$-group $\mathbf
G$. We are
particularly interested in the 
nondegenerate quadratic space $(\mathbf t,Q_\mathbf T)$. The action of the Weyl
group preserves
$Q_\mathbf T$ and therefore yields a canonical homomorphism 
\begin{equation}\label{eq.WO}
W \to O(Q_\mathbf T)(F). 
\end{equation}
For any root $\alpha$ the coroot $H_\alpha$ is a vector in  
$\mathbf t$, and the homomorphism 
\eqref{eq.WO} maps $w_\alpha = w_{\alpha^\vee}$ 
to the reflection $r_{H_\alpha}$ in that vector.

Composing    the homomorphism 
\eqref{eq.WO}   with the spinor norm homomorphism 
\[
O(Q_\mathbf T)(F) \xrightarrow{\delta} F^\times/(F^\times)^2,
\]
we obtain a homomorphism 
\begin{equation}\label{eq.Sp1}
W \to F^\times/(F^\times)^2.
\end{equation}

\begin{lemma}\label{lem.SpinNormInvertible}
The value of the homomorphism \eqref{eq.Sp1} on $w \in W$ is equal to the square
class
of 
\[
\begin{cases}
1 & \text{ if $\varepsilon''(w)=1$ } \\
\ell  & \text{ if $\varepsilon''(w)=-1$ }
\end{cases}
\]
In other words  the homomorphism \eqref{eq.Sp1} is equal to the element 
\[
\varepsilon'' \otimes \ell \in H^1(W,\mathbb F_2) \otimes F^\times/(F^\times)^2 
= \Hom(W,F^\times/(F^\times)^2).
\]
\end{lemma}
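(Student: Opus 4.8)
The plan is to reduce the computation of the homomorphism \eqref{eq.Sp1} to the values on the reflections $w_\alpha$, since $W$ is generated by reflections, and then to compute the spinor norm $\delta(r_{H_\alpha})$ directly from the geometry of the quadratic space $(\mathbf t, Q_\mathbf T)$. Recall that for a nondegenerate quadratic space and an anisotropic vector $v$, the spinor norm of the reflection $r_v$ is the square class of $Q_\mathbf T(v)$ (with whatever normalization convention is fixed in section \ref{sub.RevSpNorm} — I would use the one for which $\delta(r_v) = Q_\mathbf T(v) \bmod (F^\times)^2$). So the first step is: for each root $\alpha$, the image of $w_\alpha$ under \eqref{eq.Sp1} is the square class of $Q_\mathbf T(H_\alpha) = Q_\mathbf T(\alpha^\vee)$.

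Next I would invoke Corollary \ref{cor.QTalpha} to evaluate $Q_\mathbf T(\alpha^\vee)$. When $\ell = 1$ we get $Q_\mathbf T(\alpha^\vee) = 1$, so \eqref{eq.Sp1} is trivial on every reflection, hence trivial on $W$; and indeed $\varepsilon''$ is also trivial in $H^1(W,\mathbb F_2)\otimes F^\times/(F^\times)^2$ once we tensor with the square class of $\ell = 1$, so the claimed formula holds vacuously. When $\ell \in \{2,3\}$, Corollary \ref{cor.QTalpha} gives $Q_\mathbf T(\alpha^\vee) = 1$ for $\alpha$ long, while for $\alpha$ short it gives $(\ell Q_\mathbf T)(\alpha^\vee)$-type information; more precisely, combining $Q_\mathbf T(\alpha^\vee) = \ell(\alpha^\vee)$ from Lemma \ref{lem.RootStuff} with the fact that $\ell(\alpha^\vee) = \ell$ for short $\alpha$ (equivalently, $\ell(\alpha^\vee)=1$ precisely when $\alpha^\vee$ is long, i.e. $\alpha$ short — here I would double-check the long/short bookkeeping for coroots versus roots, since duality swaps the two), we obtain that $Q_\mathbf T(\alpha^\vee)$ has square class $1$ when $\alpha$ is long and square class $\ell$ when $\alpha$ is short. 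Thus on reflections \eqref{eq.Sp1} agrees with the square class of $\ell$ raised to the power $(1-\varepsilon''(w_\alpha))/2$, which is exactly the element $\varepsilon'' \otimes \ell$.

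The final step is to upgrade from reflections to all of $W$. Both \eqref{eq.Sp1} and the map $w \mapsto \varepsilon''\otimes \ell$ are homomorphisms $W \to F^\times/(F^\times)^2$ (the former because the spinor norm and \eqref{eq.WO} are homomorphisms, the latter because $\varepsilon''$ is a character), and we have just shown they agree on the generating set $\{w_\alpha\}$; hence they agree on $W$. This also justifies the identification $H^1(W,\mathbb F_2)\otimes F^\times/(F^\times)^2 = \Hom(W, F^\times/(F^\times)^2)$ used in the statement, since $\mathbb F_2 \otimes F^\times/(F^\times)^2 = F^\times/(F^\times)^2$.

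The main obstacle I anticipate is not conceptual but bookkeeping: getting the long/short conventions exactly right when passing between roots and coroots (duality interchanges long and short), and matching the normalization of the spinor norm map $\delta$ fixed in the review section \ref{sub.RevSpNorm} so that $\delta(r_v)$ is literally the square class of $Q_\mathbf T(v)$ rather than its inverse or a twist by $\disc$. Once those conventions are pinned down, the argument is a short computation feeding Corollary \ref{cor.QTalpha} into the reflection-by-reflection formula for the spinor norm.
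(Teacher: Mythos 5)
Your proposal is correct and follows essentially the same route as the paper: reduce to reflections, note that $w_\alpha$ maps to $r_{H_\alpha}$, compute the spinor norm as the square class of $Q_{\mathbf T}(H_\alpha)=\ell(\alpha^\vee)$, and match this against $\varepsilon''$. The one slip is the parenthetical ``$\ell(\alpha^\vee)=1$ precisely when $\alpha^\vee$ is long, i.e.\ $\alpha$ short'' --- it should read $\alpha^\vee$ \emph{short}, i.e.\ $\alpha$ \emph{long} --- but you flagged this yourself and your stated conclusion has the long/short dichotomy the right way around.
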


\begin{proof}
It is enough to prove this when $w$ is a reflection $w_\alpha = w_{\alpha^\vee}$.
Then $w$ is sent to
 reflection in the coroot $H_\alpha$, whose spinor norm  
(see subsection \ref{sub.SpinNormRefl}) is the square 
class of $Q_\mathbf T(H_\alpha) = \ell(\alpha^\vee) \in F^\times$. 
\end{proof}

\subsubsection{Spinor norms for $W$ when $\ell$  is  zero in $F$} 

Now the Weyl group $W$ acts as a group of automorphisms of 
the nondegenerate quadratic spaces $(\mathbf t',Q_{\mathbf t'})$, $(\mathbf
t'',Q_{\mathbf t''})$, so
there are natural homomorphisms from $W$ to $O(Q_{\mathbf t'})(F)$ and
$O(Q_{\mathbf t''})(F)$.
As usual we denote spinor norm maps by $\delta$. 

\begin{lemma}\label{lem.SpinNormZero}
The composed maps 
\begin{align}
W \to O(Q_{\mathbf t'})(F) \xrightarrow{\delta}  F^\times/(F^\times)^2 \\
W \to O(Q_{\mathbf t''})(F) \xrightarrow{\delta}  F^\times/(F^\times)^2
\end{align}
 are both trivial. 
\end{lemma}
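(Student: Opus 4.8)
The plan is to reduce, as in Lemma \ref{lem.SpinNormInvertible}, to the case where $w$ is a reflection $w_\alpha = w_{\alpha^\vee}$, since $W$ is generated by reflections and the spinor norm composed with the action map is a homomorphism to the abelian group $F^\times/(F^\times)^2$. So the whole lemma comes down to computing, for each root $\alpha$, the spinor norm of the image of $w_\alpha$ in $O(Q_{\mathbf t'})(F)$ and in $O(Q_{\mathbf t''})(F)$, and checking it is trivial. By the general principle used before (subsection \ref{sub.SpinNormRefl}, \ref{sub.SpinNormRefl}), if $w_\alpha$ acts on $(\mathbf t', Q_{\mathbf t'})$ as the reflection in some anisotropic vector $v \in \mathbf t'$ arising from $H_\alpha$, its spinor norm is the square class of $Q_{\mathbf t'}(v)$; likewise for $\mathbf t''$. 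So I would first pin down exactly which vectors of $\mathbf t'$ and $\mathbf t''$ the coroot $H_\alpha \in \mathbf t$ maps to under the maps $\mathbf t \to \mathbf t'$ and (after restricting to $\mathbf t'' \subseteq \mathbf t$) identify the relevant vector in $\mathbf t''$, using the short exact sequence $0 \to \mathbf t'' \to \mathbf t \to \mathbf t' \to 0$ coming from \eqref{eq.3SES} and the underlying integral sequence \eqref{eq.SESellLambdaPerp} for $\Lambda = \Lie(\mathbb T)$.

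The key computation is then to evaluate $Q_{\mathbf t'}$ and $Q_{\mathbf t''}$ on those vectors and show each value is a square in $F^\times$. Here I expect to invoke Lemma \ref{lem.RootStuff} and Corollary \ref{cor.QTalpha}: over $\mathbb Z$ we have $Q_\mathbb T(\alpha^\vee) = \ell(\alpha^\vee)$, which is $1$ for $\alpha$ long (and always $1$ when $\ell = 1$, but that case is excluded here since $\ell$ is zero in $F$ forces $\ell \in \{2,3\}$), while $(\ell Q_\mathbb T)(\alpha) = 1$ for $\alpha$ short. Reducing mod $\ell$ and the construction of $Q_{\mathbf t'}$, $Q_{\mathbf t''}$ via Lemma \ref{lem.barQ} should show that the reduction of $\alpha^\vee$ lands in one of $\bar\Lambda'$, $\bar\Lambda''$ with quadratic value equal to the image of $1$, hence a square (namely $1$ itself in $F^\times/(F^\times)^2$). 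The point is that the integrality statements in Lemma \ref{lem.QTint} — $Q_\mathbb T$ integral on $\Lambda$, $\ell Q_\mathbb T$ integral on $\Lambda^\perp$ — are exactly what guarantee the values of the induced forms $Q_{\mathbf t'}$, $Q_{\mathbf t''}$ on the classes of coroots are the images of honest integers, in fact of $1$.

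The main obstacle, I expect, is bookkeeping: tracking a fixed coroot $H_\alpha$ (long versus short) through the two-step passage — first mod $\ell$ into $\Lambda/\ell\Lambda$, then splitting via \eqref{eq.SESellLambdaPerp} into the sub/quotient pieces $\Lambda^\perp/\Lambda$ and $\Lambda/\ell\Lambda^\perp$, then tensoring up to $F$ — and confirming that the vector one actually reflects in (which must be anisotropic for the spinor-norm formula to apply) is the one whose $Q$-value Lemma \ref{lem.RootStuff} controls, rather than some scalar multiple that would change the square class by a non-square. A secondary subtlety is handling both root lengths at once: a short coroot and a long coroot may land in different summands ($\mathbf t'$ versus $\mathbf t''$), so one should check each of $\mathbf t'$, $\mathbf t''$ sees only the appropriate class of coroots, and in each case the value is $1$. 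Once that dictionary is set up, both composed maps are trivial because every generating reflection has trivial spinor norm, and a homomorphism to $F^\times/(F^\times)^2$ killing all generators is zero.
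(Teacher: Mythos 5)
Your proposal follows the paper's proof exactly: reduce to reflections $w_\alpha$, then compute spinor norms by identifying the anisotropic vector that each reflection uses in $\mathbf t'$ and $\mathbf t''$, invoking Corollary \ref{cor.QTalpha}. The bookkeeping you correctly flagged as the main obstacle is precisely the content of Lemma \ref{lem.Qv1}, which you should cite: for $\alpha$ long, the coroot $\alpha^\vee \in \Lambda$ has $Q_{\mathbb T}(\alpha^\vee)=1$, so Lemma \ref{lem.Qv1} gives $(w',w'')=(r_{v'},1)$ with $Q_{\mathbf t'}(v')=1$. For $\alpha$ short you must work with $\alpha \in \Lambda^\perp$ rather than $\alpha^\vee$ (whose $Q_{\mathbb T}$-value is $\ell$, which is $0$ in $F$, hence isotropic), noting that $w_{\alpha^\vee}=r_\alpha$ since $\alpha^\vee$ is a scalar multiple of $\alpha$, and $(\ell Q_{\mathbb T})(\alpha)=1$, whence $(w',w'')=(1,r_{v''})$ with $Q_{\mathbf t''}(v'')=1$.
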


\begin{proof} 
Let $w \in W$ and write $w'$ (resp., $w''$) for the image of $w$ in $O(Q_{\mathbf
t'})$
(resp., $O(Q_{\mathbf t''})$). 
It is enough to prove that the spinor norms of $w'$ and $w''$ are trivial when $w$
is a reflection. So let us
consider the reflection $w = w_\alpha = w_{\alpha^\vee} \in W$. 
From Corollary \ref{cor.QTalpha} we know that 
\begin{itemize} 
\item 
$Q_{\mathbb T}(\alpha^\vee) = 1$ when $\alpha$ is long, and 
\item 
$(\ell Q_{\mathbb T})(\alpha) = 1$ when $\alpha$ is short. 
\end{itemize}
So, when $\alpha$ is long (resp., short), Lemma \ref{lem.Qv1} implies that
$(w',w'')$ is equal to
$(r_{v'},1)$ (resp., $(1,r_{v''})$), where $v'$ is the image of $\alpha^\vee$ in
$\Lambda/\ell\Lambda^\perp$
(resp., $v''$ is the image of $\alpha$ in $\Lambda^\perp/\Lambda$). Now 
$Q_{\mathbf t'}(v')=Q_\mathbb T(\alpha^\vee) =1$ 
(resp., $Q_{\mathbf t''}(v'')=(\ell Q_\mathbb T)(\alpha) =1$), so 
(see subsection \ref{sub.SpinNormRefl}) all the relevant spinor 
norms are indeed trivial. 
\end{proof}

\section{End of the proof of the main theorem}

\subsection{The quasi-split $F$-group $G$ and the quadratic form $Q_G$} 

The Galois group acts 
trivially on $\Omega_0$, so $1$-cocycles of $\Gamma$ in $\Omega_0$ are simply 
homomorphisms $\varphi_0 : \Gamma \to \Omega_0$.  
As before we are interested in   
the diagonal action of $\Omega_0$ on $\mathbf G_{\ssc} \times \mathbf T_{\ssc}$ and
the induced action on
the quotient $\mathbf G$. We use this 
action, together with $\varphi_0$, to twist $(\mathbf G,\mathbf B,\mathbf T,
\{\eta_\alpha\}) $.
The result is a quasi-split $F$-group $G$ equipped with an $F$-splitting 
 $(B_0, T_0,\{\eta_\alpha\} )$. 
(One has $\sigma(\eta_\alpha) = \eta_{\sigma\alpha}$ for all 
$\sigma \in \Gamma$.) We are writing $T_0$ for the twist of 
$\mathbf T$ by $\varphi_0$ in order to keep $T$ in reserve as notation 
for an arbitrary maximal $F$-torus in $G$. The $F$-torus $T_0$ 
is of course very special, because there exists
a Borel $F$-subgroup, namely $B_0$, containing it. 

Now the action of $\Omega_0$ preserves the quadratic 
form $Q_{\mathbf G}$, which therefore  remains $F$-rational  
after twisting.  In other words 
we may now view it as a canonically defined quadratic form, 
call it $Q_G$, on $\mathfrak g =\Lie(G)$.
See subsection \ref{sub.TwistQ}  for a review of twisting of 
quadratic forms. 

It follows from Lemmas \ref {lem.QTint} and \ref{lem.barQ}, 
together with our comparison of $Q_2$
and $Q_\mathbb V$ (see subsection \ref{sub.Q_2})   
that $Q_G$ is nondegenerate when $\ell$ is invertible in $F$.

\subsection{What remains to be proved?}

For any two maximal $F$-tori $T$, $\tilde T$ in $G$ we need to prove that 
\[
\gamma(Q_{\tilde V},\psi)\gamma(Q_V,\psi) ^{-1} = \epsilon_L(X_*(\tilde T)_\mathbb
C - X_*(T)_\mathbb C,\psi).
\]
Applying Corollary \ref{cor.Wep}, we are reduced to proving that 
\begin{equation}\label{eq.SWgoal}
HW(Q_V,Q_{\tilde V}) = \overline{SW}(X_*(\tilde T) - X_*(T)) . 
\end{equation}
(The expressions $HW$ and $\overline{SW}$ appearing here are discussed in the
appendices.)

The maximal torus $T$ is obtained from $T_0$ by twisting by a $1$-cocycle in the
normalizer of $T_0$
in $G$. So $T$ is obtained from $\mathbf T$ by twisting by a $1$-cocycle of the
form $n_\sigma\varphi_0(\sigma)$
with $n_\sigma$ in the $\overline{F}$-points of the normalizer $N_{\mathbf
G}(\mathbf T)$ of $\mathbf T$ in $\mathbf G$.
The same goes for $\tilde T$, which is obtained from $\mathbf T$ by twisting by 
a $1$-cocycle of the form $\tilde n_\sigma\varphi_0(\sigma)$. 

At this point we need to consider two cases: either $\ell$ is invertible in $F$, or
$\ell$ is zero in $F$.

\subsection{End of the proof when $\ell$ is invertible in $F$}
We now assume that $\ell$ is invertible in $F$, which ensures that $Q_G$ is
nondegenerate.
We write $Q_T$ for the restriction of $Q_G$ to $\mathfrak t =\Lie(T)$; it is
obtained by twisting $Q_{\mathbf T}$.
 In fact the whole situation 
$(\mathfrak g, Q_G) = (\mathfrak t, Q_T) \oplus (V, Q_G|_V)$ 
is obtained by twisting 
$(\mathbf g,Q_\mathbf G) = (\mathbf t,Q_\mathbf T) \oplus (\mathbf V,Q_\mathbf
G|_{\mathbf V})$
by the $1$-cocycle $n_\sigma\varphi_0(\sigma)$. (Recall from subsection
\ref{sub.FourQuads}
that  $\mathbf V$ is 
obtained by extension of scalars from  $\mathbb V$.) 

We let $w_\sigma$ denote the image of $n_\sigma$ under the natural homomorphism
$N_{\mathbf G}(\mathbf T) \twoheadrightarrow W$. Now put
$\varphi(\sigma):=w_\sigma\varphi_0(\sigma) \in W \rtimes \Omega_0 =
\Omega$. Then $\varphi$ is a homomorphism  $\Gamma \to \Omega$, and 
it is also true that $T$ is obtained from the split torus 
$\mathbf T$ by twisting by $\varphi$, viewed as a $1$-cocycle of $\Gamma$ 
in $\Omega$, with $\Gamma$ acting trivially on $\Omega$. Similarly we obtain
$\tilde w_\sigma$, $\tilde \varphi$
from $\tilde T$. In addition to $\varphi,\tilde \varphi$ we will soon need the sign
character $\chi$ on
$\Gamma$ defined by  
\begin{equation}\label{eq.DefChi}
\chi(\sigma) := \varepsilon''(\tilde w_\sigma w_\sigma^{-1}) =
 \varepsilon''( \tilde{\varphi}(\sigma) \varphi(\sigma)^{-1}).
\end{equation}
To see that $\chi$ really is multiplicative in $\sigma$, one needs to notice that
the sign
character $\varepsilon'' $ 
(defined in  subsection \ref{sub.DefEp''}) is invariant 
under  the action of $\Omega_0$ on $W$.

Twisting the decomposition 
$\mathbf V = \mathbf V' \oplus \mathbf V''$, we obtain a decomposition  
$V  = V' \oplus V''$. When $\ell =1$ (the simply laced case)   
$V'=V$ and $V''=0$. 
When $\ell = 2,3$ (the non-simply laced case) 
$V'$ is the sum of the root spaces for the long roots of $T$,  and $V''$ 
is the sum of the root spaces for the short roots of $T$.  
The twist of $Q_\mathbf V$ coincides with the quadratic form $Q_V$ 
defined in subsection \ref{subsub.ThirdIngredient}. 
We denote the restriction of $Q_V$ to $V'$ (resp., $V''$) by $Q_{V'}$ (resp.,
$Q_{V''}$).

It follows from \eqref{eq.3summands} that 
\begin{equation} \label{eq.TV'V''}
(\mathfrak g, Q_G) = (\mathfrak t, Q_T) \oplus (V', Q_{V'}) 
\oplus (V'',\ell Q_{V''}). 
\end{equation}
All these quadratic spaces are nondegenerate.  
 From this orthogonal decomposition  
(and its  analog for $\tilde T$), we find that 
\[
\Wall(Q_T) + \Wall(Q_{V'}) + \Wall(\ell Q_{V''})  
\]
is equal to its analog for $\tilde T$
(see Appendix \ref{app.WgWh} for a discussion of the Wall 
homomorphism). 
It follows  from Lemma \ref{lem.HWvs Wall}(2) that 
\begin{align*}
0 &= HW(Q_T,Q_{\tilde T}) + HW(Q_{V'}, Q_{\tilde V'}) + HW(\ell Q_{V''}, \ell
Q_{\tilde V''}) \\
&= HW(Q_T,Q_{\tilde T}) + HW(Q_{V}, Q_{\tilde V}) + HW(\ell Q_{V''}, \ell Q_{\tilde
V''})
- HW( Q_{V''}, Q_{\tilde V''}).
\end{align*}

To finish the proof of \eqref{eq.SWgoal}, it remains only to justify the following
claims.
\begin{enumerate}
\item
$HW(Q_T,Q_{\tilde T}) = \overline{SW}(\varphi) - \overline{SW}(\tilde \varphi) +
\xi(\chi\otimes \ell)$.
\item
$HW(\ell Q_{V''},\ell Q_{\tilde V''}) = HW(Q_{V''},Q_{\tilde V''}) + \xi(\chi
\otimes \ell)$.
\item $\overline{SW}(X_*(T)_\mathbb C) = \overline{SW} (\varphi)$, 
and similarly for $(\tilde T,\tilde\varphi)$.
\end{enumerate}
Here $\chi$ is the sign character on $W$ we defined (see equation  \eqref{eq.DefChi}) using
$\varepsilon''$,
$\varphi$ and $\tilde \varphi$.  The homomorphism $\xi$ is 
defined in Appendix \ref{app.FrohArb}.

For item (1) we appeal to the result of Fr\"ohlich which is reviewed (and extended
to characteristic $2$) in subsection \ref{sub.FrohReview}.
We apply his result to  $\varphi$, viewed as an 
 orthogonal representation of $\Gamma$ on 
the quadratic space $(\mathbf t,Q_{\mathbf T})$ over $F$. Fr\"ohlich's result says
that
\[
HW(Q_T,Q_\mathbf T) = \overline{SW}(\varphi) + \xi(\delta \circ \varphi), 
\] 
and similarly with $(T,\varphi)$ replaced by $(\tilde T,\tilde{\varphi})$. 
It follows that 
\begin{align*}
HW(Q_T,Q_{\tilde T}) &= HW(Q_T,Q_\mathbf T) + HW(Q_\mathbf T,Q_{\tilde T}) \\
&= \overline{SW}(\varphi) + \xi(\delta \circ \varphi) -
\overline{SW}(\tilde{\varphi}) - \xi(\delta \circ \tilde{\varphi})\\
&= \overline{SW}(\varphi)  -\overline{SW}(\tilde\varphi)  + \xi(\chi \otimes \ell).\end{align*}
The last equality follows from   Lemma \ref{lem.SpinNormInvertible}. 

For item (2) we apply Lemma \ref{cor.WaSC}, bearing in mind 
equation \eqref{eq.XiVsLocSymb}, thereby obtaining  
\begin{align*}
\Wall(\ell Q_{V''}) &= \Wall( Q_{V''}) + \xi(\chi_{Q_{V''}} \otimes \ell), \\ 
\Wall(\ell Q_{\tilde V''}) &= \Wall( Q_{\tilde V''}) + \xi(\chi_{Q_{\tilde V''}}
\otimes \ell),
\end{align*} 
from which it follows that 
\[
HW(\ell Q_{V''},  \ell Q_{\tilde V''} )      = HW( Q_{V''}, Q_{\tilde V''}  ) + 
\xi(\chi_{Q_{V''}}^{-1}   \chi_{Q_{\tilde V''}} \otimes \ell)  
\]
So we just need to show that $\chi_{Q_{\tilde V''}}$ is the product of $\chi_{Q_{
V''}}$ and $\chi$.

For this it is enough to prove that the following square commutes: 
\begin{equation*}
\begin{CD}
N_\mathbb G(\mathbb T) @>>> O(Q_{\mathbb V''}) \\
@VVV @V{\deg}VV \\
W @>{\varepsilon''}>> \mathbb F_2.
\end{CD}
\end{equation*}

Let's call the top horizontal arrow $t$. It is clear that  the composed map 
$\deg \circ \, t$ is trivial on the identity component $\mathbb T$, so we are
really trying
to prove the equality of two sign characters on $W$. For this it suffices to show
that they
agree on every simple reflection. So, let  $\alpha$ be  a simple root, and 
consider  $n_\alpha \in N_\mathbb G(\mathbb T)$ whose image in $W$ is the simple 
reflection $s_\alpha$. We need to show that $\deg(t(n_\alpha))$ is trivial (resp.,
nontrivial)
if $\alpha$ is long (resp., short). 

Recall that $\mathbb V''$ is the direct sum of the root spaces for all of the short
roots.
Now the set $R''$ of short roots is the disjoint union of the sets $R'' \cap R^+$
and $R'' \cap R^-$,
so  $\mathbb V''$ decomposes accordingly as 
\begin{equation}\label{eq.V''pm}
\mathbb V''= \mathbb V''^+ \oplus \mathbb V''^-. 
\end{equation}
The action of $s_\alpha$ on $R$ interchanges $\alpha$, $-\alpha$.
It also preserves 
the sets $R^+ \setminus \{\alpha\}$ and $R^- \setminus \{ -\alpha \}$, as well as
the set $R''$.

So, when $\alpha$ is long, $s_\alpha$ preserves both $R'' \cap R^+$ and $R'' \cap
R^-$, and therefore the orthogonal transformation
$t(n_\alpha)$ preserves the  decomposition \eqref{eq.V''pm}
of $\mathbb V''$ as the orthogonal direct sum of two isotropic 
subspaces. It follows that $\deg(t(n_\alpha))$ is trivial, as 
desired. 

When $\alpha$ is short, similar considerations imply that 
$\deg(t(n_\alpha)) = \deg(t_{\pm \alpha}(n_\alpha))$,
where $ t_{\pm \alpha}(n_\alpha)$ denotes the image of $n_\alpha$ in the orthogonal
group of the hyperbolic plane
$\Lie(\mathbb G_\alpha) \oplus \Lie(\mathbb G_{-\alpha})$.
But $n_\alpha$ interchanges these two root spaces, so the sign 
character $\deg$ for the orthogonal group of this hyperbolic plane 
is nontrivial on $t_{\pm \alpha}(n_\alpha)$, as desired.
The verification of item (2) is now complete.

Finally, for item (3)  we  appeal to Appendix \ref{app.SWredEll}.  
We are now  done with the case in which $\ell$ is invertible in $F$.

\subsection{The case when $\ell$ is zero in $F$}

Now assume that $\ell$ is $0$ in $F$, which is to say that the prime field of $F$
is $\mathbb F_\ell$.
The canonical quadratic form $Q_G$ is then degenerate, so it does not have a Weil
index, and we must modify
the procedure used when $\ell$ is invertible in $F$. 

Since the hypothesis that $\ell = 0$ in $F$ implies that $\ell$ is either 
$2$ or $3$, the irreducible root system $R$ under consideration is not simply
laced, and the group $\Omega_0$ is
necessarily trivial. So no twisting takes place, and $G$ coincides with the split
$F$-group $\mathbf G$
obtained from $\mathbb G$ by extension of scalars from $\mathbb Z$ 
to $F$. Moreover $T_0$, $\Lie(T_0)$, $\mathfrak g$ coincide with 
$\mathbf T$, $\mathbf t$, $\mathbf g$ respectively.

The maximal torus $T$ is obtained from $\mathbf T$ 
by twisting by a $1$-cocycle $n_\sigma$ in
$N_\mathbf G(\mathbf T)$.   As before we denote by 
$w_\sigma$ the image of $n_\sigma$ under 
$N_\mathbf G(\mathbf T) \to W$. Because $\mathbf T$ is split,
the map $\varphi: \Gamma \to W$ 
defined by $\varphi(\sigma) = w_\sigma$ is a homomorphism.

Twisting the short exact sequences \eqref{eq.3SES} by the $1$-cocycle
$n_\sigma$   then yields short exact sequences 
\begin{equation}
\begin{aligned}
&0 \to \mathfrak t'' \to \mathfrak t \to \mathfrak t' \to 0 \\
&0 \to 0 \to V' \xrightarrow{\id} V' \to 0 \\
&0 \to V'' \xrightarrow{\id}  V'' \to 0 \to 0 
\end{aligned}
\end{equation}
as well as quadratic forms $Q_{\mathfrak t'}$, $Q_{\mathfrak t''}$, $Q_{ V'}$, $Q_{
V''}$.
Moreover $Q_{V'}$ and $Q_{V''}$ coincide with the quadratic forms  
obtained by restriction from $Q_V$ on $V=V'\oplus V''$ (see the remark 
at the end of subsection \ref{sub.ManyQforms}).

Observe that there are $T$-invariant orthogonal direct sum decompositions 
\[
\mathfrak g' = \mathfrak t' \oplus V', \quad \mathfrak g'' = \mathfrak t'' \oplus
V''.
\]
To finish the proof we are going to use $(\mathfrak g' \oplus \mathfrak g'',
Q_{\mathfrak g'} \oplus
Q_{\mathfrak g''})$ in the same way we used $(\mathfrak g,Q_G)$ before. Observe
that
\begin{equation}\label{eq.Qg'g''frak}
Q_{\mathfrak g'} \oplus Q_{\mathfrak g''} = Q_{\mathfrak t'} \oplus Q_{\mathfrak
t''} \oplus Q_V.
\end{equation}
So, although the notation is more elaborate, the situation is simpler in one
respect: the
restriction of $Q_{\mathfrak g'} \oplus Q_{\mathfrak g''}$ to $V' \oplus V'' = V$
is precisely
the canonical quadratic form $Q_V$ we need to understand.

Everything we have just done for $T$ can also be done for $\tilde T$.  
Objects for $\tilde T$ are always denoted by adding a tilde 
to the name of the corresponding object for $T$.
From \eqref{eq.Qg'g''frak} and  
 its analog for $\tilde T$ we find that 
\[
0 = HW(Q_{\mathfrak t'},Q_{\tilde{\mathfrak t}'}) + HW(Q_{\mathfrak
t''},Q_{\tilde{\mathfrak t}''})
+ HW(Q_V,Q_{\tilde V}). 
\]
To complete the proof of \eqref{eq.SWgoal} it suffices to verify the following
claims.
\begin{enumerate}
\item
$HW(Q_{\mathfrak t'},Q_{\tilde{\mathfrak t}'}) = \overline{SW}(\varphi') -
\overline{SW}(\tilde{\varphi}')$.
\item 
$HW(Q_{\mathfrak t''},Q_{\tilde{\mathfrak t}''}) = \overline{SW}(\varphi'')
-\overline{SW}(\tilde{\varphi}'')$.
\item 
$\overline{SW}(X_*(T)_\mathbb C) = \overline{SW}(\varphi') +
\overline{SW}({\varphi}'')$.
\end{enumerate}
Here $\varphi'$ (resp., $\varphi''$) denotes the orthogonal representation of
$\Gamma$ on $\mathbf t'$ (resp.,
$\mathbf t''$) 
obtained from $\varphi:\Gamma \to W$ and the homomorphism 
$W \to O(Q_{\mathbf t'})$ 
(resp.,  $W \to O(Q_{\mathbf t''})$).

To prove  (1) we observe that 
\begin{align*}
HW(Q_{\mathfrak t'},Q_{\tilde{\mathfrak t}'}) &= 
HW(Q_{\mathfrak t'},Q_{\mathbf t'}) - 
HW(Q_{\tilde{\mathfrak t}'},Q_{\mathbf t'}) \\
&= \overline{SW}(\varphi') - \overline{SW}(\tilde{\varphi}').
\end{align*}
Here we applied  Fr\"ohlich's result 
 to $\varphi'$, $\tilde{\varphi}'$, using Lemma \ref{lem.SpinNormZero},  
 which asserts  
that the spinor norm map is trivial on the image of $W$ in 
$O(Q_{\mathbf t'})$. Claim (2) is proved in the same way. 

To prove (3) we need to consider the subcases $\ell = 2$ 
and $\ell = 3$ separately. When $\ell = 2$, our field $F$ 
has characteristic $2$, and the only real content in (3) is
that $\overline{SW}_1(X_*(T)_\mathbb C) = \overline{SW}_1(\varphi') +
\overline{SW}_1({\varphi}'')$.
This is not quite as obvious as one might think, 
since the sign character $\deg$ on orthogonal groups
in characteristic $2$ is not given by the determinant. 
Fortunately the Weyl group is generated by
reflections, so Lemma \ref{lem.Qv1} does the job. 
(In characteristic $2$ it remains true that 
the sign character $\deg$ is nontrivial on reflections.)

It remains to prove (3) when $\ell = 3$. We have three orthogonal representations
of $W$,
one on the $\mathbb C$-vector space $X_*(\mathbb T)_\mathbb C$,  one on 
the $F$-vector space $\mathbf t'$, and one on  the $F$-vector space $\mathbf t''$. It suffices to prove the equality 
\[
{SW}(X_*(\mathbb T)_\mathbb C)      = {SW}(\mathbf t') + {SW}(\mathbf t'') 
\]
of elements in  the group $1 + H^1(W,\mathbb F_2) + H^2(W,\mathbb F_2)$.

Since $\ell=3$, our root system is $G_2$. Therefore $\mathbb G_{\ssc} = \mathbb
G_{\ad}$,
and $\mathbb T$ is simply the cartesian product of two copies of $\mathbb
T_{\ssc}$. The second copy
(the one that is central in $\mathbb G$) has 
no effect on anything, so we are reduced to proving 
the equality   
\[
{SW}(X_*(\mathbb T_{\ssc})_\mathbb C)      = {SW}(\mathbf u') + {SW}(\mathbf u'') 
\in 1 + H^1(W,\mathbb F_2) + H^2(W,\mathbb F_2) ,
\]
where $\mathbf u''$ is the line in 
$\mathbf u := X_*(\mathbb T_{\ssc})_F$ generated by any 
 long coroot, and $\mathbf u'$ is the quotient line 
 $\mathbf u / \mathbf u''$. 
(Remember that $F$ has characteristic $3$. The reflection 
representation $\mathbf u$ of $W$ is 
reducible, but not semisimple. The unique $W$-invariant line 
is $\mathbf u''$; it contains all the long coroots.)

Recall from subsection \ref{sub.DefEp''} that the sign character 
$\varepsilon$ on $W$ is the product 
of two sign characters $\varepsilon'$ and $\varepsilon''$. 
One checks that $W$ acts on the line $\mathbf u'$ (resp., $\mathbf u''$) 
by the sign character $\varepsilon'$ (resp., $\varepsilon''$).

The Weyl group is dihedral of order $12$. It is not difficult to see that 
\[
{SW}(X_*(\mathbb T_{\ssc})_\mathbb C) = 
1 + \varepsilon + \varepsilon' \cup \varepsilon'' \in 1 + H^1(W,\mathbb F_2) +
H^2(W,\mathbb F_2),
\] 
and this is the sum of 
$SW(\mathbf u') = 1 + \varepsilon'$ and $SW(\mathbf u'') = 1 + \varepsilon''$, as
desired.
The group law used here is the usual one:
\[
(1 + a + b)(1+ a' +b') = 1 + (a+a') + (a \cup a' + b + b'). 
\]

\appendix

\section{Witt group and the Wall homomorphism} \label{app.WgWh}

In this section we work over an arbitrary field $F$. By algebra we mean
$F$-algebra, and by dimension we mean $F$-dimension.  We choose a
separable closure $\overline F$ of $F$ and write $\Gamma$ for the Galois
group $\Gal(\overline F/F)$. Throughout Appendix \ref{app.WgWh} all quadratic
spaces
we consider 
are tacitly assumed to be nondegenerate, as these are the only ones relevant for
the Witt group.

\subsection{Quadratic \'etale algebras} Recall that an \emph{\'etale}
algebra is product of finitely many finite separable field extensions of
$F$. A \emph{quadratic} \'etale algebra is an \'etale algebra of dimension
$2$.  A quadratic \'etale algebra $E$ is either a separable quadratic field
extension or else isomorphic to $F \times F$. In either case there is a
unique nontrivial automorphism of $E$ over $F$, denoted by $y \mapsto \bar
y$. There is a canonical quadratic form $Q_E$ on the $2$-dimensional
$F$-vector space $E$, given by
$Q_E(y)=y\bar y$. Of course $Q_E$ is nothing but the norm mapping
$N_{E/F}:E \to F$.

Attached to $E$ is a canonical sign character
$\chi_{E}$ on $\Gamma$. When $E$ is isomorphic to $F \times F$, the
character $\chi_{E}$ is trivial. When $E/F$ is a separable quadratic
field extension, $\chi_{E}$ is the composed map 
\[
\Gamma \twoheadrightarrow \Gal(E/F) \simeq \{\pm 1 \} 
\] 
(which is of course independent of the choice of embedding of $E$ in
$\overline F$). 

\subsection{Quaternion algebras} A \emph{quaternion} algebra $D$  
is a central simple algebra of dimension $4$. There are two possibilities.
One is that $D$ is a division algebra. The other is that $D$ is
\emph{split}, i.e.~isomorphic to $M_2F$. In any case there is a canonical
anti-involution $y \mapsto \bar y$ on $D$. It is characterized by the
fact that $y + \bar y$ (resp., $y\bar y$) is the reduced trace (resp.,
reduced norm) of $y$. There is a canonical quadratic form $Q_D$ on the
$4$-dimensional vector space $D$, 
given by $Q_D(y)=y\bar y = \bar y y$. 

\subsection{The quaternion algebras $D(E,a)$}
There is a standard way of  attaching a $\mathbb
Z/2\mathbb Z$-graded quaternion algebra
$D=D(E,a)$ to a pair consisting of a quadratic \'etale algebra $E$ and an
element $a \in F^\times$. From now on we will write $\mathbb F_2$ rather
than $\mathbb Z/2\mathbb Z$.  The isomorphism class of
$D(E,a)$ (as
$\mathbb F_2$-graded  algebra) depends only on $a$ modulo norms
from $E^\times$. 

Here is a description of $D=D_0 \oplus D_1$. As before we write $e \mapsto
\bar e$ for the unique nontrivial automorphism of $E$ over $F$. 
\begin{enumerate}
\item The even part $D_0$ of $D$ is $E$. So $E$ is a subalgebra of $D$. 
\item The odd part  $D_1$ of $D$ is  
free of rank $1$ as both left  and right $D_0$-module, hence is  
$2$-dimensional over $F$. 
\item There is an element $x \in D_1$ such that 
\begin{enumerate}
\item $x^2=a \in F \subset
D_0$, 
\item   $xe=\bar ex$  for all $e \in E = D_0$. 
\end{enumerate}
\end{enumerate} 
 Observe that $x$ is  a basis for $D_1$ as both left
and right $D_0$-module. 

 The canonical
anti-involution on $D$ preserves the $\mathbb F_2$-grading. It 
\begin{enumerate}
\item acts on $D_0=E$ by the unique nontrivial 
automorphism of $E/F$, and 
\item acts on $D_1=Ex$ by multiplication by $-1$. 
\end{enumerate}
Because of the first item, there is no conflict  
in using $y \mapsto \bar y$
to denote both  the unique nontrivial 
automorphism of $E/F$ and the canonical anti-involution of $D/F$. 

\subsection{Quadratic spaces}

A quadratic space is a pair $(V,Q)$ consisting of a finite dimensional
vector space $V$ and a quadratic form $Q$ on $V$ (i.e.~$Q$ is an
element in the second symmetric power of $V^*$). 
We then obtain a symmetric bilinear form $B$ on $V$
defined by
$Q(x+y)=Q(x)+B(x,y)+Q(y)$. Note that
$B(x,x)=2Q(x)$. Thus $B$ is an alternating form when $F$ has
characteristic $2$. A quadratic space $(V,Q)$ is said to be
nondegenerate when the bilinear form $B$ is nondegenerate. 
 In characteristic $2$ a nondegenerate quadratic space is 
 necessarily even dimensional. 
Throughout  this appendix we tacitly assume that the quadratic 
spaces we consider are nondegenerate.

\subsection{Even Witt group $W_0(F)$} 
We write $W_0(F)$ for the Witt group of even-dimensional quadratic
spaces  over $F$, and we call $W_0(F)$ the \emph{even Witt group} of $F$. 
Given an even dimensional quadratic space
$(V,Q)$ over
$F$, we  write $[Q]$ for the class of $(V,Q)$ in the even Witt group. We
remind the reader that the equations
\begin{align*}
[Q]+[-Q]&=0, \\
[Q_{F\times F}]&=0 
\end{align*}
hold in $W_0(F)$. (Notice that $Q_{F\times F}$ 
is a hyperbolic plane.) 

Given an even dimensional quadratic space $(V,Q)$ over $F$, 
we write $C(Q)$ for the Clifford algebra of $Q$. 
Then $C(Q)$ is a central simple algebra of
dimension $2^n$, where $n=\dim(V)$. Moreover $C(Q)$ is an 
$\mathbb F_2$-graded algebra. It is a standard fact that 
\begin{equation}\label{eq.PlusTimes}
C(Q_1 \oplus Q_2)=C(Q_1) \otimes_s C(Q_2). 
\end{equation}
Here $Q_1 \oplus Q_2$ denotes the
orthogonal direct sum of $Q_1$ and $Q_2$, 
and $\otimes_s$ denotes the
$\mathbb F_2$-graded tensor product of 
$\mathbb F_2$-graded algebras.  

Here are some standard examples. The isomorphisms in the
proposition are isomorphisms of $\mathbb F_2$-graded algebras. 

\begin{proposition} \label{prop.ClEx}
\hfill 
\begin{enumerate}
\item Let $E$ be a quadratic \'etale algebra and let $a \in F^\times$. 
Then $C(aQ_E)$ is isomorphic to $D(E,a)$. 
\item Let $D$ be any quaternion algebra. Then $C(Q_D)$ 
is isomorphic to
the algebra $M_2(D)$ of $2\times2$-matrices 
with entries in $D$, graded
by taking the diagonal matrices as the even part, 
and the anti-diagonal matrices as the odd part.  
\end{enumerate}
\end{proposition}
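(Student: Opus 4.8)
The plan is to verify both isomorphisms by exhibiting explicit generators and checking the defining relations of the Clifford algebra, then matching them against the structural description of $D(E,a)$ and $M_2(D)$ given above. For part (1), I would start by choosing a basis $\{e_1, e_2\}$ of $E$ over $F$ adapted to the quadratic form $Q_E(y) = y\bar y$: concretely, either a basis in which $Q_E$ is diagonal (when the characteristic is not $2$), or, more uniformly, I would work directly with the element $1 \in E$ and an element $t \in E$ with $t + \bar t = 1$, so that $aQ_E$ restricted to the span of $1$ and $t$ has an explicit Gram matrix. The Clifford algebra $C(aQ_E)$ is then generated by the images of these two basis vectors subject to $v \cdot v = aQ_E(v)$ and the anticommutation relation coming from polarization. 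The key step is to identify the even part of $C(aQ_E)$ with $E$ itself — this is a standard fact (the even Clifford algebra of a binary quadratic form is the associated quadratic étale algebra, here twisted by $a$) — and then to locate an odd element $x$ with $x^2 = a$ and $xe = \bar e x$ for $e \in E$. That element $x$ is, up to scalar, the product of the two basis vectors, or one of the basis vectors itself, depending on the normalization; matching $x^2$ to $a$ uses the freedom in $a$ modulo norms from $E^\times$, exactly as in the stated dependence of the isomorphism class of $D(E,a)$. Once the even part is $E = D_0$ and the odd element satisfies (a) and (b), the presentation of $D(E,a)$ forces the isomorphism as $\mathbb F_2$-graded algebras.

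For part (2), I would use the standard doubling formula \eqref{eq.PlusTimes}: the quadratic form $Q_D$ on the $4$-dimensional space $D$ can be written as an orthogonal direct sum $Q_D = Q_{D_0} \oplus Q_{D_1'}$ where $D = D_0 \oplus D_0 x$ is the $\mathbb F_2$-grading of the quaternion algebra (viewing $D$ as $D(E,a)$ for a suitable $E$, $a$), so that $Q_D$ restricted to the even part $E$ is $Q_E$ and restricted to $Ex$ is $-aQ_E$ (the sign coming from the anti-involution acting by $-1$ on $D_1$, as recorded above). Then $C(Q_D) \cong C(Q_E) \otimes_s C(-aQ_E) \cong D(E,a) \otimes_s D(E,-a)$ by part (1). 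The remaining task — and I expect this to be the technically fussiest point — is to identify the graded tensor product $D(E,a) \otimes_s D(E,-a)$ with $M_2(D)$ carrying the diagonal/anti-diagonal grading. One can do this by a direct computation: the graded tensor product of a quaternion algebra with itself (or with its "sign twist") is split as an ungraded algebra, hence $\cong M_4(F)$ or $M_2(D)$ by dimension and Brauer-class considerations, and one checks that the grading induced from the tensor-product grading is conjugate to the diagonal/anti-diagonal one. Alternatively, and perhaps more cleanly, I would bypass the tensor product and directly construct the isomorphism $C(Q_D) \to M_2(D)$: send a vector $y \in D$ (with $Q_D(y) = y\bar y$) to the matrix $\begin{pmatrix} 0 & y \\ \bar y & 0 \end{pmatrix}$, which is anti-diagonal (odd) and squares to $\begin{pmatrix} y\bar y & 0 \\ 0 & \bar y y \end{pmatrix} = Q_D(y) \cdot I$ since $y\bar y = \bar y y$ in a quaternion algebra; the universal property of the Clifford algebra then yields an algebra map $C(Q_D) \to M_2(D)$ respecting the grading, and a dimension count ($2^4 = 16 = \dim M_2(D)$) plus checking injectivity (using that $C(Q_D)$ is central simple, so the map is automatically injective) finishes it.

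The main obstacle is getting the $\mathbb F_2$-gradings to match on the nose rather than merely up to an ungraded isomorphism — in particular, in part (2), making sure that the grading on $M_2(D)$ transported from the Clifford grading really is the diagonal/anti-diagonal one and not some conjugate of it. The explicit formula $y \mapsto \begin{pmatrix} 0 & y \\ \bar y & 0\end{pmatrix}$ makes this transparent, since the image of the generating subspace $D$ lands manifestly in the anti-diagonal part, which is declared to be $C(Q_D)_1$; the even subalgebra generated by products of pairs of such matrices then lands in the diagonal part. So I would favor the direct-construction route for part (2) over the graded-tensor-product route. Everything else is routine verification of Clifford relations.
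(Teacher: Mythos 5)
Your plan for part (2), in its preferred form, is essentially the paper's proof: send $y \in D$ to the anti-diagonal matrix with entries $y$ and $\bar y$ (the paper phrases this as $\phi_d(x,y)=(\bar d y, dx)$ on the graded module $M=D \oplus D$, but it is the same map), check that the square is $Q_D(y)\cdot I$ using $y\bar y = \bar y y$ central, and conclude by the universal property plus simplicity of $C(Q_D)$ and a dimension count. Your earlier idea of going through $Q_D = Q_E \oplus (-aQ_E)$ and the graded tensor product is also correct (that decomposition is exactly the second proposition in the appendix), but as you suspected, identifying $D(E,a)\otimes_s D(E,-a)$ with $M_2(D)$ carrying the right grading is messier than the direct map, and the paper avoids it.

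For part (1), you and the paper part ways, and it is worth noting the difference because the paper's route is strictly simpler. You propose to identify the even Clifford algebra $C_0(aQ_E)$ with $E$ and then hunt for an odd element $x$ with $x^2=a$ and $xe=\bar e x$. That can be made to work, but it requires (i) proving the even-Clifford-algebra-is-$E$ fact uniformly in all characteristics, (ii) carefully pinning down the identification so that $\bar z$ in $C_0$ matches $\bar e$ in $E$, and (iii) a choice of $x$ for which $x^2=a$ on the nose rather than $a$ times a norm (your remark about "freedom modulo norms" is a symptom of not having fixed this; the clean choice is $x = 1_E \in E \subset C(aQ_E)_1$, for which $Q_E(1)=1$ gives $x^2=a$ outright, with no norm correction needed). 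The paper instead uses the same universal-property trick it uses for (2): define $\phi : E \to D(E,a)_1$ by $\phi(e) = ex$, observe $(ex)^2 = e(xe)x = e\bar e x^2 = aQ_E(e)$, and invoke simplicity of $C(aQ_E)$ together with equality of dimensions. This goes $C(aQ_E) \to D(E,a)$ rather than trying to reconstruct $D(E,a)$ inside $C(aQ_E)$, and it sidesteps the even-part identification entirely, which is what makes it robust in characteristic $2$. Your plan is not wrong, but if you carry it out you should drop the "modulo norms" hedge, commit to $e_1 = 1_E$, and expect to do some genuine work at step (ii); or better, notice that the recognition principle you already used for (2) — a graded isomorphism out of $C(Q)$ is just a linear map $V \to A_1$ squaring correctly — applies equally well here.
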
 
\begin{proof} In this proof 
we will use the following  way of 
recognizing the Clifford algebra $C(Q)$ obtained from an even-dimensional 
quadratic space
$(V,Q)$.   Let
$A=A_0 \oplus A_1$ be an $\mathbb F_2$-graded algebra having the same
dimension as $C(Q)$. Then giving an isomorphism $\rho:C(Q) \to A$ of
$\mathbb F_2$-graded algebras is the same as giving a linear map $\phi:V
\to A_1$ such that $(\phi(v))^2=Q(v)$ for all $v \in V$. 
(The simplicity of the algebra $C(Q)$ guarantees  
that $\rho$ is injective, hence an isomorphism.) 

First we prove (1). We abbreviate $D(E,a)$ to $D$. Giving an isomorphism
$\rho:C(aQ_E)\to D$ is the same as giving a linear map $\phi:E \to D_1$
such that $\phi(e)^2=ae\bar e$.  Taking 
$\phi:E \to D_1$ to be 
\[
e  \mapsto  ex \in Ex=D_1 
\] 
does the job. 
Here we are using  the description of  $D(E,a)$ given earlier. 

Now we prove (2).   
Identify $M_2(D)$ with
the $\mathbb F_2$-graded endomorphism algebra $\End_D(M)$ of
the $\mathbb F_2$-graded right $D$-module $M=D \oplus D$, the first copy
of $D$ being given even degree, and the second copy being given odd
degree. 
Giving $\rho$ is then the same as giving  
 an $F$-linear map $d \mapsto \phi_d$
from $D$ to the odd part of $\End_D(M)$, subject to the condition 
$(\phi_d)^2=d\bar d$.   Taking 
\[
\phi_d(x,y):=(\bar d y, dx) \qquad (x,y \in D)
\] 
does the job. 
\end{proof} 

A related fact is 

\begin{proposition}
Let $E$ be a quadratic \'etale algebra and let $a \in F^\times$. 
Put $D=D(E,a)$.
Then $Q_D$ is the orthogonal direct sum of $Q_E$ and 
$-aQ_E$. Consequently there is an equality 
\[
[Q_D]=[Q_E]-[aQ_E]
\] 
in the even Witt group. 
\end{proposition}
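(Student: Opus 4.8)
The plan is to exhibit an explicit orthogonal decomposition $D = E \oplus Ex$ of the quadratic space $(D, Q_D)$ and to identify the restriction of $Q_D$ to each summand. First I would recall from the description of $D = D(E,a)$ that every element of $D$ is uniquely of the form $e_0 + e_1 x$ with $e_0, e_1 \in E$, and that the canonical anti-involution acts on $D_0 = E$ by $e \mapsto \bar e$ and on $D_1 = Ex$ by multiplication by $-1$. Thus $\overline{e_0 + e_1 x} = \bar e_0 - e_1 x$, using also that $x \bar e = e x$ so that $\overline{e_1 x} = \bar x \bar e_1 = -x\bar e_1 = -e_1 x$. I would then compute $Q_D(e_0 + e_1 x) = (e_0 + e_1 x)(\bar e_0 - e_1 x)$ by expanding, using $x e = \bar e x$, $x^2 = a$, and $e_0 \bar e_0 = Q_E(e_0)$.

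The key computation: expanding the product gives $e_0 \bar e_0 - e_0 e_1 x + e_1 x \bar e_0 - e_1 x e_1 x$. The middle two terms are $-e_0 e_1 x + e_1 \bar{\bar e_0} x$; since $x \bar e_0 = \bar{\bar e_0} x = e_0 x$, wait — more carefully, $x \bar e_0 = \overline{(\bar e_0)} \, x = e_0 x$, so $e_1 x \bar e_0 = e_1 e_0 x = e_0 e_1 x$ (as $E$ is commutative), and the two middle terms cancel. The last term is $-e_1 (x e_1) x = -e_1 (\bar e_1 x) x = -e_1 \bar e_1 x^2 = -a\, Q_E(e_1)$. Hence $Q_D(e_0 + e_1 x) = Q_E(e_0) - a\, Q_E(e_1)$, which is exactly the orthogonal direct sum of $(E, Q_E)$ and $(E, -aQ_E)$. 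The Witt-class identity $[Q_D] = [Q_E] - [aQ_E]$ then follows since $-aQ_E$ has Witt class $-[aQ_E]$ in the even Witt group and orthogonal sum corresponds to addition of Witt classes (both spaces being even-dimensional).

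I do not expect a serious obstacle here; the only thing requiring care is the bookkeeping with the anti-involution and the twisted commutation rule $xe = \bar e x$, specifically getting the signs right in $\overline{e_1 x} = -e_1 x$ and in the cancellation of the cross terms. One should also note at the outset that $Q_D$ is a genuine quadratic form (not merely the norm of a possibly-degenerate form): since $D$ is a quaternion algebra, $Q_D$ is nondegenerate, consistent with both $Q_E$ and $aQ_E$ being nondegenerate binary forms.
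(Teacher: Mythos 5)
Your proposal is correct and matches the paper's approach: the paper also works with the decomposition $D = D_0 \oplus D_1 = E \oplus Ex$ and simply states (without writing out the computation) that this decomposition is orthogonal, with $D_0$ and $D_1$ isomorphic as quadratic spaces to $(E,Q_E)$ and $(E,-aQ_E)$ respectively. Your explicit expansion of $Q_D(e_0 + e_1 x) = (e_0 + e_1 x)(\bar e_0 - e_1 x) = Q_E(e_0) - a\,Q_E(e_1)$, using $xe = \bar e x$ and $x^2 = a$, is exactly the ``easy to see'' verification the paper leaves to the reader.
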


\begin{proof} 
As before we have $D=D_0 \oplus D_1=E\oplus Ex$. 
It is easy to see that the direct sum decomposition $D=D_0 \oplus D_1$ is
orthogonal, and that $D_0$ (resp., $D_1$) is
isomorphic as quadratic space to $(E,Q_E)$ (resp., $(E,-aQ_E)$). 
\end{proof} 

\subsection{Wall's little graded Brauer group} 

The reference for this subsection is Wall's paper \cite{Wall}. 
Wall's little graded Brauer
group 
 is an analog of the usual Brauer group $\Br(F)$. It is
based on the $\mathbb F_2$-graded tensor product of $\mathbb F_2$-graded
central simple algebras, just as the usual Brauer group is based on the
ordinary tensor product of central simple algebras. The word ``little''
refers to the fact that one only considers $\mathbb F_2$-graded algebras
whose underlying algebra is central simple in the usual sense.

Wall has determined the structure of the little graded Brauer group, which
we denote by
$\Br_s(F)$. (We consistently use a subscript $s$ to indicate ``super'' analogs of
standard concepts.)
He did this by attaching two invariants to an 
 $\mathbb F_2$-graded central simple algebra $A=A_0 \oplus A_1$. 
 The first
is the sign character $\chi_{E}$ associated to the quadratic \'etale
algebra $E=E(A)$ defined as the center of $A_0$. The second is the
class $[A]$ of $A$ in the ordinary Brauer group $\Br(F)$. We will use
additive notation for $\Br(F)=H^2(F,\overline F^\times)$. Likewise, 
we use additive notation in
$H^1(F,\mathbb F_2)$, which we identify with the group of (continuous) sign
characters on $\Gamma$. 
\begin{proposition}[Wall] 
\hfill 
\begin{enumerate}
\item The map
\[
A \mapsto (\chi_{E(A)},[A])
\]
induces a bijection from $\Br_s(F)$ to the Cartesian product 
\[
H^1(F,\mathbb F_2) \times \Br(F). 
\]
\item Use the isomorphism in the first part to put the structure of abelian
group on $H^1(F,\mathbb
F_2) \times \Br(F)$. The addition law is then
given by 
\[
(\chi,x)+(\chi',x')=(\chi+\chi',x+x'+\chi\cup\chi'), 
\] 
where $\cup$ is the cup-product pairing 
\[
H^1(F,\mathbb F_2) \otimes
H^1(F,\mathbb F_2) \to H^2(F,\bar F^\times)
\] 
induced by the pairing
$\mathbb F_2 \otimes \mathbb F_2 \to \overline F^\times$ given by 
$m \otimes n \mapsto (-1_F)^{mn}$. 
\end{enumerate}
\end{proposition}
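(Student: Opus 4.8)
The plan is to analyze an $\mathbb{F}_2$-graded central simple algebra $A = A_0 \oplus A_1$ over $F$ by hand: bring it, up to graded Brauer equivalence, into the explicit normal form $D(E,a) \otimes_s B$ with $E = E(A)$ quadratic \'etale, $a \in F^\times$, and $B$ an ordinary (trivially graded) central simple algebra, and then read off both the bijection of part (1) and the addition law of part (2) from this normal form together with Proposition \ref{prop.ClEx} and \eqref{eq.PlusTimes}.

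First I would check that $A \mapsto (\chi_{E(A)},[A])$ is well defined on $\Br_s(F)$. The ordinary Brauer class is plainly unchanged when $A$ is replaced by $A \otimes_s \End_s(M)$ for a graded module $M$; for the first coordinate one computes that the centre of the even part of $\End_s(M)$ is $F$ or $F \times F$, which contributes the trivial sign character, and, more generally, that the centre of the even part $A_0 \otimes A_0' \oplus A_1 \otimes A_1'$ of $A \otimes_s A'$ is the quadratic \'etale algebra attached to $\chi_{E(A)}+\chi_{E(A')}$. This ``additivity of the $\chi$'s'' is a direct calculation, consistent with the well-known computation of the centre of the even Clifford algebra of an orthogonal sum.

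The structural heart, and the step I expect to be the main obstacle, is the existence of the normal form. One must prove $\dim_F E(A) \le 2$ and then split off the odd direction. In characteristic $\ne 2$ the grading automorphism $\theta$ (equal to $+1$ on $A_0$ and $-1$ on $A_1$) is inner by Skolem--Noether, say $\theta = \Int(u)$ with $u$ homogeneous and $u^2 \in F^\times$; peeling off the $2$-dimensional subalgebra generated by $u$ leaves an ungraded central simple factor. In characteristic $2$ the automorphism $\theta$ need not see the grading, so one argues directly with the semisimple algebra $A_0$, its centre $E(A)$, and the fact that $A_1$ is an invertible $A_0$-bimodule squaring into $A_0$; this still forces $\dim_F E(A) \le 2$ and, after passing to a graded-Morita equivalent algebra, produces a homogeneous unit playing the role of the element $x$ in the description of $D(E,a)$. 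Essentially all the work of the proposition lives in this reduction.

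Granting the normal form, the remainder is bookkeeping. Surjectivity is immediate: given $(\chi,x)$, choose quadratic \'etale $E$ with $\chi_E=\chi$ and ordinary central simple $B$ with $[B]=x$; then $D(E,1)\otimes_s B$ maps to $(\chi,x)$, since $D(E,1)=C(Q_E)$ is split and $B$ is trivially graded. For injectivity, $\chi_{E(A)}=0$ forces $E(A)\cong F\times F$, so in the normal form $D(E,a)$ is the Clifford algebra of a hyperbolic plane, hence graded-split, leaving $A \sim_s B$ with $[B]=[A]=0$, i.e.\ $A$ graded-trivial. Finally, to identify the group law it suffices to verify $[A\otimes_s A']=[A]+[A']+\chi_{E(A)}\cup\chi_{E(A')}$ on the generators $D(E,a)$ and ordinary $B$ (the additivity of the $\chi$'s already being in hand): when one factor is trivially graded there are no Koszul signs and the formula is trivial with both cup-product terms vanishing, while $D(E,a)\otimes_s D(E',a') = C(aQ_E\oplus a'Q_{E'})$ by \eqref{eq.PlusTimes}, so the claim reduces to the standard behaviour of the Clifford (Hasse--Witt) invariant under an orthogonal sum of two binary forms, whose correction term is precisely the cup product of their discriminants, namely $\chi_E\cup\chi_{E'}$. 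Together with $\chi\cup 0=0$ this matches Wall's addition law and finishes the proof.
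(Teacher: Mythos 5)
The paper does not prove this proposition; it attributes it to Wall and cites his 1963/64 paper, so there is no proof in the paper to compare against. Your plan has the right overall shape for a reconstruction of Wall's argument---establish a normal form $D(E,a)\otimes_s B$ with $B$ purely even, then read off the two invariants and verify the group law on the generators $D(E,a)$ and $B$ via Proposition~\ref{prop.ClEx} and \eqref{eq.PlusTimes}---and the surjectivity, injectivity, and group-law computations you sketch are all fine \emph{once the normal form is granted}.

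But the normal-form step, which you rightly identify as where the substance lies, contains a concrete error in characteristic $\ne 2$. From $\theta=\Int(u)$ and $\theta(u)=uuu^{-1}=u$ one sees that $u$ lies in $A_0$; in fact $F[u]$ is exactly $E(A)=Z(A_0)$, which is commutative. You cannot split a commutative subalgebra off a central simple $F$-algebra by the double centralizer theorem, so ``peeling off the $2$-dimensional subalgebra generated by $u$'' does not produce the normal form. What the argument really needs is an \emph{odd} homogeneous unit $x$ with $x^2\in F^\times$. Given such an $x$, the subalgebra $F[u,x]\cong D(E,a)$ (with $a=x^2$) is a quaternion algebra, its centralizer $B:=C_A(F[u,x])$ is automatically concentrated in degree zero (an odd element of $C_A(u)$ both commutes and anticommutes with $u$, hence vanishes when $2$ is invertible), and then $A\cong D(E,a)\otimes B$ as graded algebras. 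Producing $x$ is the real work: for any odd unit $x_0$ one has $A_1=A_0x_0$ and $\Int(x_0)$ restricts to the nontrivial involution of $E$, but a priori $x_0^2$ lands only in $C_{A_0}(E)=A_0$ rather than in $F^\times$, and $x_0$ must still be normalized to fix this. None of that normalization appears in your sketch, and the characteristic-$2$ paragraph is, by your own admission, a placeholder rather than an argument. Until these points are filled in the proof is incomplete.
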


Observe that the pairings occurring in this proposition are trivial when
the characteristic of $F$ is $2$. In this case, the little graded
Brauer group is isomorphic as abelian group to the cartesian product 
$H^1(F,\mathbb F_2) \times \Br(F)$.

We use the bijection in the proposition to identify $\Br_s(F)$ with 
$H^1(F,\mathbb
F_2) \times \Br(F)$ set-theoretically. Thus, given an $\mathbb F_2$-graded
central simple algebra $A$, we view the class $[A]_s$ of $A$ in $\Br_s(F)$
as being the pair $(\chi_{E(A)},[A])$. 

Notice that the cup-product $\chi\cup\chi'$
occurring in the formula for  the addition law always  lies in the 
subgroup $\Br_2(F):=\{ x \in \Br(F): 2x=0 \}$ of the Brauer group. 
Therefore $\Br_2(F)_s:=\{(\chi,x) \in H^1(F,\mathbb F_2) \times
\Br(F): 2x=0 \}$ is a subgroup of the little graded Brauer group. 

\begin{remark}
Let $(\chi,x) \in \Br_s(F)$. Then its inverse with respect to the group law is
$(\chi,x + \chi\cup\chi )$.
\end{remark}

\subsection{Wall homomorphism}
Equation \eqref{eq.PlusTimes} implies that 
$Q \mapsto [C(Q)]_s$ induces a
homomorphism from the even Grothendieck-Witt group to the little graded
Brauer group $\Br_s(F)$. 
As an immediate consequence of Proposition \ref{prop.ClEx}, 
one has 

\begin{proposition}\label{prop.WaEx}
\hfill 
\begin{enumerate}
\item For any quadratic \'etale algebra $E$ and any $a \in F^\times$ 
\[
[C(aQ_E)]_s=(\chi_E,[D(E,a)]). 
\] 
\item For any quaternion algebra $D$ 
\[
[C(Q_D)]_s=(0,[D]). 
\]
\end{enumerate}
\end{proposition}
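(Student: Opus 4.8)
The plan is to read off both statements directly from Proposition \ref{prop.ClEx} together with Wall's description of $\Br_s(F)$ as the set $H^1(F,\mathbb F_2) \times \Br(F)$, under which the class $[A]_s$ of an $\mathbb F_2$-graded central simple algebra $A = A_0 \oplus A_1$ is the pair $(\chi_{E(A)},[A])$, where $E(A)$ is the center of the even part $A_0$ and $[A]$ is the class of the underlying ungraded algebra in $\Br(F)$. So in each case the only work is to identify the quadratic \'etale algebra $E(C(Q))$ and the ordinary Brauer class $[C(Q)]$.

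For part (1), Proposition \ref{prop.ClEx}(1) gives an isomorphism of $\mathbb F_2$-graded algebras $C(aQ_E) \cong D(E,a)$. The even part of $D(E,a)$ is $D_0 = E$, which is commutative, hence equal to its own center; thus $E(C(aQ_E)) = E$ and the first coordinate of $[C(aQ_E)]_s$ is $\chi_E$. The second coordinate is by definition the class of the underlying algebra, namely $[D(E,a)]$ in $\Br(F)$. This gives $[C(aQ_E)]_s = (\chi_E,[D(E,a)])$.

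For part (2), Proposition \ref{prop.ClEx}(2) gives $C(Q_D) \cong M_2(D)$, graded so that the diagonal matrices form the even part. That even part is isomorphic to $D \times D$, and since $D$ is central over $F$ its center is $F \times F$; hence $E(C(Q_D)) \cong F \times F$ and $\chi_{E(C(Q_D))} = \chi_{F \times F} = 0$. Morita equivalence gives $[M_2(D)] = [D]$ in the ordinary Brauer group, so $[C(Q_D)]_s = (0,[D])$.

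There is essentially no obstacle here: the whole content is the computation of the center of the even part of the graded Clifford algebra in each of the two model cases, and Proposition \ref{prop.ClEx} has already done the substantive work of identifying these Clifford algebras explicitly. The one point meriting a word of care is that in part (2) one must use the specified grading on $M_2(D)$ (diagonal even, anti-diagonal odd) in order to see that the even part is $D \times D$; with a different grading the first coordinate would change.
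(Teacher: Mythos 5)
Your proof is correct and is precisely the argument the paper has in mind: the paper states the proposition as ``an immediate consequence of Proposition \ref{prop.ClEx},'' and your computation of the center of the even part in each case, together with Morita invariance for part (2), is exactly how that consequence is drawn.
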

Here, in accordance with our convention of using additive notation, $0$
stands for the trivial sign character on $\Gamma$. Taking $E=F\times F$ in
the first part of the proposition, one sees that
$[C(Q)]_s$ is trivial when $Q$ is a hyperbolic plane. Therefore 
$Q \mapsto [C(Q)]_s$ induces a
homomorphism 
\[
W_0(F) \to \Br_s(F). 
\] 
This homomorphism was studied by Wall; we will 
 denote it by  
\[
\Wall:W_0(F) \to \Br_s(F). 
\] 
Concretely, $\Wall(Q)$ is a pair $(\Wall_1(Q),\Wall_2(Q))$ with
$\Wall_1(Q) \in H^1(F,\mathbb F_2)$ and $\Wall_2(Q) \in \Br(F)$. 
The image of the Wall homomorphism $W_0(F) \to \Br_s(F)$ is equal to
the subgroup $\Br_2(F)_s$.

\subsection{$\Wall(aQ)$ versus $\Wall(Q)$} \label{sub.WaQWQ}

 We denote  by  $\{\cdot,\cdot\}$  the pairing  
\[
 H^1(F,\mathbb F_2) \otimes 
\bigl( F^\times/(F^\times)^2 \bigr)  \to \Br_2(F)
\]
 in Chapter 14 of \cite{Serre}. The class of $D(E, a)$ 
 in the Brauer group is then $\{\chi_E,a\}$. 
 The pairing $\{\cdot,\cdot\}$ is 
 reviewed in greater detail in subsection  \ref{sub.LocSymbol}. 

We have the following corollary of Proposition \ref{prop.WaEx} (1). 
In the corollary we view $\Br_2(F)$ as a subgroup of 
$\Br_2(F)_s$ via the injection $x \mapsto (0,x)$.  

\begin{lemma}\label{cor.WaSC}
For any even dimensional quadratic space $(V,Q)$  and any $a \in F^\times$
there is an equality 
\[
\Wall(aQ)-\Wall(Q)=\{\chi_Q,a \} \in \Br_2(F), 
\] 
where $\chi_Q$ is the sign character $\Wall_1(Q)$. Concretely, this means
that \begin{align*}
\Wall_1(aQ)&=\Wall_1(Q),  \\  
\Wall_2(aQ)&=\Wall_2(Q)+
\{\chi_Q,a \}.
\end{align*} 
\end{lemma}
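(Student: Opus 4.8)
The statement to prove is Lemma \ref{cor.WaSC}, which compares $\Wall(aQ)$ with $\Wall(Q)$. The natural approach is to reduce to a one-dimensional-at-a-time computation using additivity of $\Wall$, and to compute the building blocks via Proposition \ref{prop.WaEx}(1). First I would reduce to the case where $(V,Q)$ is a single ``plane'' of the form $(E,cQ_E)$ for a quadratic \'etale algebra $E$ and $c \in F^\times$: every nondegenerate even-dimensional quadratic space is Witt-equivalent to an orthogonal sum of such planes (in characteristic $2$ these are the ``binary'' nondegenerate forms; in odd characteristic one groups the diagonal entries in pairs, noting that $\langle c\rangle \oplus \langle d\rangle \cong cQ_E$ for an appropriate $E$ depending on $cd$). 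Since $\Wall$ is a homomorphism on $W_0(F)$ and $aQ$ decomposes compatibly with $Q$, it suffices to treat one plane at a time, \emph{provided} one is careful that $\chi_Q = \Wall_1(Q)$ behaves additively — which it does, being the first component of a homomorphism into $\Br_s(F)$.

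\textbf{Key steps.} For a single plane $Q = cQ_E$, Proposition \ref{prop.WaEx}(1) gives $\Wall(cQ_E) = (\chi_E, [D(E,c)]) = (\chi_E, \{\chi_E, c\})$, using the identification of $[D(E,a)]$ with $\{\chi_E, a\}$ recalled in subsection \ref{sub.WaQWQ}. Likewise $\Wall(acQ_E) = (\chi_E, \{\chi_E, ac\})$. Now I would subtract these in $\Br_s(F)$: since both elements have the same first component $\chi_E$, their difference in the group $\Br_s(F)$ (with Wall's twisted addition law) has trivial first component, so it lies in the copy of $\Br_2(F)$ embedded via $x \mapsto (0,x)$, and the second component is $\{\chi_E, ac\} - \{\chi_E, c\}$. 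Bilinearity of the symbol $\{\cdot,\cdot\}$ — i.e.\ $\{\chi_E, ac\} = \{\chi_E,a\} + \{\chi_E,c\}$, valid since these symbols land in $\Br_2(F)$ — collapses this to $\{\chi_E, a\}$. Finally $\chi_E = \Wall_1(cQ_E) = \chi_Q$, giving $\Wall(aQ) - \Wall(Q) = \{\chi_Q, a\}$. The concrete reformulation into the two component equations $\Wall_1(aQ) = \Wall_1(Q)$ and $\Wall_2(aQ) = \Wall_2(Q) + \{\chi_Q, a\}$ is then immediate from unwinding the group law of $\Br_s(F)$ on elements sharing a first component.

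\textbf{Main obstacle.} The routine parts are the symbol bilinearity and the plane-by-plane bookkeeping. The one genuine subtlety is the reduction to binary forms \emph{while keeping track of the first invariant}: in characteristic $2$ the decomposition of a nondegenerate quadratic space into binary pieces is standard, but in odd characteristic one must check that diagonalizing $Q$ as $\langle c_1,\dots,c_{2n}\rangle$ and pairing consecutive entries produces planes $c_{2i-1}Q_{E_i}$ whose $E_i$ genuinely depends only on the discriminant $c_{2i-1}c_{2i}$, and that the resulting total first invariant $\sum_i \chi_{E_i}$ is exactly $\Wall_1(Q)$. Alternatively — and this is cleaner — one can bypass the explicit decomposition entirely: observe that both sides of the asserted identity are homomorphisms in $Q$ (for fixed $a$) from $W_0(F)$ to $\Br_s(F)$ once we know $\chi_{aQ} = \chi_Q$, so it suffices to verify the formula on a set of Witt-group generators, for which the binary forms $cQ_E$ suffice and the computation above applies verbatim. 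I would present the proof in this second, generator-based form to sidestep the characteristic-$2$ versus odd-characteristic case split.
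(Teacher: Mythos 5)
Your proof is correct and follows essentially the same route as the paper's: reduce via additivity of $\Wall$ to the binary forms $cQ_E$ that generate the Witt group, apply Proposition \ref{prop.WaEx}(1) together with $[D(E,c)] = \{\chi_E,c\}$, and use bilinearity of the local symbol to extract $\{\chi_E,a\}$. The paper's proof is a one-sentence appeal to exactly this decomposition; you have merely written out the symbol arithmetic and the group-law bookkeeping in $\Br_s(F)$ that it leaves implicit, and your generator-based reformulation is a harmless reorganization of the same argument.
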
 

\begin{proof}
This follows from Proposition \ref{prop.WaEx} (1), because any even
dimensional quadratic space is an orthogonal direct sum of ones of the
form $bQ_E$ ($E$ quadratic \'etale, $b \in F^\times$). 
\end{proof}

\section{Review of the Clifford group $Cl(Q)$ of $Q$} 
We consider a nondegenerate 
quadratic space $(V,Q)$ over a field $F$.  We write $C=C(Q)$
for the Clifford algebra of
$(V,Q)$. It is a graded (i.e., $\mathbb F_2$-graded) algebra, and contains
$V$ as a  linear subspace of $C_1$. There is a canonical 
anti-involution $x \mapsto x_t$ on $C$ such that  
$(v_1v_2\dots v_m)_t := v_m\dots v_2v_1$ for  
$v_1,v_2,\dots, v_m \in V$. 

We write $ C^{\times_s}$ for the group of invertible homogeneous elements in $C$. 
For $x \in C^{\times_s}$ we define an automorphism $\Int_s(x)$ of the graded
algebra $C$ by
putting 
\[
\Int_s(x)(y) := (-1)^{\deg(x)\deg(y)} xyx^{-1}
\]
for all homogenous $y \in C$. There is a short exact sequence 
\[
1 \to F^\times \to C^{\times_s} \xrightarrow{\Int_s} \Aut_s(C) \to 1, 
\]
where $\Aut_s(C)$ denotes the group of automorphisms of the graded algebra $C$.

By functoriality any element $u$ in the orthogonal group $O(Q)$ induces an
automorphism $u_*$ of the graded algebra $C$, and $u_*$ preserves the
subspace $V$ of $C_1$. It is easy to see that $u \mapsto u_*$ yields
an isomorphism 
\[
O(Q) \to \{\theta \in \Aut_s(C) \, : \, \theta(V)=V  \}, 
\]
with inverse $\theta \mapsto \theta|_V$.

The Clifford group $Cl(Q)$ is defined to be 
$\{ x \in C^{\times_s} \, : \, \Int_s(x) (V)=V \}$. 
There is a canonical
homomorphism $\rho:Cl(Q) \to O(Q)$; it sends $x \in Cl(Q)$ to
$\Int_s(x)|_V$. The homomorphism $\rho$ is surjective, so there is an exact
sequence
\begin{equation}\label{SES.ClO}
1 \to \mathbb G_m \to Cl(Q) \xrightarrow{\rho} O(Q) \to 1
\end{equation}
of algebraic groups over $F$.  
The restriction of $\deg:C^{\times_s} \to \mathbb F_2$ to $Cl(Q)$ is
trivial on
$\mathbb G_m$, and hence induces a homomorphism of algebraic groups 
\begin{equation}
\deg:O(Q) \to \mathbb F_2,
\end{equation}
whose kernel is the special orthogonal group $SO(Q)$. Recall that $Cl(Q)$
contains $\{v \in V : Q(v) \ne 0 \}$, and that for $v$ in this subset
$\rho(v)$ is equal to reflection $r_v$ in $v$. 

There is a canonical homomorphism 
\begin{equation}\label{eq.DefOfN}
N:Cl(Q) \to \mathbb G_m
\end{equation} 
defined by $N(x)=x_tx$, where $x\mapsto x_t$ is the 
anti-involution defined near the beginning of this subsection. 
The restriction of $N$ to $\mathbb G_m 
\subset  Cl(Q)$ is the squaring map. 

\section{Wall invariants versus Hasse-Witt invariants}
We work over an arbitrary field $F$, with separable closure $\overline{F}$
and absolute Galois group $\Gamma$. In this appendix we again tacitly assume 
that all quadratic spaces we consider are nondegenerate. 

\subsection{Relative Hasse-Witt invariants} 
Let $n$ be a positive integer. Consider two $n$-dimensional
quadratic spaces $(V,Q)$ and $(V',Q')$. The difference between them is
measured by an element 
\[
\inv(Q',Q) \in H^1(F,O(Q)).
\] 
One obtains a specific 
$1$-cocycle representing $\inv(Q',Q)$ by choosing an
$\overline{F}$-isomorphism $u:(V,Q) \to (V',Q')$ and then putting 
$a_\sigma:= u^{-1}\sigma(u)$. 

We write 
\[ 
\partial:H^1(F,O(Q)) \to H^2(F,\mathbb G_m)
\]
for the connecting 
map for the short exact sequence \eqref{SES.ClO}. 
Recall from equation \eqref{eq.DefOfN} the 
homomorphism $N: Cl(Q) \to \mathbb G_m$, 
which restricts to the squaring map on $\mathbb G_m \subset
Cl(Q)$. Using it, one shows easily that the image of $\partial$ is contained in the subgroup $\Br_2(F)$ of the Brauer group.  

We write
\[ 
\deg_*:H^1(F,O(Q)) \to H^1(F,\mathbb F_2)
\]
 for the map induced by the
homomorphism $\deg:O(Q) \to \mathbb F_2$. 

\begin{definition}
The relative Hasse-Witt invariants of $(Q',Q)$ are defined as follows. 
\begin{enumerate}
\item $HW_1(Q',Q):=\deg_*\bigl(\inv(Q',Q)\bigr) \in H^1(F,\mathbb F_2)$, 
\item $HW_2(Q',Q):=\partial\bigl(\inv(Q',Q)\bigr) \in \Br_2(F)$, 
\item $HW(Q',Q):=\bigl(HW_1(Q',Q),HW_2(Q',Q)\bigr) \in\Br_2(F)_s$. 
\end{enumerate}
\end{definition}

The following facts are helpful when one needs to manipulate 
Hasse-Witt invariants. 

\begin{lemma}\label{lem.HWvs Wall}
For any  three quadratic spaces $(V,Q)$, $(V',Q')$, $(V'',Q'')$  
of the same  dimension  there are equalities 
 \begin{enumerate}
\item
$HW(Q'',Q)=HW(Q'',Q')+HW(Q',Q)$, 
\item
$HW(Q',Q)=\Wall(Q)-\Wall(Q')$,
\end{enumerate}
in the abelian group $\Br_2(F)_s$. 
\end{lemma}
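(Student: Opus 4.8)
The plan is to prove part (2) first and then get part (1) for free from it. Indeed, once we know $HW(Q',Q)=\Wall(Q)-\Wall(Q')$, part (1) is just the telescoping identity
\[
\Wall(Q)-\Wall(Q'')=\bigl(\Wall(Q)-\Wall(Q')\bigr)+\bigl(\Wall(Q')-\Wall(Q'')\bigr)
\]
in the abelian group $\Br_2(F)_s$. So the whole content is part (2).

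To prove part (2) I would fix an $\overline F$-isometry $u\colon (V,Q)\to(V',Q')$, so that $\inv(Q',Q)\in H^1(F,O(Q))$ is represented by the cocycle $a_\sigma:=u^{-1}\sigma(u)$. The key observation is that the functorially induced graded-algebra isomorphism $u_*\colon C(Q)_{\overline F}\to C(Q')_{\overline F}$ exhibits $C(Q')$ as the twist of $C(Q)$ by the cocycle $\sigma\mapsto (a_\sigma)_*\in\Aut_s(C(Q))(\overline F)$, i.e.\ by the image of $\inv(Q',Q)$ under the embedding $O(Q)\hookrightarrow\Aut_s(C(Q))$, $v\mapsto v_*$. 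Granting this, I would push $\inv(Q',Q)$ forward along the commutative ladder of short exact sequences of $F$-groups
\[
\begin{CD}
1 @>>> \mathbb G_m @>>> Cl(Q) @>{\rho}>> O(Q) @>>> 1 \\
@. @| @VVV @VVV @. \\
1 @>>> \mathbb G_m @>>> C(Q)^{\times_s} @>{\Int_s}>> \Aut_s(C(Q)) @>>> 1,
\end{CD}
\]
whose middle vertical map is the inclusion of the Clifford group into the group of homogeneous units. By functoriality of connecting maps, $HW_2(Q',Q)=\partial(\inv(Q',Q))$ equals the image of $\inv(Q',Q)$ under ``push to $H^1(F,\Aut_s C(Q))$, then apply the connecting map of the bottom row''. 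That connecting map is the graded analogue of the standard one sending a twisted form of a central simple algebra to the difference of Brauer classes, so $HW_2(Q',Q)=[C(Q)]-[C(Q')]=\Wall_2(Q)-\Wall_2(Q')$ in $\Br(F)$. Likewise, pushing along $\deg\colon O(Q)\to\mathbb F_2$ (compatible with the corresponding $\deg$ on $\Aut_s$) identifies $HW_1(Q',Q)=\deg_*(\inv(Q',Q))$ with the cocycle difference $\chi_{E(C(Q))}-\chi_{E(C(Q'))}=\Wall_1(Q)-\Wall_1(Q')$ of the two discriminant \'etale algebras. Assembling the two components in $\Br_2(F)_s$ finishes part (2), and part (1) follows.

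The main obstacle is bookkeeping rather than anything conceptual: one must track the $\mathbb F_2$-grading carefully — in particular the sign $(-1)^{\deg(x)\deg(y)}$ built into $\Int_s$ — and must reconcile two sign conventions, namely the one relating $\inv(Q',Q)$ (defined using $u\colon(V,Q)\to(V',Q'))$ to the direction in which $C(Q')$ is a twist of $C(Q)$, and the one hidden in the connecting map $\partial$, so that the answer comes out as $\Wall(Q)-\Wall(Q')$ and not its inverse. One must also keep in mind that the group law on $\Br_2(F)_s$ carries the cup-product correction term of the Remark, so that matching the two components really does produce the claimed equality.

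As a reassurance — and as an alternative route — part (2) can be verified directly on the generating case of a binary form $bQ_E$ with $E$ quadratic \'etale and $b\in F^\times$: there $C(bQ_E)\cong D(E,b)$ by Proposition \ref{prop.ClEx}(1) and $\Wall(bQ_E)=(\chi_E,[D(E,b)])$ by Proposition \ref{prop.WaEx}(1), and a short computation comparing $HW(b'Q_E,bQ_E)$ with $\Wall(bQ_E)-\Wall(b'Q_E)$ confirms the formula. In that organization one would instead prove part (1) first, by composing $\overline F$-isometries (the $\deg$ maps give additivity of $HW_1$ on the nose, and a two-cocycle computation with Clifford-group lifts gives additivity of $HW_2$ up to exactly the cup-product term demanded by the group law), and then reduce part (2) to the binary case via part (1).
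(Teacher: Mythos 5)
The paper states this lemma without proof (the appendices are declared to be ``largely review''), so there is no argument to compare against; I can only assess whether your reasoning is sound.

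Your strategy of pushing $\inv(Q',Q)$ through the graded short exact sequence $1\to\mathbb G_m\to C(Q)^{\times_s}\to\Aut_s(C(Q))\to 1$ is a plausible organizing idea, but the key intermediate claim
\[
HW_2(Q',Q)=[C(Q)]-[C(Q')]=\Wall_2(Q)-\Wall_2(Q') \quad \text{in } \Br(F)
\]
is simply false. Counterexample: take $F=\mathbb R$, $Q$ a hyperbolic plane, $Q'=x^2+y^2$. Both Clifford algebras are split, so $\Wall_2(Q)=\Wall_2(Q')=0$ and your formula forces $HW_2(Q',Q)=0$. But $SW(Q)=(-1,0)$ while $SW(Q')=(1,0)=(0,0)$, and Corollary \ref{lem.ClassConversion} gives
\[
HW(Q',Q)=SW(Q')-SW(Q)=-(-1,0)=\bigl(\chi_{\mathbb C},\,\chi_{\mathbb C}\cup\chi_{\mathbb C}\bigr)=(\chi_{\mathbb C},[\mathbb H]),
\]
so $HW_2(Q',Q)=[\mathbb H]\neq 0$. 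In general the second component of $\Wall(Q)-\Wall(Q')$ computed in the twisted group $\Br_2(F)_s$ is $\Wall_2(Q)+\Wall_2(Q')+\Wall_1(Q')\cup\Wall_1(Q')+\Wall_1(Q)\cup\Wall_1(Q')$, and these cup-product terms must appear in your computation of $HW_2$; they do not. The root of the error is treating the connecting map for $\Int_s$ as if it were the one for ordinary $\Int$: on an odd homogeneous element, $\Int_s(x)$ differs from $xyx^{-1}$ by the factor $(-1)^{\deg x\deg y}$, and tracking this sign through the cocycle $\tilde b_\sigma\sigma(\tilde b_\tau)\tilde b_{\sigma\tau}^{-1}$ is exactly what produces the extra $H^2(\Gamma,\mathbb F_2)$-valued correction. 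Your closing caveat about the cup-product in the group law does not repair this, because ``matching the two components'' with the $HW_2$ formula you actually wrote down contradicts the lemma rather than confirming it. The alternative route you sketch at the end — prove (1) first by a cocycle computation with Clifford-group lifts (where the cup-product term shows up naturally), then reduce (2) to the binary generators $bQ_E$ using Propositions \ref{prop.ClEx}(1) and \ref{prop.WaEx}(1) — avoids this trap and is the right way to go, but it is only sketched, not carried out.
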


\subsection{Comparison of $\Wall_1(Q)$, $\Wall_2(Q)$ with 
the discriminant and classical Hasse-Witt invariant }

In this paper we are primarily concerned with $\Wall(Q)$ and 
$HW(Q',Q)$, which have the advantage of working in a uniform 
way for all fields, even those of characteristic $2$. 
In Appendix \ref{app.FrohArb}, however, we 
will need to recast a result of Fr\"ohlich, who excluded characteristic 
$2$ and used invariants that make sense only in that context. 
This subsection reviews the dictionary between the two systems.

 In this
subsection we assume that  the characteristic of $F$ is not
$2$.   For $a_1,\dots,a_n \in F^\times$ we consider 
the quadratic form 
\[
Q(x_1,\dots,x_{n})
= a_1x_1^2 +a_2x_2^2 + \dots +a_{n}x_{n}^2.
\]
We then have two classical invariants of $Q$, the discriminant 
\[
\disc(Q) = \prod_{j=1}^n a_j \in F^\times/(F^\times)^2 = 
H^1(F,\mathbb F_2),
\]
and the Hasse-Witt invariant 
\[
hw(Q)=\sum_{1\le i < j \le n} a_i \cup a_j \in 
H^2(F,\mathbb F_2) = \Br_2(F).
\]

The invariants $\disc(Q)$ and $hw(Q)$ are 
 used in \cite{Froh}. 
In the case of quadratic forms of even rank, 
the Wall classes $\Wall_1(Q)$, $\Wall_2(Q)$ 
are different from, but carry the same information as, the invariants $\disc(Q)$
and $hw(Q)$. In order to formulate
a precise statement, we define an element $SW(Q) \in \Br_2(F)_s$ by 
putting $SW(Q)=(\disc(Q),hw(Q))$. The notation $SW$ is meant to  
indicate that this is a truncated version of 
Delzant's \cite{Delzant} total Stiefel-Whitney class of $Q$, which 
lies in the multiplicative group 
\[
1 + H^1(F,\mathbb F_2) + H^2(F,\mathbb F_2) + H^3(F,\mathbb F_2) 
+ \dots 
\]

We also define a specific element $z$
in $\Br_2(F)_s$ by $z:=(z_1,z_2)$ with $z_1=-1_F \in F^\times/(F^\times)^2
=H^1(F,\mathbb F_2)$ and $z_2=0 \in \Br_2(F)$. Observe 
that $z$ is the class $SW(Q_{F \times F})$ of the hyperbolic plane 
$(F \times F,Q_{F \times F})$. 

\begin{lemma}[Wall] \label{lem.WallComp}
Let $(V,Q)$ be a quadratic space of 
dimension $2n$. Then there is an equality 
\[
SW(Q)+\Wall(Q)=nz
\] 
in the abelian group $\Br_2(F)_s$. 
\end{lemma}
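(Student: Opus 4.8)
The plan is to reduce the identity $SW(Q) + \Wall(Q) = nz$ to the case of a single binary form $aQ_E$ and then combine binary pieces using multiplicativity. Both $SW$ and $\Wall$ are homomorphisms from the even Grothendieck--Witt group: $\Wall$ was shown to factor through $W_0(F)$ in the text, and $SW$ is visibly additive on orthogonal direct sums because $\disc$ is multiplicative (additive in $H^1$) and $hw(Q_1 \oplus Q_2) = hw(Q_1) + hw(Q_2) + \disc(Q_1)\cup\disc(Q_2)$, which is exactly the Wall addition law on $\Br_s(F)$. Hence $Q \mapsto SW(Q) + \Wall(Q)$ is a homomorphism from the even Grothendieck--Witt group to $\Br_2(F)_s$, and the right-hand side $nz$ is additive in $n$ since $z = SW(Q_{F\times F})$ has $z_2 = 0$ so $z \cup z$ contributes nothing, i.e.\ $z + z = 2z$ literally. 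Therefore it suffices to check the claimed equality on a set of generators.

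First I would note that every even-dimensional quadratic space over $F$ (characteristic not $2$) is an orthogonal direct sum of binary forms $aQ_E$ with $E$ quadratic \'etale and $a \in F^\times$; indeed $aQ_E$ with $E = F[\sqrt{b}]$ is the form $\langle a, -ab\rangle$, and a diagonalization $\langle a_1, \dots, a_{2n}\rangle$ can be grouped in pairs $\langle a_{2i-1}, a_{2i}\rangle = a_{2i-1}Q_{E_i}$ with $E_i = F[\sqrt{-a_{2i-1}a_{2i}}]$. So I am reduced, after the homomorphism argument above, to proving the $n=1$ case:
\[
SW(aQ_E) + \Wall(aQ_E) = z.
\]
By Proposition \ref{prop.WaEx}(1), $\Wall(aQ_E) = (\chi_E, [D(E,a)])$. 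On the $SW$ side, writing $E = F[\sqrt{b}]$ so $aQ_E = \langle a, -ab \rangle$, one computes $\disc(aQ_E) = -a^2 b = -b \in F^\times/(F^\times)^2$ and $hw(aQ_E) = a \cup (-ab)$. Also $\chi_E$ corresponds to the square class $b$ under $H^1(F,\mathbb F_2) = F^\times/(F^\times)^2$, and $[D(E,a)] = \{\chi_E, a\} = b \cup a$ in $\Br_2(F)$. So the identity to check becomes, in the coordinates $H^1(F,\mathbb F_2) \times \Br_2(F)$ with the Wall addition law:
\[
(-b, \; a \cup(-ab)) + (b, \; b \cup a) = (-1, \; 0).
\]

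The first coordinate is $-b + b = -1$ in $F^\times/(F^\times)^2$, as desired. For the second coordinate I would expand using the Wall addition law: it is $a\cup(-ab) + b\cup a + (-b)\cup b$. Now $a \cup(-ab) = a\cup(-1) + a\cup a + a \cup b$, and $a \cup a = a\cup(-1)$ in $\Br_2(F)$ (the standard relation $(a,a) = (a,-1)$), so $a\cup(-ab) = a\cup b$; meanwhile $(-b)\cup b = 0$ by the standard relation $(-x,x) = 0$. Thus the second coordinate is $a\cup b + b\cup a = 2(a\cup b) = 0$. This completes the binary case, and the homomorphism argument finishes the proof. The main obstacle is purely bookkeeping: getting the Wall addition-law cross-terms and the symbol relations $(a,a) = (a,-1)$ and $(-b,b) = 0$ applied correctly so that everything collapses to $(-1,0)$; there is no conceptual difficulty, only the risk of a sign-class slip, so I would double-check the $n=1$ computation against the known values $\Wall(Q_{F\times F}) = 0$ and $SW(Q_{F\times F}) = z$ by taking $E = F\times F$, $a = 1$.
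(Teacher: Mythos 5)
The paper attributes this result to Wall and does not give a proof, so there is no in-text argument to compare against. Your proof is correct. The key observation---that $hw(Q_1 \oplus Q_2) = hw(Q_1) + hw(Q_2) + \disc(Q_1)\cup\disc(Q_2)$ is exactly the cross-term in the Wall addition law, so $Q \mapsto SW(Q) + \Wall(Q)$ is a genuine homomorphism from the even Grothendieck--Witt group to $\Br_2(F)_s$---reduces everything to the binary case $aQ_E$, and your symbol computation there is right: $a\cup a = a\cup(-1)$ collapses $a\cup(-ab)$ to $a\cup b$, $(-b)\cup b = 0$, and $2(a\cup b) = 0$, giving $(-1,0) = z$. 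The only needless step is the remark about $nz$ being additive in $n$: since $nz$ simply denotes the $n$-fold sum in the abelian group $\Br_2(F)_s$, this is automatic for any $z$ and requires no justification (in particular no appeal to $z_2 = 0$). Your sanity check via $E = F\times F$, $a=1$ is also consistent with the paper's $z = SW(Q_{F\times F})$.
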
 

\begin{corollary}[Wall]\label{lem.ClassConversion}
Let $(V,Q)$, $(V',Q')$ be two quadratic spaces of the same 
even dimension. 
Then there is an equality 
\[
HW(Q',Q)=SW(Q')-SW(Q)
\] 
in the abelian group $\Br_2(F)_s$.
\end{corollary}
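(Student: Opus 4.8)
The plan is to deduce Corollary~\ref{lem.ClassConversion} directly from Lemma~\ref{lem.WallComp} together with part~(2) of Lemma~\ref{lem.HWvs Wall}. First I would record the two instances of Lemma~\ref{lem.WallComp} that apply to $(V,Q)$ and $(V',Q')$: since both spaces have the same even dimension $2n$, we get $SW(Q)+\Wall(Q)=nz$ and $SW(Q')+\Wall(Q')=nz$ in $\Br_2(F)_s$. Subtracting these two equalities (using that $\Br_2(F)_s$ is an abelian group) kills the common term $nz$ and yields
\[
SW(Q')-SW(Q) = \Wall(Q)-\Wall(Q').
\]

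Next I would invoke Lemma~\ref{lem.HWvs Wall}(2), which states precisely that $HW(Q',Q)=\Wall(Q)-\Wall(Q')$. Combining this with the displayed identity gives $HW(Q',Q)=SW(Q')-SW(Q)$, which is the assertion of the corollary. That is the entire argument.

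There is essentially no obstacle here: the only point requiring a moment's care is that Lemma~\ref{lem.WallComp} is stated for a single quadratic space, so one must apply it twice and check that the dimension $n$ is the same in both applications — which is guaranteed by the hypothesis that $(V,Q)$ and $(V',Q')$ have the same even dimension — so that the terms $nz$ cancel exactly. Everything else is formal manipulation in the abelian group $\Br_2(F)_s$, and no characteristic hypothesis is needed since both $\Wall$ and $HW$ are defined uniformly (the invariants $\disc$ and $hw$ hidden inside $SW$ do require characteristic $\ne 2$, but that restriction is already in force throughout this subsection).
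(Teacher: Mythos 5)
Your argument is correct and is exactly the paper's proof: apply Lemma~\ref{lem.WallComp} to both spaces, subtract to cancel the common term $nz$, and conclude via Lemma~\ref{lem.HWvs Wall}(2). Nothing to add.
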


\begin{proof}
Corollary \ref{lem.ClassConversion} follows from 
Lemma \ref{lem.WallComp} together with 
Lemma \ref{lem.HWvs Wall}(2). 
\end{proof}

\section{Review of Weil indices} In this appendix we assume our ground field $F$ is
local and
fix  a nontrivial additive character $\psi$ on $F$. We again make the 
tacit assumption that all the quadratic spaces we consider are 
nondegenerate. 

\subsection{Weil indices for even dimensional quadratic spaces} 
For any quadratic space $(V,Q)$  its Weil index
$\gamma(Q,\psi)$ is  defined. It is a complex eighth root of unity. When
$V$ is even dimensional the Weil index is actually a fourth root of unity. 

Weil \cite{Weil} proved the following properties of the Weil index. 
\begin{proposition}[Weil]\label{prop.Weil}
\hfill
\begin{enumerate}
\item $\gamma(Q_1 \oplus Q_2,\psi)=\gamma(Q_1,\psi)\gamma(Q_2,\psi)$. 
\item $\gamma(Q,\psi)=1$ when $Q$ is a hyperbolic plane. 
\item $Q \mapsto \gamma(Q,\psi)$ induces a unitary character $\gamma_\psi$ 
on the abelian group $W_0(F)$. 
\item For any quaternion algebra $D$ the Weil index of $Q_D$ is given by 
\begin{equation*}
\gamma(Q_D,\psi)=\begin{cases}
-1 &\text{ if $D$ is not split,}\\
1 &\text{ if $D$ is  split.}
\end{cases}
\end{equation*}
\end{enumerate}
\end{proposition}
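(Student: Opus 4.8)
The final statement to prove is Proposition~\ref{prop.Weil}, the four basic properties of the Weil index $\gamma(Q,\psi)$ attached to a nondegenerate quadratic space $(V,Q)$ over a local field $F$. Since these are due to Weil, the plan is not to reprove them from scratch but to indicate how each follows from Weil's construction of $\gamma$ via the action of the metaplectic group on the space of $L^2$-functions, or equivalently from the Gauss-sum formula $\gamma(Q,\psi) = \int^* \psi(Q(x))\,dx / |\int^* \psi(Q(x))\,dx|$ interpreted as a principal value over a suitable self-dual Haar measure. First I would recall that $\gamma(Q,\psi)$ is characterized by the functional equation relating the Fourier transform (with respect to $\psi$ and the self-dual measure) of $\psi \circ Q$ to $\psi \circ Q^{-1}$, which makes the eighth-root-of-unity assertion immediate from Plancherel.

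For item (1), multiplicativity under orthogonal direct sums, the key point is that $\psi(Q_1 \oplus Q_2)(x_1,x_2) = \psi(Q_1(x_1))\psi(Q_2(x_2))$, so the defining Gauss integral factors as a product; one then checks that the normalizing Haar measures are compatible with the product decomposition. For item (2), that $\gamma$ vanishes (equals $1$) on a hyperbolic plane, one computes directly: a hyperbolic plane has a form equivalent to $(x,y) \mapsto xy$, and $\int\int \psi(xy)\,dx\,dy$ is, after Fourier inversion, manifestly of absolute value matching the normalization with trivial phase. Item (3) is then a formal consequence: by (1) the map $Q \mapsto \gamma(Q,\psi)$ is additive on the Grothendieck--Witt monoid, by (2) it kills hyperbolic planes, hence it factors through $W_0(F)$; unitarity (values on the unit circle) follows from the eighth-root-of-unity property together with the fact that $\gamma(-Q,\psi) = \overline{\gamma(Q,\psi)}$, which itself comes from complex-conjugating the Gauss integral.

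Item (4) is the substantive one and is where I expect the real work to lie. The claim is that for a quaternion algebra $D$ with its reduced-norm form $Q_D$, one has $\gamma(Q_D,\psi) = -1$ if $D$ is division and $+1$ if $D$ is split. The split case follows from (2) and (3), since $Q_{M_2(F)}$ is a sum of two hyperbolic planes. For the division case, the plan is to use the classification of quaternion algebras together with (1): writing $D = D(E,a)$ one has $[Q_D] = [Q_E] - [aQ_E]$ in $W_0(F)$ by the proposition preceding this one, so $\gamma(Q_D,\psi) = \gamma(Q_E,\psi)\gamma(aQ_E,\psi)^{-1}$, reducing everything to understanding Weil indices of binary forms $bQ_E$ (norm forms of quadratic \'etale algebras scaled by $b$). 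The hard part will be pinning down the sign for these binary forms: one must relate $\gamma(bQ_E,\psi)$ to the Hilbert symbol $(b, \disc E)$, which is exactly the classical computation of the Weil index of a binary form in terms of Hilbert symbols and Weil's $\gamma$-factors of characters. I would quote this from Weil \cite{Weil} rather than reprove it, observing that $D$ is division precisely when this Hilbert symbol is $-1$, which then matches the asserted formula and completes the proof.
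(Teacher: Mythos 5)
The paper does not give a proof of Proposition~\ref{prop.Weil} at all: it is stated with the attribution ``Weil'' and the single sentence ``Weil \cite{Weil} proved the following properties of the Weil index,'' then moves on to the Jacquet--Langlands proposition. So there is no in-paper argument to compare against; your task was in effect to reconstruct what a proof would look like, and your sketch does that faithfully. Your items (1)--(3) are the standard formal observations (factorization of the Gaussian integral, triviality on the hyperbolic plane via Fourier inversion, and passage to the Witt quotient), and your handling of (4) --- first disposing of the split case by (1) and (2), then reducing the division case via $[Q_D] = [Q_E] - [aQ_E]$ to the computation of $\gamma(bQ_E,\psi)$ in terms of the norm-residue/Hilbert symbol, and then invoking that $D(E,a)$ is division precisely when $a \notin N_{E/F}(E^\times)$ --- is a correct and sensible route. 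The one honest gap is exactly where you flag it: the actual evaluation of $\gamma(bQ_E,\psi)$ in terms of the Hilbert symbol is the substantive computation, and you quote it from Weil rather than reprove it. Since the paper itself also cites Weil wholesale for the entire proposition, your treatment is, if anything, more detailed than what the paper provides, and the two are compatible.
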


Jacquet-Langlands \cite[Lemma 1.2]{JL} proved the 
following result. As in Tate's article \cite{Tate} we
write $\epsilon_L$ for Langlands's version of local epsilon
factors.  Thus $\epsilon_L(\rho,\psi)$ is defined for any continuous
complex representation $\rho$ of $\Gamma$ 
(or, more generally, of the Weil group
of $F$). 
\begin{proposition}[Jacquet-Langlands]\label{prop.JL}
Let $E$ be a quadratic \'etale algebra, and let $\chi_E$ be the associated
sign character on $\Gamma$. Then 
\[
\gamma(Q_E,\psi)=\epsilon_L(\chi_E,\psi).
\]
\end{proposition}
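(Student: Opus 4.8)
This is \cite[Lemma 1.2]{JL}, so strictly speaking nothing new is needed; but here is the shape of the argument I would give. The plan is to separate the two kinds of quadratic \'etale algebra. If $E \cong F \times F$, then $\chi_E$ is trivial, so $\epsilon_L(\chi_E,\psi)=\epsilon_L(\mathbf 1,\psi)=1$ in the normalization of \cite{Tate}, while $Q_E=Q_{F\times F}$ is a hyperbolic plane, so $\gamma(Q_E,\psi)=1$ by Proposition \ref{prop.Weil}(2); thus both sides equal $1$. So we may assume $E/F$ is a separable quadratic field extension, in which case $Q_E$ is the norm form $N_{E/F}$.

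First I would rewrite the right-hand side as a Langlands constant. Since $\Ind_{E/F}\mathbf 1_E \cong \mathbf 1_F \oplus \chi_E$ as representations of $\Gamma$, additivity and inductivity of $\epsilon_L$ — in exactly the form that defines the Langlands constant $\lambda_{E/F}(\psi)$ attached to $E/F$ and $\psi$ — give $\epsilon_L(\chi_E,\psi)=\lambda_{E/F}(\psi)$, once one notes that the epsilon factors of the trivial characters of $\Gamma$ and of $\Gal(\overline F/E)$ are $1$. The proposition then becomes the identity $\gamma(N_{E/F},\psi)=\lambda_{E/F}(\psi)$: the Weil index of the norm form of $E/F$ equals the Langlands constant of $E/F$.

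The second, and essential, step is to prove this identity. It is part of Weil's analysis of the metaplectic cover in \cite{Weil}: each side is, up to elementary factors, a local Gauss sum, and for the norm form of a separable quadratic extension these are the very Gauss sums that compute $\lambda_{E/F}(\psi)$. In characteristic $\ne 2$ one writes $E=F(\sqrt d)$, so $N_{E/F}\cong\langle 1,-d\rangle$, computes $\gamma(\langle 1,-d\rangle,\psi)$ by multiplicativity of the Weil index (Proposition \ref{prop.Weil}(1)) together with the known behaviour of $\gamma(\langle a\rangle,\psi)$ under scaling of $a$, and matches the result with the explicit formula for $\lambda_{E/F}(\psi)$ in terms of the quadratic character attached to $d$. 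In characteristic $2$ the extension is of Artin--Schreier type and $N_{E/F}$ is a binary form of the shape $x^2+xy+ay^2$; the comparison still goes through, but now using the characteristic-$2$ formalism of Weil indices and local epsilon factors, which \cite{JL} and most classical references do not cover.

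The main obstacle is precisely this last step: identifying a Weil index with a local root number is analytic, not formal, and a complete proof must run through the split, unramified, and ramified nonarchimedean cases, residue characteristic $2$, and the archimedean fields. For that reason I would not reprove it from scratch; it is cleanest to invoke \cite[Lemma 1.2]{JL}, supplemented by \cite{Weil} in residue characteristic $2$.
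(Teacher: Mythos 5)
Your approach matches the paper's: the paper states this as a result of Jacquet--Langlands and gives no proof of its own, simply citing \cite[Lemma 1.2]{JL}, and you correctly defer to the same source. Your supplementary sketch (splitting off $E \cong F \times F$, reducing the field case to $\gamma(N_{E/F},\psi)=\lambda_{E/F}(\psi)$ via inductivity of $\epsilon_L$, and noting that characteristic $2$ needs Weil's formalism rather than the $\langle 1,-d\rangle$ presentation) is a sound reconstruction of what lies behind the citation, though it is not something the paper itself supplies.
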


Proposition \ref{prop.Weil} (4) and Proposition \ref{prop.JL} 
can be combined into a single statement. For this we define objects
$\chi_Q$ and $\zeta_Q$ associated to any even dimensional quadratic space
$(V,Q)$. First, $\chi_Q$ is by definition the sign character on $\Gamma$
obtained as $\Wall_1(Q)$. Second, $\zeta_Q$ is given by 
 \begin{equation*}
\zeta_Q=\begin{cases}
-1 &\text{ if $\Wall_2(Q)$ is nontrivial,}\\
1 &\text{ if $\Wall_2(Q)$ is  trivial.}
\end{cases}
\end{equation*} 
We should recall that $\Wall_2(Q)$ lies in $\Br_2(F)$, 
which for a local field is
\begin{enumerate}
\item cyclic of order $2$ except when $F$ is isomorphic to $\mathbb C$, 
\item trivial when $F$ is isomorphic to $\mathbb C$. 
\end{enumerate}

\begin{proposition}\label{prop.combo}
Let $(V,Q)$ be an even dimensional quadratic space. Then 
\[
\gamma(Q,\psi)=\epsilon_L(\chi_Q,\psi) \,\zeta_Q.
\]
\end{proposition}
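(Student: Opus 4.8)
The plan is to reduce the statement to the two already-quoted results, Proposition \ref{prop.Weil}(4) and Proposition \ref{prop.JL}, by using the Wall homomorphism to decompose an arbitrary even-dimensional quadratic space into pieces of the two types covered by those propositions. First I would use the classification of even-dimensional quadratic spaces up to Witt equivalence: any such $(V,Q)$ is Witt equivalent to an orthogonal direct sum $\bigoplus_i b_i Q_{E_i}$ of scaled norm forms of quadratic étale algebras, together with hyperbolic planes. Since both sides of the claimed identity, $\gamma(Q,\psi)$ and $\epsilon_L(\chi_Q,\psi)\zeta_Q$, factor through the even Witt group $W_0(F)$ — the left side by Proposition \ref{prop.Weil}(3), the right side because $\Wall$ is a homomorphism $W_0(F)\to\Br_s(F)$ and both $\chi_Q=\Wall_1(Q)$ and $\zeta_Q$ (a function of $\Wall_2(Q)$) depend only on the Witt class — it suffices to verify the formula when $(V,Q)=(E,aQ_E)$ for a quadratic étale algebra $E$ and $a\in F^\times$, and separately to check it is trivially true on hyperbolic planes.

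For a hyperbolic plane, $\gamma=1$ by Proposition \ref{prop.Weil}(2), and $\Wall(Q_{F\times F})$ is trivial in $\Br_s(F)$ (taking $E=F\times F$ in Proposition \ref{prop.WaEx}(1)), so $\chi_Q=0$ and $\zeta_Q=1$; both sides equal $1$. For $(E,aQ_E)$, Proposition \ref{prop.ClEx}(1) gives $C(aQ_E)\cong D(E,a)$, hence by Proposition \ref{prop.WaEx}(1) we have $\Wall_1(aQ_E)=\chi_E$ and $\Wall_2(aQ_E)=[D(E,a)]\in\Br_2(F)$. Therefore $\chi_Q=\chi_E$, and $\zeta_Q=-1$ exactly when $D(E,a)$ is a division algebra, i.e.\ not split. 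By Proposition \ref{prop.Weil}(4), $\gamma(Q_D,\psi)=-1$ if $D$ is nonsplit and $1$ if split; and the preceding proposition in the excerpt identifies $[Q_D]=[Q_E]-[aQ_E]$ in $W_0(F)$, so by multiplicativity of the Weil index $\gamma(Q_D,\psi)=\gamma(Q_E,\psi)\gamma(aQ_E,\psi)^{-1}$. Meanwhile Proposition \ref{prop.JL} gives $\gamma(Q_E,\psi)=\epsilon_L(\chi_E,\psi)$. Combining these, $\gamma(aQ_E,\psi)=\gamma(Q_E,\psi)\cdot\gamma(Q_D,\psi)^{-1}=\epsilon_L(\chi_E,\psi)\cdot\zeta_Q$, which is precisely the assertion for this space.

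The main obstacle — and really the only substantive point — is making sure the right-hand side genuinely factors through $W_0(F)$, so that the Witt-class reduction is legitimate: one must check that $Q\mapsto\epsilon_L(\chi_Q,\psi)$ and $Q\mapsto\zeta_Q$ are each compatible with orthogonal direct sums in the appropriate (multiplicative) sense. For $\chi_Q$ this is the additivity $\Wall_1(Q_1\oplus Q_2)=\Wall_1(Q_1)+\Wall_1(Q_2)$ together with inductivity (additivity) of $\epsilon_L$ in the argument for one-dimensional characters — but $\epsilon_L$ of a sum is a product, so one needs $\epsilon_L(\chi+\chi',\psi)=\epsilon_L(\chi,\psi)\epsilon_L(\chi',\psi)$ for sign characters, which holds. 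For $\zeta_Q$ one uses that $\Wall_2$ is additive only up to the cup-product correction $\chi_{Q_1}\cup\chi_{Q_2}$; this correction lands in $\Br_2(F)$, and one must verify that passing to $\zeta$ (i.e.\ the nontrivial element of $\Br_2(F)\cong\{\pm1\}$, for $F$ not $\cong\mathbb{C}$) is still compatible with the product structure — which amounts to the fact that in the local case $\Br_2(F)$ is a group of order $\le 2$ and that the sign $\zeta$ is exactly the image of $\Wall_2$ under the isomorphism $\Br_2(F)\xrightarrow{\sim}\{\pm1\}$. Once these compatibilities are in hand, the reduction to the two base cases above is routine.
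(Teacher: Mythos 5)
Your overall strategy matches the paper's: reduce via the Wall homomorphism, then verify on the building blocks $aQ_E$ using Propositions \ref{prop.Weil}(4) and \ref{prop.JL}. Your explicit verification for $(E,aQ_E)$ is correct and is essentially the computation the paper leaves implicit. However, the step where you try to reduce the general case to the building blocks has a genuine error.

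You claim that $\epsilon_L(\chi+\chi',\psi)=\epsilon_L(\chi,\psi)\,\epsilon_L(\chi',\psi)$ for sign characters ``which holds.'' It does not. The addition in $H^1(F,\mathbb F_2)$ corresponds to the \emph{product} of characters $\chi\chi'$, not the direct sum of representations; inductivity of $\epsilon_L$ gives $\epsilon_L(\chi\oplus\chi',\psi)=\epsilon_L(\chi,\psi)\,\epsilon_L(\chi',\psi)$, which is a different statement. In fact, for a ramified quadratic character $\chi$ of a $p$-adic field with $\chi(-1)=-1$, one has $\epsilon_L(\chi,\psi)^2=\chi(-1)=-1\neq 1=\epsilon_L(\chi\cdot\chi,\psi)$. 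The failure is controlled by the cup product $\chi\cup\chi'$; neither $Q\mapsto\epsilon_L(\chi_Q,\psi)$ nor $Q\mapsto\zeta_Q$ is separately a character of $W_0(F)$. The two cup-product corrections cancel in the product $\epsilon_L(\chi_Q,\psi)\zeta_Q$, but establishing that cancellation directly essentially amounts to invoking Deligne's theorem (Proposition \ref{prop.Deligne}), which would undermine the independence of Propositions \ref{prop.combo} and \ref{prop.Deligne} that the paper wants.

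The paper sidesteps the multiplicativity question entirely. The observation is that both sides of the identity factor not merely through $W_0(F)$ but through the Wall homomorphism $\Wall:W_0(F)\to\Br_2(F)_s$. For the right-hand side this is immediate, since $\chi_Q$ and $\zeta_Q$ are functions of $\Wall(Q)$. For the left-hand side, $\gamma_\psi$ factors through $\Wall$ if and only if it is trivial on $\ker\Wall$. Unless $F\cong\mathbb R$, the Wall homomorphism is injective (in fact an isomorphism onto $\Br_2(F)_s$), so there is nothing to check; when $F\cong\mathbb R$ the kernel is the infinite cyclic group generated by $Q_D\oplus Q_D$ with $D$ the Hamilton quaternions, and $\gamma_\psi(Q_D\oplus Q_D)=(-1)^2=1$. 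Having factored both sides through $\Wall$, one is comparing two functions on $\Br_2(F)_s$, and one only needs to check agreement once per element of $\Br_2(F)_s$. Every element $(\chi,x)$ with $\chi$ nontrivial is $\Wall(aQ_E)$ for a suitable $a$, and every $(0,x)$ is $\Wall(Q_D)$ for a suitable quaternion algebra $D$; for these the identity follows exactly as in your second paragraph. No multiplicativity of the right-hand side is ever invoked.

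So: keep your verification for $aQ_E$, add the (easy) verification for $Q_D$ directly, and replace the appeal to multiplicativity by the observation that $\Wall$ has trivial kernel except over $\mathbb R$ (where the kernel is killed by $\gamma_\psi$), so that checking one representative per element of $\Br_2(F)_s$ suffices.
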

\begin{proof}
Unless $F$ is isomorphic to $\mathbb R$ the Wall homomorphism $W_0(F) \to
\Br_2(F)_s$ is an isomorphism, and the proposition we are now proving
follows directly from Proposition \ref{prop.Weil} (4) and Proposition
\ref{prop.JL}. The proof in the real case is the same, once one observes
that the unitary character
$\gamma_\psi$ on 
$W_0(\mathbb R)$ is trivial on the kernel of the Wall homomorphism. Indeed
the kernel of the Wall homomorphism is the infinite cyclic group generated
by $Q_D \oplus Q_D$, where $D$ is the Hamiltonian 
quaternion algebra, and 
$\gamma_\psi(Q_D \oplus Q_D)=(-1)^2=1$.  
\end{proof}

\section{Orthogonal representations of $\Gamma$} 

Let $\rho$ be a (continuous) finite dimensional complex representation of
$\Gamma$. Assume that $\rho$ is orthogonal. Then the Stiefel-Whitney
classes $w_n(\rho) \in H^n(F,\mathbb F_2)$ are defined (see Deligne \cite{Del}). 
In fact they are even defined for virtual orthogonal 
complex representations, 
i.e.~formal differences $ \rho = \rho_1 - \rho_2$ of two 
orthogonal complex representations $\rho_1$ and $\rho_2$.

We write $\bar w_2(\rho)$ for the element in $\Br_2(F)$ 
obtained as the
image of $w_2(\rho)$ under the homomorphism 
$H^2(F,\mathbb F_2) \to
H^2(F,\overline F^\times)$ induced by the homomorphism 
$\mathbb F_2 \to
\overline F^\times$ that sends $n$ to $(-1)^n$. Of course 
$\bar w_2(\rho)$
is always trivial when the characteristic of $F$ is $2$. 
Finally, we put $\overline{SW}(\rho)=(w_1(\rho),
\bar w_2(\rho)) \in \Br_2(F)_s$. 

Following Deligne \cite{Del}, we want to write a formula for the local epsilon
factor
$\epsilon_L(\rho,\psi)$ in terms of the first and second Stiefel-Whitney
classes of $\rho$.  To do so we define objects
$\chi_\rho$ and $\zeta_\rho$. First, $\chi_\rho$ is by definition the sign
character on $\Gamma$ obtained as $w_1(\rho)$. More concretely,
$\chi_\rho$ is the sign character $\det(\rho)$. 
Second, $\zeta_\rho$ is given
by 
 \begin{equation*}
\zeta_\rho=\begin{cases}
-1 &\text{ if  $\bar w_2(\rho)$ is nontrivial,}\\
1 &\text{ if  $\bar w_2(\rho)$ is trivial.}
\end{cases}
\end{equation*} 

We then have the following reformulation of Deligne's result: 
\begin{proposition}[Deligne] \label{prop.Deligne}
Let $\rho$ be a virtual orthogonal representation of
$\Gamma$. Then 
\[
\epsilon_L(\rho,\psi)=\epsilon_L(\chi_\rho,\psi) \,\zeta_\rho.
\]
\end{proposition}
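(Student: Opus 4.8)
The plan is to recognize Proposition \ref{prop.Deligne} as the main theorem of Deligne's paper \cite{Del}, rephrased in the normalization $\epsilon_L$ and in the $\overline{SW}$/$\Br_2(F)_s$ language of the appendices. Beyond citing \cite{Del} there are really only two things to arrange: (i) that the difference between Deligne's and Langlands's conventions for the local constant is harmless here, and (ii) that $\bar w_2(\rho)$ matches the invariant appearing in Deligne's formula. I would structure the argument as a purely formal reduction, followed by the appeal to \cite{Del}.

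For the reduction, first note that both sides of the asserted identity are multiplicative under $\oplus$, hence extend to homomorphisms from the group of virtual orthogonal representations of $\Gamma$ to $\mathbb C^\times$. For the left side this is the multiplicativity $\epsilon_L(\rho_1\oplus\rho_2,\psi)=\epsilon_L(\rho_1,\psi)\,\epsilon_L(\rho_2,\psi)$. For the right side, observe that $\rho\mapsto\overline{SW}(\rho)=(w_1(\rho),\bar w_2(\rho))$ is additive as a map into $\Br_2(F)_s$ — the cross term $w_1(\rho_1)\cup w_1(\rho_2)$ in $\bar w_2(\rho_1\oplus\rho_2)$ is exactly the $2$-cocycle of the Wall group law — while $\rho\mapsto\epsilon_L(\chi_\rho,\psi)\,\zeta_\rho$ is the composite of $\overline{SW}$ with the map $\Br_2(F)_s\to\mathbb C^\times$, $(\chi,x)\mapsto\epsilon_L(\chi,\psi)\cdot\mathrm{sgn}(x)$, where $\mathrm{sgn}(x)=-1$ or $1$ according as $x$ is nontrivial or trivial. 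This last map is a homomorphism because $\epsilon_L(\chi+\chi',\psi)=\epsilon_L(\chi,\psi)\,\epsilon_L(\chi',\psi)\,\mathrm{sgn}(\chi\cup\chi')$ for sign characters $\chi,\chi'$, which follows from Proposition \ref{prop.combo} (equivalently Propositions \ref{prop.JL} and \ref{prop.Weil}) applied to the even Witt class $[Q_E]+[Q_{E'}]-[Q_{E''}]$, whose $\Wall$-class is $(0,\chi\cup\chi')$. Hence it suffices to prove the identity for honest orthogonal $\rho$, and then, by subtracting copies of the trivial representation, for virtual $\rho$ of degree $0$.

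For honest orthogonal $\rho$ I would invoke \cite{Del}: the ratio $\epsilon(\rho,\psi)/\epsilon(\det\rho,\psi)$ equals the image of $w_2(\rho)$ under $H^2(F,\mathbb F_2)\to\Br_2(F)\hookrightarrow\{\pm1\}$. This ratio is insensitive to the Deligne-versus-Langlands choice of normalization, because the two conventions differ by a factor of the shape $(\det\rho)(c)$ with $c$ independent of $\rho$ (see \cite{Tate}) and $\det(\det\rho)=\det\rho$, so the same equality holds with $\epsilon$ replaced by $\epsilon_L$. Since $\chi_\rho=w_1(\rho)=\det\rho$ and $\zeta_\rho$ is by definition the image of $\bar w_2(\rho)$ in $\{\pm1\}$, this is precisely $\epsilon_L(\rho,\psi)=\epsilon_L(\chi_\rho,\psi)\,\zeta_\rho$; combined with the reduction step it yields the proposition for all virtual orthogonal $\rho$.

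The step I expect to be the main obstacle is (ii): one must verify that the invariant appearing in Deligne's formula, as literally written in \cite{Del}, really is the honest second Stiefel-Whitney class $w_2(\rho)$ and not a spinor-norm-twisted variant of it — there is a genuine $w_2$-versus-$sw_2$ discrepancy, entirely parallel to the $SW$-versus-$\Wall$ correction of Lemma \ref{lem.WallComp} — so one has to pin down which normalization Deligne uses and reconcile it with the $\overline{SW}$ fixed in the appendices. One must also run the parallel check at the archimedean places: $F\cong\mathbb C$ is trivial, and $F\cong\mathbb R$ has to be verified directly against $\Br_2(\mathbb R)=\{\pm1\}$.
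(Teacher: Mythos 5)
Your proposal is correct in substance and in the end relies on the same essential input (Deligne's theorem), but the organization is noticeably more elaborate than the paper's, and there is a small gap at the exact point where the paper does its one nontrivial computation.

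The paper's proof is short and direct: cite Deligne for virtual $\rho$ of dimension $0$ and trivial determinant (where the statement reads $\epsilon_L(\rho,\psi)=\zeta_\rho$), then deduce the general case by replacing $\rho$ with $\rho'=\rho-\det(\rho)-V$, with $V$ a sum of $\dim(\rho)-1$ copies of the trivial representation; the one computation is that $w_2(\rho')=w_2(\rho)$, from which $\zeta_{\rho'}=\zeta_\rho$ and $\epsilon_L(\rho',\psi)=\epsilon_L(\rho,\psi)\epsilon_L(\chi_\rho,\psi)^{-1}$. You instead first establish that both sides are homomorphisms on the Grothendieck group of orthogonal representations — for the right side by feeding $\overline{SW}$ through the Wall group law and then using the cup-product twist $\epsilon_L(\chi+\chi',\psi)=\epsilon_L(\chi,\psi)\epsilon_L(\chi',\psi)\,\mathrm{sgn}(\chi\cup\chi')$, which you correctly extract from Propositions \ref{prop.Weil}, \ref{prop.JL} and \ref{prop.combo} via the Witt class $[Q_E]+[Q_{E'}]-[Q_{E''}]$. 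That multiplicativity argument is correct, but the paper doesn't need it here; it's extra machinery.

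The gap is in your closing appeal to \cite{Del}. You invoke Deligne in the form ``for honest orthogonal $\rho$, $\epsilon(\rho,\psi)/\epsilon(\det\rho,\psi)$ equals $w_2(\rho)$,'' but Deligne's theorem is stated for virtual orthogonal representations of \emph{degree zero and trivial determinant}. Passing from that statement to your ratio form requires exactly the computation the paper makes: one must subtract not just copies of the trivial representation but also $\det(\rho)$, and then verify $w_2$ is unchanged. Your remark that one reduces to degree $0$ ``by subtracting copies of the trivial representation'' misses the determinant subtraction and so does not reach the hypotheses of Deligne's theorem. Your homomorphism framework could absorb this (check the identity on the trivial representation and on arbitrary sign characters, where $\zeta=1$ trivially, and on the Deligne subgroup; these generate), but as written you jump past that point. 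The normalization worries you flag — $\epsilon_L$ versus $\epsilon_{\mathrm{Deligne}}$, and $w_2$ versus a spinor-twisted $sw_2$ — are genuine things to watch, but the paper sidesteps them entirely by working in Deligne's degree-$0$, trivial-determinant setting, where the two normalizations of the local constant coincide.
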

\begin{proof}
Deligne proves this 
statement when $\rho$ has dimension $0$ and trivial determinant. In that case,
since
$\epsilon_L(\chi,\psi)=1$ when $\chi$ is the trivial character on
$\Gamma$, the  proposition simply says that 
\[
\epsilon_L(\rho,\psi)=\zeta_\rho.
\]
The general case
follows from Deligne's result for the virtual representation
$\rho' = \rho-\det(\rho)-V$, where $V$ is a direct sum of $\dim(\rho)-1$ copies of
the trivial representation of $\Gamma$. The point is that $w_2(\rho') = w_2(\rho)$.\end{proof}

\section{Relation between Weil indices and local epsilon factors}

Comparing Propositions \ref{prop.combo} and \ref{prop.Deligne},  
we arrive at the following conclusion. We again make the 
tacit assumption that the
quadratic spaces we consider are nondegenerate.

\begin{proposition}\label{thm.CompEpWeil}
 Let $(V,Q)$ be an even dimensional quadratic space, and let
$\rho$ be a virtual orthogonal complex representation of 
$\Gamma$. Assume further that
$\Wall(Q)=\overline{SW}(\rho)$. Then 
\[
\gamma(Q,\psi)=\epsilon_L(\rho,\psi). 
\]
\end{proposition}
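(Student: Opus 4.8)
The plan is to reduce the claimed equality $\gamma(Q,\psi)=\epsilon_L(\rho,\psi)$ to the two reformulations already established: Proposition \ref{prop.combo}, which expresses $\gamma(Q,\psi)$ as $\epsilon_L(\chi_Q,\psi)\,\zeta_Q$, and Proposition \ref{prop.Deligne}, which expresses $\epsilon_L(\rho,\psi)$ as $\epsilon_L(\chi_\rho,\psi)\,\zeta_\rho$. Granting the hypothesis $\Wall(Q)=\overline{SW}(\rho)$, it therefore suffices to show that the two factors match on each side: first that $\chi_Q=\chi_\rho$ as sign characters on $\Gamma$, and second that $\zeta_Q=\zeta_\rho$ as complex numbers in $\{\pm1\}$.

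For the first matching I would simply unwind the definitions. By construction $\chi_Q=\Wall_1(Q)$ and $\chi_\rho=w_1(\rho)$, both regarded as elements of $H^1(F,\mathbb F_2)$. Writing $\Wall(Q)=(\Wall_1(Q),\Wall_2(Q))$ and $\overline{SW}(\rho)=(w_1(\rho),\bar w_2(\rho))$ as elements of $\Br_2(F)_s=H^1(F,\mathbb F_2)\times\Br_2(F)$, the hypothesis $\Wall(Q)=\overline{SW}(\rho)$ gives componentwise equality, so in particular $\Wall_1(Q)=w_1(\rho)$, i.e. $\chi_Q=\chi_\rho$. Hence $\epsilon_L(\chi_Q,\psi)=\epsilon_L(\chi_\rho,\psi)$.

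For the second matching I would again use the second component of the hypothesis: $\Wall_2(Q)=\bar w_2(\rho)$ in $\Br_2(F)$. Comparing the defining case distinctions, $\zeta_Q$ is $-1$ or $1$ according as $\Wall_2(Q)$ is nontrivial or trivial, while $\zeta_\rho$ is $-1$ or $1$ according as $\bar w_2(\rho)$ is nontrivial or trivial. Since $\Wall_2(Q)=\bar w_2(\rho)$, these elements are simultaneously trivial or nontrivial, so $\zeta_Q=\zeta_\rho$. Combining the two matchings:
\[
\gamma(Q,\psi)=\epsilon_L(\chi_Q,\psi)\,\zeta_Q=\epsilon_L(\chi_\rho,\psi)\,\zeta_\rho=\epsilon_L(\rho,\psi),
\]
which is the assertion. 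In truth there is no real obstacle here: the proposition is designed to be the formal confluence of Propositions \ref{prop.combo} and \ref{prop.Deligne}, and all the substantive work has been done in establishing those two statements (and, behind them, the results of Weil, Jacquet--Langlands, and Deligne). The only point requiring a moment's care is the bookkeeping with the identification $\Br_2(F)_s=H^1(F,\mathbb F_2)\times\Br_2(F)$, so that "$\Wall(Q)=\overline{SW}(\rho)$" really does unpack into the two scalar equalities used above; this is immediate from the conventions fixed in Appendix \ref{app.WgWh}.
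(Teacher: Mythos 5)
Your proof is correct and is exactly the argument the paper intends: the proposition is stated as an immediate consequence of "Comparing Propositions \ref{prop.combo} and \ref{prop.Deligne}," and your componentwise unwinding of $\Wall(Q)=\overline{SW}(\rho)$ into $\chi_Q=\chi_\rho$ and $\zeta_Q=\zeta_\rho$ is precisely that comparison.
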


\begin{corollary} \label{cor.Wep}
Let $(V,Q)$, $(\tilde V,\tilde Q)$ be two quadratic spaces of the same even
dimension, and let $\rho$ be a virtual orthogonal  
complex representation of $\Gamma$. 
If $HW(Q,\tilde Q)=\overline{SW}(\rho )$, then $\gamma(\tilde
Q,\psi)\gamma(Q,\psi)^{-1} =
\epsilon_L(\rho,\psi) $.
\end{corollary}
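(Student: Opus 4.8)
The plan is to deduce Corollary \ref{cor.Wep} from Propositions \ref{prop.combo} and \ref{prop.Deligne} (equivalently, from Proposition \ref{thm.CompEpWeil}) together with Lemma \ref{lem.HWvs Wall}(2), the unifying observation being that both $Q \mapsto \gamma(Q,\psi)$ and $\rho \mapsto \epsilon_L(\rho,\psi)$ are governed by one and the same function on $\Br_2(F)_s$.

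First I would rewrite the hypothesis. By Lemma \ref{lem.HWvs Wall}(2) one has $HW(Q,\tilde Q) = \Wall(\tilde Q) - \Wall(Q)$, so the assumption $HW(Q,\tilde Q) = \overline{SW}(\rho)$ amounts to the equality
\[
\overline{SW}(\rho) = \Wall(\tilde Q) - \Wall(Q)
\]
in the abelian group $\Br_2(F)_s$. Next I would define $\Phi\colon \Br_2(F)_s \to \mathbb C^\times$ by setting $\Phi(\chi,x) = \epsilon_L(\chi,\psi)$ when $x$ is trivial and $\Phi(\chi,x) = -\epsilon_L(\chi,\psi)$ when $x$ is nontrivial. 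Then Proposition \ref{prop.combo} says exactly that $\gamma(Q,\psi) = \Phi(\Wall(Q))$ for every even-dimensional quadratic space $(V,Q)$, while Proposition \ref{prop.Deligne} says exactly that $\epsilon_L(\rho,\psi) = \Phi(\overline{SW}(\rho))$ for every virtual orthogonal representation $\rho$ of $\Gamma$; here one uses that the first coordinate of $\overline{SW}(\rho)$ is $w_1(\rho) = \chi_\rho$ and that its second coordinate $\bar w_2(\rho)$ is nontrivial precisely when $\zeta_\rho = -1$.

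The key point — and I expect it to be the only step that needs an argument — is that $\Phi$ is a \emph{homomorphism} of abelian groups. This is not apparent from the defining formula, since $\epsilon_L$ is not multiplicative under multiplication of characters; but it follows formally from the facts that $\gamma_\psi$ is a group homomorphism on $W_0(F)$ (Proposition \ref{prop.Weil}(3)), that $\Wall\colon W_0(F) \to \Br_2(F)_s$ is a surjective group homomorphism (see Appendix \ref{app.WgWh}), and that $\gamma_\psi = \Phi \circ \Wall$ by Proposition \ref{prop.combo}: the apparent failure of multiplicativity of $\epsilon_L$ is exactly compensated by the $\zeta$-terms and the cup-product twist in the group law of $\Br_2(F)_s$, and this compensation is already encoded in Proposition \ref{prop.combo}.

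Granting this, the corollary is a one-line computation: using Proposition \ref{prop.Deligne}, the rewritten hypothesis, the homomorphism property of $\Phi$, and Proposition \ref{prop.combo} in turn,
\[
\epsilon_L(\rho,\psi) = \Phi\bigl(\overline{SW}(\rho)\bigr) = \Phi\bigl(\Wall(\tilde Q) - \Wall(Q)\bigr) = \Phi\bigl(\Wall(\tilde Q)\bigr)\Phi\bigl(\Wall(Q)\bigr)^{-1} = \gamma(\tilde Q,\psi)\gamma(Q,\psi)^{-1}.
\]
Thus the whole content of the proof is the verification that $\Phi$ is a homomorphism, which as indicated is a formal consequence of results already established; everything else is bookkeeping with the identifications provided by the Wall homomorphism and the Stiefel--Whitney class.
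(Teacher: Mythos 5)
Your argument is correct, and it is essentially the paper's argument unwound: the paper simply applies Proposition \ref{thm.CompEpWeil} to the auxiliary quadratic form $\tilde Q \oplus (-Q)$, noting that $\Wall(\tilde Q \oplus (-Q)) = \Wall(\tilde Q) - \Wall(Q) = HW(Q,\tilde Q) = \overline{SW}(\rho)$ and $\gamma(\tilde Q \oplus (-Q),\psi) = \gamma(\tilde Q,\psi)\gamma(Q,\psi)^{-1}$, which is precisely the computation you carry out after factoring $\gamma_\psi$ through your function $\Phi$. Your verification that $\Phi$ is a homomorphism is correct and formal (surjectivity of $\Wall$ onto $\Br_2(F)_s$ plus $\gamma_\psi = \Phi\circ\Wall$), but it is not needed for the corollary as stated: you only need $\Phi$ to respect the one specific difference $\Wall(\tilde Q) - \Wall(Q)$, which is automatic once you realize that difference as $\Wall$ of a single space, as the paper does.
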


\begin{proof}
Apply the proposition to the quadratic form $\tilde Q \oplus (-Q)$. 
\end{proof}

\section{Fr\"ohlich theory in arbitrary characteristic}
\label{app.FrohArb}

Let $F$ be a field, $\overline{F}$ a separable closure, and
$\Gamma=\Gal(\overline{F}/F)$. We are again interested in the 
abelian groups 
$\Br_2(F) \subset \Br_2(F)_s$. 
We continue to make the tacit assumption that the 
quadratic spaces we consider are nondegenerate.

\subsection{The local symbol $\{\chi,b\}$} \label{sub.LocSymbol}

We need to review the local symbol $\{\chi,b\}$
(a standard reference is  \cite[Chapter 14]{Serre}) in greater detail. 
 For $\chi \in H^1(\Gamma,\mathbb F_2)$ and 
$b \in F^\times$, we
denote by $\{\chi,b\}$ the element in $\Br_2(F)$ obtained as the 
cup-product $b \cup \delta\chi$, where $\delta$ denotes the
connecting homomorphism $H^1(F,\mathbb F_2)
\to H^2(F,\mathbb Z)$ obtained from the 
short exact sequence  
\[
0 \to \mathbb Z \xrightarrow{2} \mathbb Z \to \mathbb F_2 \to 0
\] 
of (trivial) $\Gamma$-modules.  It is evident that 
 $\{\chi,b\}$ is additive in both $\chi$ and $b$. It follows that
$\{\chi,b^2\}=(2\chi,b)=(0,b)=0$, so that $(\chi,b)$ 
can be viewed as a pairing 
\begin{equation}\label{eq.Hilb}
H^1(\Gamma,\mathbb F_2) \otimes_{\mathbb Z}
F^\times/(F^\times)^2 \to \Br_2(F). 
\end{equation}

When the characteristic of $F$ is not $2$, Kummer theory identifies
 $F^\times/(F^\times)^2$ with $H^1(\Gamma,\mathbb F_2)$, and  
 we may view  the pairing above as the Hilbert pairing 
\[
F^\times/(F^\times)^2 \otimes_{\mathbb Z} F^\times/(F^\times)^2 \to
\Br_2(F)
\]
or  as the cup-product pairing  
\[
H^1(\Gamma,\mathbb F_2) \otimes_{\mathbb Z} H^1(\Gamma,\mathbb F_2) \to
H^2(\Gamma,\mathbb F_2)=\Br_2(F).
\]
All of this is explained in Serre's book. 

We are especially interested in 
 characteristic $2$, so the relevant pairing is
\eqref{eq.Hilb}. In order to formulate Fr\"ohlich's result 
in a way that  is valid even in characteristic $2$, 
we need to view
the pairing in an equivalent form. 
Let $U$ be an $\mathbb F_2$-vector space, and view $U$ as a $\Gamma$-module
with trivial $\Gamma$-action. There are  obvious
(cup-product) homomorphisms 
\[
H^n(\Gamma,\mathbb F_2) \otimes_{\mathbb Z} U \to H^n(\Gamma,U).
\]
We claim that these homomorphisms are isomorphisms. Indeed, this follows
from the fact that taking cohomology for a profinite group preserves
direct sums (because it preserves finite direct sums and filtered
colimits). Taking $U$ to be $F^\times/(F^\times)^2$, we find that 
\[
H^1(\Gamma,F^\times/(F^\times)^2)=H^1(\Gamma,\mathbb F_2) \otimes_{\mathbb Z} 
F^\times/(F^\times)^2, 
\] 
so that we may view the pairing \eqref{eq.Hilb} as a homomorphism 
\begin{equation}
\xi:H^1(\Gamma,F^\times/(F^\times)^2) \to \Br_2(F).
\end{equation}
When we do so, we have the equality 
\begin{equation}\label{eq.XiVsLocSymb}
\{\chi,b\} = \xi(\chi \otimes b). 
\end{equation}

We are going to use $\xi$ to extend Fr\"ohlich's result to fields of 
characteristic $2$. For such a field   
the squaring map is injective, so square roots are unique when they exist. 
The next result gives a more explicit description of $\xi$ in 
characteristic  $2$. 

\begin{lemma}\label{lem.xiFroh} 
Assume that the  field $F$ has characteristic $2$. 
 Let $\eta$ be a 
homomorphism $\Gamma \to F^\times/(F^\times)^2$ 
with open kernel.      
Choose a $1$-cochain
$u_\sigma$ of $\Gamma$ in $F^\times$ such 
that the image of $u_\sigma$
under $F^\times \twoheadrightarrow F^\times/(F^\times)^2$ 
is $\eta(\sigma)$.
The homomorphism property of $\eta$  implies 
that $u_\sigma u_\tau u_{\sigma\tau}^{-1}$ is 
a square in $F^\times$, and  
 $\xi(\eta)$ is represented by the $2$-cocycle 
\[
z_{\sigma,\tau}=\sqrt{u_\sigma u_\tau u_{\sigma\tau}^{-1}} 
\in F^\times \subset \overline{F}^\times.
\] 
\end{lemma}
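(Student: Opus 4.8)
The plan is to unwind the definition of $\xi$ — which was set up as the composite of the cup‑product pairing \eqref{eq.Hilb} with the Kummer‑type identification $H^1(\Gamma,F^\times/(F^\times)^2)=H^1(\Gamma,\mathbb F_2)\otimes F^\times/(F^\times)^2$ — and to track an explicit cochain through each arrow. First I would reduce to the case where $\eta$ is elementary, i.e.\ $\eta = \chi\otimes b$ for a single sign character $\chi$ and a single class $b\in F^\times/(F^\times)^2$: since both $\xi$ and the proposed formula $z_{\sigma,\tau}=\sqrt{u_\sigma u_\tau u_{\sigma\tau}^{-1}}$ are additive in $\eta$ (for the second, note that if $u_\sigma$ and $v_\sigma$ are lifts for $\eta_1,\eta_2$ then $u_\sigma v_\sigma$ is a lift for $\eta_1+\eta_2$, and the square roots multiply because square roots are unique in characteristic $2$), it suffices to check the formula on such $\eta$. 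For an elementary $\eta=\chi\otimes b$ one may take $u_\sigma := b^{\,\tilde\chi(\sigma)}$, where $\tilde\chi(\sigma)\in\{0,1\}$ is the representative in $\mathbb Z$ of $\chi(\sigma)\in\mathbb F_2$.

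Next I would compute $u_\sigma u_\tau u_{\sigma\tau}^{-1}=b^{\,\tilde\chi(\sigma)+\tilde\chi(\tau)-\tilde\chi(\sigma\tau)}$, whose exponent is $2\cdot\delta\chi(\sigma,\tau)$ where $\delta\chi\in Z^2(\Gamma,\mathbb Z)$ is the standard $2$‑cocycle representing $\delta\chi$ (the connecting map for $0\to\mathbb Z\xrightarrow{2}\mathbb Z\to\mathbb F_2\to 0$). Hence $\sqrt{u_\sigma u_\tau u_{\sigma\tau}^{-1}}=b^{\,\delta\chi(\sigma,\tau)}$, and uniqueness of square roots in characteristic $2$ makes this unambiguous. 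On the other hand, by definition $\{\chi,b\}=b\cup\delta\chi\in\Br_2(F)=H^2(\Gamma,\overline F^\times)$ is exactly the cup product of the $0$‑cochain $b\in F^\times=H^0(\Gamma,\overline F^\times)$ with the $2$‑cocycle $\delta\chi$, which on the level of cochains is the $2$‑cocycle $(\sigma,\tau)\mapsto b^{\,\delta\chi(\sigma,\tau)}$. Combined with \eqref{eq.XiVsLocSymb}, this gives $\xi(\eta)=\{\chi,b\}=[b^{\,\delta\chi(\sigma,\tau)}]=[z_{\sigma,\tau}]$, as claimed.

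The main obstacle is bookkeeping rather than any conceptual difficulty: one must be careful that the several identifications in play (the cup product $H^1(\Gamma,\mathbb F_2)\otimes U\xrightarrow{\sim}H^1(\Gamma,U)$, the connecting map $\delta$, the inclusion $F^\times\subset\overline F^\times$, and the pairing $\mathbb F_2\otimes\mathbb F_2\to\overline F^\times$, $m\otimes n\mapsto(-1)^n$ from the statement of Wall's proposition) are all normalized consistently, and that passing to the additive reduction does not silently change a cocycle by a coboundary. A minor point worth a sentence is that the cochain $u_\sigma u_\tau u_{\sigma\tau}^{-1}$ being a square is forced purely formally by $\eta$ being a homomorphism, so the square root in the statement always makes sense and, in characteristic $2$, is canonical; one then only has to observe that a different choice of lift $u_\sigma$ changes $z_{\sigma,\tau}$ by a coboundary, so the cohomology class is well defined — exactly as needed for $\xi(\eta)$ to be well defined in the first place.
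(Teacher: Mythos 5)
Your proof is correct and follows essentially the same route as the paper: reduce to an elementary tensor $\eta = \chi\otimes b$ by additivity, take the canonical lift $u_\sigma = b^{\tilde\chi(\sigma)}$ with $\tilde\chi(\sigma)\in\{0,1\}$, and match the resulting $2$-cocycle against the explicit cocycle for $\{\chi,b\}=b\cup\delta\chi$. The only cosmetic difference is that you express the cocycle compactly as $b^{\delta\chi(\sigma,\tau)}$ where the paper unwinds it into a two-case formula (agreeing, since $\tilde\chi(\sigma)+\tilde\chi(\tau)-\tilde\chi(\sigma\tau)=2$ exactly when $\chi(\sigma)=\chi(\tau)=1$ and is $0$ otherwise).
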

 
\begin{proof}
Without loss of generality we may assume that $\eta$ is of the form 
$\chi \otimes a$ with $\chi \in H^1(\Gamma,\mathbb F_2)$ and 
$a \in F^\times$.
Concretely this means that $\eta(\tau)=a^{\chi(\tau)} \in
F^\times/(F^\times)^2$.

We need to compute $\xi(\eta)$. To obtain a cocycle version of the
connecting map 
$H^1(\Gamma, \mathbb F_2) \to H^2(\Gamma,\mathbb Z)$, one needs to choose
a set-theoretic section of $\mathbb Z \twoheadrightarrow \mathbb F_2$; our
choice will be the  liftings
$0,1
\in
\mathbb Z$ of
$0,1
\in
\mathbb F_2$. With this choice, $\xi(\eta) \in \Br_2(F)$ is represented by
the
$2$-cocycle $z'$ of $\Gamma$ in $F^\times \subset \overline{F}^\times$
given by 
\[
z'_{\sigma,\tau}=
\begin{cases}
a &\text{ if both 
$\chi(\sigma)$ and $\chi(\tau)$ are nontrivial,} \\ 
1 &\text{ otherwise.}
\end{cases}
\]

 Since the cohomology
class of $z_{\sigma,\tau}$ is obviously independent of the choice of
lifting $u_\sigma$, we are free to take 
\[
u_\sigma=a^{\dot\chi(\sigma)} \in F^\times,
\]  
where $\dot\chi(\sigma)$ is  the unique element of $\{0,1\} \subset
\mathbb Z$ lifting $\chi(\sigma) \in \mathbb F_2$. The $2$-cocycle 
\[
z_{\sigma,\tau}=\sqrt{u_\sigma u_\tau u_{\sigma\tau}^{-1}}
\]
then   coincides with $z'$, and the lemma is proved. 
\end{proof}

\subsection{Stiefel-Whitney classes for orthogonal representations in
arbitrary characteristic} 
Let $k$ be an algebraically closed field, and 
let $G$ be a profinite group. When the characteristic of $k$ is not $2$, Fr\"ohlich
attaches first and second
Stiefel-Whitney classes to orthogonal representations of
$G$ on quadratic spaces over $k$. In other words, given a quadratic space 
$(V,Q)$ over $k$, and a homomorphism $\varphi:G \to O(Q)$ with open kernel,
Fr\"ohlich defines
 an element $SW(\varphi)$ in  
the abelian group $1 +H^1(G,\mathbb F_2) + H^2(G,\mathbb F_2)$.  
The $i$-component of $SW(\varphi)$ ($i=1,2$) will be denoted by
$SW_i(\varphi)$. 
For $k=\mathbb C$ these classes are the traditional Stiefel-Whitney classes
$w_1,w_2$ that Deligne uses in
his paper on local epsilon factors for orthogonal representations.

Fr\"ohlich defines
the first Stiefel-Whitney class $SW_1(\varphi)$ to be the sign character
$\det\circ\varphi$.
At the moment we are assuming that the 
characteristic of $k$ is not $2$, so there is a short exact sequence 
\[
1 \to \mu_2(k) \to \widetilde{\Pin}(Q) \to O(Q) \to 1
\]
of  groups  with trivial $G$-action, giving rise to a connecting map 
\[
\partial: H^1(G,O(Q)) \to H^2(G,\mu_2(k))=H^2(G,\mathbb F_2), 
\]
and Fr\"ohlich defines $SW_2(\varphi)$ to be the image
of $\varphi$ under this connecting map. (Here  $\varphi$ is viewed as 
a  $1$-cocycle of $G$ in $O(Q)$.) 

Now we want to extend Fr\"ohlich's definitions to fields $k$ of characteristic $2$.
We put $SW_1(\varphi) := \deg \circ\varphi$, where $\deg$ is the degree
homomorphism from $O(Q)$
to $\mathbb F_2$. When the characteristic is not $2$, $\deg$ is equal to $\det$, so
our definition is compatible
with Fr\"ohlich's. We put  $SW_2(\varphi)=0$ when $k$ has characteristic $2$.  
There is no real content in this definition, but it will allow for a uniform
statement when we extend
Fr\"ohlich's result to characteristic $2$.

\subsection{Review of the spinor norm homomorphism}
\label{sub.RevSpNorm}  

 We consider a quadratic space $(V,Q)$ over $F$.  
We are interested in its orthogonal group $O=O(Q)$ and Clifford 
group $Cl=Cl(Q)$. 
There is a short exact sequence (see equation \eqref{SES.ClO})  
\[
1 \to F^\times \to Cl(F) \xrightarrow{\rho} O(F) \to 1,  
\] 
as well as a canonical character $N:Cl(F) \to F^\times$  
(see equation \eqref{eq.DefOfN}).

Recall that the spinor norm homomorphism 
\[ 
\delta:O(F) \to
F^\times/(F^\times)^2 
\]
is defined as follows. Let $x \in O(F)$ and
choose $\dot x \in Cl(F)$ such that $\rho(\dot x)=x$. Then take 
$\delta(x)$ to be the square-class of $N(\dot x)$. 

\subsection{Fr\"ohlich's result}\label{sub.FrohReview}

We want to extend a result of Fr\"ohlich \cite{Froh}  to fields 
$F$ of arbitrary characteristic.
Now consider an orthogonal representation $\varphi:\Gamma \to O(F)$. 
Once again $\Gamma$ is the absolute Galois group of $F$. 
There are two things we can do with $\varphi$. The first is to extend
scalars from $F$ to an algebraically closed field $k$ containing $F$. In
this way we obtain an orthogonal representation 
\[ 
\varphi_k:\Gamma \xrightarrow{\varphi} O(F) \hookrightarrow O(k)
\] 
 of $\Gamma$ over $k$. It is evident that the Stiefel-Whitney class
$SW(\varphi_k)$ is independent of the choice of $k$, so we may as well
abbreviate it to $SW(\varphi)$. We denote by $\overline{SW}(\varphi)$ the
image of $SW(\varphi)$ under the natural homomorphism from 
$1 +H^1(\Gamma,\mathbb F_2) + H^2(\Gamma,\mathbb F_2)$ to $\Br_2(F)_s$. 

The second thing we can do with $\varphi$ is to twist $(V,Q)$ (see subsection
\ref{sub.TwistQ} for a
review), obtaining
a new quadratic space $(V_{\varphi},Q_{\varphi})$ over $F$.  The element
$\inv(Q_{\varphi},Q) \in H^1(F,O)$ is obtained as the image of $\varphi$
under the natural map 
\[
H^1(\Gamma,O(F)) \to H^1(F,O). 
\] 
We can then consider the relative Hasse-Witt invariant $HW(Q_{\varphi},Q)
\in 
\Br_2(F)_s$. 

The following result of Fr\"ohlich  makes use of the spinor norm map 
$\delta:O(F) \to F^\times/(F^\times)^2$ and the homomorphism 
$\xi:H^1(\Gamma,F^\times/(F^\times)^2) \to \Br_2(F)$ defined earlier. 

\begin{proposition}[Fr\"ohlich]\label{prop.FrohExt}
$\overline{SW}(\varphi)$ and $HW(Q_{\varphi},Q)$ differ by the element
$\xi(\delta \circ \varphi)$ in the subgroup $\Br_2(F)$ of $\Br_2(F)_s$. 
\end{proposition}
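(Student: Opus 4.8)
The plan is to compare the two invariants componentwise in $\Br_2(F)_s = H^1(\Gamma,\mathbb F_2)\times\Br_2(F)$. The first components agree with no correction term: $HW_1(Q_\varphi,Q)=\deg_*(\inv(Q_\varphi,Q))$ is the image of the sign character $\deg\circ\varphi$, which by definition is $SW_1(\varphi)=\overline{SW}_1(\varphi)$, whereas $\xi(\delta\circ\varphi)$ lies in the subgroup $\Br_2(F)$ and so has trivial first component. Everything therefore reduces to proving $HW_2(Q_\varphi,Q)=\overline{SW}_2(\varphi)+\xi(\delta\circ\varphi)$ in $\Br_2(F)$.

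To compute $HW_2(Q_\varphi,Q)=\partial(\inv(Q_\varphi,Q))$ I would write down an explicit $2$-cocycle. By subsection \ref{sub.FrohReview} the class $\inv(Q_\varphi,Q)$ is the image of $\varphi$, viewed as a $1$-cocycle valued in $O(Q)(F)$; since $H^1(F,\mathbb G_m)$ is trivial, the map $Cl(Q)(F)\to O(Q)(F)$ is onto, so one may choose lifts $\dot\varphi(\sigma)\in Cl(Q)(F)$ of $\varphi(\sigma)$. Then $c_{\sigma,\tau}:=\dot\varphi(\sigma)\dot\varphi(\tau)\dot\varphi(\sigma\tau)^{-1}$ lies in $\mathbb G_m(F)=F^\times$ and is a $2$-cocycle representing $HW_2(Q_\varphi,Q)$. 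Applying the character $N\colon Cl(Q)\to\mathbb G_m$ of \eqref{eq.DefOfN}, which restricts to squaring on $\mathbb G_m$, and setting $n_\sigma:=N(\dot\varphi(\sigma))\in F^\times$, one gets $c_{\sigma,\tau}^2=n_\sigma n_\tau n_{\sigma\tau}^{-1}$; and by the definition of the spinor norm in subsection \ref{sub.RevSpNorm}, the homomorphism $\delta\circ\varphi$ is $\sigma\mapsto[n_\sigma]$ in $F^\times/(F^\times)^2$.

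In characteristic $2$, which is the new case, $\overline{SW}_2(\varphi)=0$ by definition, so it remains to see that $HW_2(Q_\varphi,Q)=\xi(\delta\circ\varphi)$; but square roots in $F$ are unique, so $c_{\sigma,\tau}=\sqrt{n_\sigma n_\tau n_{\sigma\tau}^{-1}}$, and Lemma \ref{lem.xiFroh} applied with $u_\sigma=n_\sigma$ says precisely that this cocycle represents $\xi(\delta\circ\varphi)$. In characteristic different from $2$ the statement is Fröhlich's theorem \cite{Froh}, which I would invoke after translating his invariants $\disc$ and $hw$ into $\Wall$, and hence into $HW$ and $\overline{SW}$, by means of Lemma \ref{lem.WallComp} and Corollary \ref{lem.ClassConversion}. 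Alternatively one can finish the cocycle computation by hand: since $x^2-n_\sigma$ is now separable one may take $m_\sigma:=\sqrt{n_\sigma}\in\overline F^\times$, the elements $\dot\varphi(\sigma)/m_\sigma$ then lie in $\ker N\subset Cl(Q)$, a form of the twofold cover $\widetilde{\Pin}(Q)$ defining $SW_2$, their obstruction cocycle $\epsilon_{\sigma,\tau}=c_{\sigma,\tau}m_{\sigma\tau}(m_\sigma m_\tau)^{-1}\in\mu_2(\overline F)$ represents $\overline{SW}_2(\varphi)$, and comparison with $c_{\sigma,\tau}$ yields $HW_2(Q_\varphi,Q)=\overline{SW}_2(\varphi)+[\kappa]$ with $\kappa_{\sigma,\tau}=m_\tau\,\sigma(m_\tau)^{-1}\in\mu_2(\overline F)$, a cocycle which Kummer theory and the description of the local symbol in subsection \ref{sub.LocSymbol} identify with $\xi(\delta\circ\varphi)$.

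The only real difficulty is bookkeeping rather than any new idea: one must fix mutually compatible conventions for twisting, for the connecting maps of \eqref{SES.ClO} and of the $\widetilde{\Pin}$-sequence, and for the local symbol, and check that $\ker N\subset Cl(Q)$ is the extension Deligne and Fröhlich use to define $SW_2$; an overall inversion of $\varphi$ introduced by the twisting convention is harmless because $\Br_2(F)$ and $F^\times/(F^\times)^2$ are $2$-torsion. It is worth noting that the argument is short in characteristic $2$ exactly because $\overline F$ contains no square root of a nonsquare — which is why the $\widetilde{\Pin}$-sequence is unavailable and $\overline{SW}_2$ is declared to vanish — so that the content collapses to Lemma \ref{lem.xiFroh}; if one prefers to cite Fröhlich in the odd case, the remaining work is exactly the dictionary between his discriminant/Hasse–Witt invariants and the uniform $\Br_2(F)_s$-valued invariants of this paper.
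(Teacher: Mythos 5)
Your proposal is correct and follows essentially the same route as the paper: the first components are matched immediately, the second component of $HW$ is computed via the $2$-cocycle $\dot\varphi(\sigma)\dot\varphi(\tau)\dot\varphi(\sigma\tau)^{-1}$ with Galois-fixed liftings to $Cl(Q)(F)$, and $N$ is applied to relate it to $\delta\circ\varphi$ so that in characteristic $2$ Lemma \ref{lem.xiFroh} finishes the argument, while in odd characteristic Fr\"ohlich's theorem is invoked and translated into the $\Br_2(F)_s$-valued language by Corollary \ref{lem.ClassConversion}. The additional by-hand verification you sketch for odd characteristic (splitting off $m_\sigma=\sqrt{n_\sigma}$, passing to the kernel of $N$, and identifying the discrepancy $m_\sigma m_\tau m_{\sigma\tau}^{-1}$, or its cohomologous representative $m_\tau\,\sigma(m_\tau)^{-1}$, with $\xi(\delta\circ\varphi)$ via Kummer theory) is a reasonable optional unpacking of Fr\"ohlich that the paper does not carry out, but it does not change the structure of the argument.
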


\begin{proof} 
When the characteristic is not $2$, an equivalent result  is  proved by 
Fr\"ohlich. 
When his result is restated in terms of relative Hasse-Witt invariants, 
it becomes the slightly different looking statement given here, 
as one can check using Corollary \ref{lem.ClassConversion}.  

Now we deal with the case when the characteristic is $2$. 
 It is clear that
$SW_1(\varphi)=HW_1(Q_{\varphi},Q)$. So the point is really to show that 
the elements 
 $\overline{SW}_2(\varphi), HW_2(Q_{\varphi},Q) \in \Br_2(F)$ differ by
$\xi(\eta)$, with
$\eta:=\delta \circ \varphi$.

Now 
$SW_2(\varphi)$ and $\overline{SW}_2(\varphi)$ are $0$,  
so we need to prove that
\[
HW_2(Q_{\varphi},Q)=\xi(\eta).
\]
To do so we make use of  the  exact sequence 
\[
1 \to F^\times \to Cl(F) \xrightarrow{\rho} O(F) \to 1.
\]
 We choose a finite Galois extension $K/F$ such
that $\varphi$ is inflated from a homomorphism 
$\Gal(K/F) \to O(F)$. For $\sigma \in
\Gal(K/F)$ we choose $\dot\varphi(\sigma) \in Cl(F)$ such that
$\rho(\dot\varphi(\sigma))=\varphi(\sigma)$. Since these liftings are
fixed by the Galois group, the Hasse-Witt invariant $HW_2(Q_{\varphi},Q)$
is represented by the $2$-cocycle 
\begin{equation}\label{eq.zANDphi}
z_{\sigma,\tau}=\dot\varphi(\sigma) \, \dot\varphi(\tau) \,
(\dot\varphi(\sigma\tau))^{-1}.
\end{equation}

Next we need to calculate $\xi(\eta)$. 
Applying the character $N$ to the equation \eqref{eq.zANDphi}, 
we find that 
\[
z_{\sigma,\tau}^2=u_\sigma \, u_\tau \,
u_{\sigma\tau}^{-1},
\]
with $u_\sigma \in F^\times$ defined by $u_\sigma=
N(\dot\varphi(\sigma))$.  
Now $\eta(\sigma)$ is equal to the square class of 
$u_\sigma$. So Lemma \ref{lem.xiFroh} implies that $\xi(\eta)$
is represented by the $2$-cocycle 
\[
\sqrt{u_\sigma u_\tau u_{\sigma\tau}^{-1}}=z_{\sigma,\tau}. 
\] 
Thus $z_{\sigma,\tau}$ represents both $HW_2(Q_{\varphi},Q)$ 
and $\xi(\eta)$, and the
proof is complete.  
\end{proof}

\subsection{Spinor norms of reflections} \label{sub.SpinNormRefl}

When applying Fr\"ohlich's result, it is useful to understand 
spinor norms of reflections. 
For $v \in V$ with $Q(v) \ne 0$, the element $v$ in the Clifford algebra 
$C(Q)$ is an odd element in the Clifford group.
It projects to the reflection $r_v \in O(Q)$. 
So the spinor norm of $r_v$ is $N(v) = v^2 = Q(v)$.

\subsection{Review of twisting of quadratic forms}\label{sub.TwistQ}

Let $V$ be an $F$-vector space. Then $\overline{F} \otimes_F V$ 
carries a semilinear smooth $\Gamma$-action, given
by $\sigma(\alpha \otimes v)=\sigma(\alpha) \otimes v$, 
having $V \subset
\overline{F} \otimes_F V$ as fixed point set.
Conversely, given a vector space $U$ over $\overline{F}$, equipped with a
semilinear smooth
action of $\Gamma$, we obtain an $F$-vector space 
$V = U^\Gamma$ and a canonical
isomorphism $\overline{F} \otimes_F V \to U$.
Informally, then, giving an $F$-vector space is the same as giving an
$\overline{F}$-vector space equipped with a semilinear
smooth Galois action. This is Galois descent theory.  

In particular, given an $F$-vector space $V$ and a continuous $1$-cocycle $\sigma
\mapsto \theta_\sigma$ of $\Gamma$
in $\Aut(V)(\overline{F})$, we get a new semilinear action $\sigma_*:=\theta_\sigma
\sigma$, and so get a new $F$-vector space
\[
V_* = \{ v \in \overline{F} \otimes_F V : \theta_\sigma(\sigma(v)) = v \quad
\forall\, \sigma \in \Gamma \}
\]
having the same dimension as $V$. One says that $V_*$ is a \emph{twist} of $V$. 

Now suppose that we have a quadratic form $Q$ on $V$, and suppose further that our
$1$-cocycle $\theta_\sigma$
takes values in the subgroup $O(Q)(\overline{F})$ of $\Aut(V)(\overline{F})$.
Because $Q$ is defined over $F$, it is fixed by $\sigma$.
Because $\theta_\sigma$ fixes $Q$, the action of $\sigma_*$ also fixes $Q$. In
particular, the function
$Q:\overline{F} \otimes_F V \to \overline{F}$ is $F$-valued on $V_*$ as well as
$V$.  In summary, twisting a quadratic space $(V,Q)$ 
actually means that we twist the 
$F$-structure on $V$, while leaving the quadratic form unchanged. 
Nevertheless, we speak of ``twisting the quadratic form $Q$ by 
the $1$-cocycle $\theta$'' and denote the twisted form by $Q_\theta$.

\section{Relative Hasse-Witt invariants for twists by $1$-cocycles in tori}

Let $(V,Q)$ be a nondegenerate quadratic space over $F$ 
with $V$ even dimensional. Let $T$ be a torus, and suppose that 
we are given an action of $T$ on $V$ by orthogonal 
transformations. In other words
we are given
an $F$-homomorphism from $T$ to the orthogonal group of $V$.  
Finally, suppose that we are given a
$1$-cocycle $t$ of $\Gamma$ in $T$. We can then twist 
$(V,Q)$ by $t$, obtaining a 
new quadratic form $Q_t$. 

Now we are going to use $t$ to define an element in $\Br_2(F)$. 
We begin by defining an extension
$U$ of $T$ by $\mu_2$. Such extensions are classified by 
$\Gamma$-invariant
elements in the $\mathbb F_2$-vector space
$X^*(T)/2X^*(T)$. We now use the orthogonal action of 
$T$ on $V$ to define a
canonical such element, which we will denote by $\lambda_V$.

Let us write the dimension of $V$ as $n=2m$. 
Because $V$ is self-contragredient as representation of $T$,   
we have $\dim V_\lambda = \dim V_{-\lambda}$ for each 
$\lambda \in X^*(T)$. (Here $V_\lambda$ is the weight space
of $\lambda$ in $V$; it is defined over the separable closure.) 
So there exist   $\lambda_1,\dots,\lambda_m \in X^*(T)$ such 
that the $n$ characters of $T$  occurring in $V$ are 
$
\lambda_1,\dots,\lambda_m,-\lambda_1,\dots,-\lambda_m.
$
The $m$-tuple $(\lambda_1,\dots,\lambda_m)$ is 
well-defined up to permutations and  sign changes. Therefore
the sum $\lambda_V:=\lambda_1 + \dots + \lambda_m$ 
yields a well-defined element of
$X^*(T)/2X^*(T)$. Our assumption
that the torus action is defined over $F$ implies 
that $\lambda_V$ is a
$\Gamma$-invariant element in $X^*(T)/2X^*(T)$.

Now, as we have already said, to $\lambda_V$ is 
associated an extension 
\[
1 \to \mu_2 \to U \to T \to 1. 
\]
This sequence of $F$-groups is exact in the fppf topology, 
so from it we obtain a connecting homomorphism 
\[
\partial_U : H^1_{\fppf}(F,T) \to H^2_{\fppf}(F,\mu_2).
\]
The source coincides with the Galois cohomology group 
$H^1(F,T)$, and the target coincides with $\Br_2(F)$, so
we may also view $\partial_U$ as a homomorphism 
\begin{equation}\label{eq.partial_U}
 \partial_U : H^1(F,T) \to \Br_2(F). 
\end{equation}

\begin{lemma} \label{lem.HWtorus} 
$HW(Q_t,Q)$ is equal to the image of $t$ under the map \eqref{eq.partial_U}. Here,
as usual, we are viewing
$\Br_2(F)$ as a subgroup of $\Br_2(F)_s$. 
\end{lemma}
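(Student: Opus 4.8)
The plan is to compare the two central extensions that compute the two sides. On the Hasse-Witt side, the relevant extension is the pullback of the super-Clifford sequence $1 \to \mathbb G_m \to Cl(Q) \xrightarrow{\rho} O(Q) \to 1$ along the orthogonal action $T \to O(Q)$; on the torus side it is the $\mu_2$-extension $1 \to \mu_2 \to U \to T \to 1$ attached to $\lambda_V \in X^*(T)/2X^*(T)$. Since $HW(Q_t,Q)$ is by definition $\partial(\inv(Q_t,Q))$, where $\inv(Q_t,Q)$ is the image of the cocycle $t$ under $H^1(\Gamma,T) \to H^1(F,O(Q))$ and $\partial$ is the connecting map for the Clifford sequence, functoriality of connecting homomorphisms reduces the problem to a statement purely about central extensions: I must show that the pullback $\mathbb E := Cl(Q) \times_{O(Q)} T$ of the Clifford sequence to $T$ pushes forward, along the squaring-to-$\mu_2$ map $\mathbb G_m \xrightarrow{(\ )^2} \mathbb G_m \supset \mu_2$ — more precisely along $N$ composed with the identification making the image land in $\Br_2$ — to an extension of $T$ by $\mu_2$ that is isomorphic to $U$. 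Equivalently, the class of $\mathbb E$ in $\Ext^1(T,\mathbb G_m)$, after the relevant reduction, matches the class of $U$ in $\Ext^1(T,\mu_2)$, and the latter is classified by $\lambda_V \in X^*(T)/2X^*(T)$.

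First I would make precise the reduction to $\mu_2$-extensions. The homomorphism $N : Cl(Q) \to \mathbb G_m$ restricts to the squaring map on the central $\mathbb G_m$, so pushing the Clifford sequence forward along $N$ produces an extension of $O(Q)$ by $\mathbb G_m$ on which the original $\mathbb G_m$ acts through its square; this is exactly the mechanism already used in the appendix to see that $\partial$ lands in $\Br_2(F)$, and it identifies $\partial(\inv(Q_t,Q))$ with the image of the $\mu_2$-gerbe obtained by pulling back to $T$ and then restricting the structure group from $\mathbb G_m$ to $\mu_2$ via the Kummer sequence. So the task becomes: compute the class in $H^1(T,\mu_2)$-sense, i.e. in $\Ext^1_{\fppf}(T,\mu_2) = X^*(T)/2X^*(T)$ (Galois-invariants), of this pulled-back extension, and check it equals $\lambda_V$.

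Second, and this is the computational heart, I would diagonalize. Over $\overline F$ choose the weight decomposition $V_{\overline F} = \bigoplus_{i=1}^m (V_{\lambda_i} \oplus V_{-\lambda_i})$ with $T$ acting on $V_{\pm\lambda_i}$ through $\pm\lambda_i$; each $V_{\lambda_i} \oplus V_{-\lambda_i}$ is a hyperbolic space paired by $B$, and the torus lands in the subtorus $\prod_i \mathbb G_m$ of $O(Q)$ acting as $\mathrm{diag}(\lambda_i, \lambda_i^{-1})$ on the $i$-th block. It then suffices to analyze the restriction of the Clifford sequence to this standard maximal torus of a product of split orthogonal groups, where everything is explicit: for a single hyperbolic plane with isotropic basis $e, f$ satisfying $B(e,f)=1$, the element $t^{ef - fe}$ type expression in $Cl(Q)$ lifts the diagonal torus element, and $N$ of that lift works out to $t$ (up to a square), so the pulled-back extension of $\mathbb G_m = \{\mathrm{diag}(t,t^{-1})\}$ is the one classified by the character $t \mapsto t \bmod 2$ — i.e. the nontrivial class in $X^*(\mathbb G_m)/2$. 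Summing over the $m$ blocks and identifying characters, the class of the extension pulled back to $T$ becomes $\lambda_1 + \dots + \lambda_m = \lambda_V$ in $X^*(T)/2X^*(T)$, as desired. The main obstacle I anticipate is bookkeeping in the Clifford algebra over $\overline F$ — getting the normalization of the lift right so that $N$ of it is $t$ and not $t^2$ or $1$ — and making sure the Galois descent is handled cleanly, i.e. that the a priori $\overline F$-computation of the extension class descends to give precisely the $F$-extension $U$; this should follow formally because both extensions are pulled back from $F$-morphisms and their $\overline F$-classes agree in the Galois-invariant group $(X^*(T)/2X^*(T))^\Gamma$, which by the rigidity of $\mu_2$-extensions of a torus is the full classifying group.
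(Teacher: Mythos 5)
Your plan is essentially the paper's proof, fleshed out: the paper reduces to the case where $T$ is a maximal torus of $O(Q)$ and then appeals, as a ``well-known fact,'' to exactly the computation you propose to carry out — that the $\mu_2$-extension of $T$ obtained by restricting the (super) Clifford/Pin cover is classified by $\lambda_V = \lambda_1 + \cdots + \lambda_m$ in $X^*(T)/2X^*(T)$. Your hyperbolic-plane calculation (with lift $1 + (t-1)ef$ satisfying $N = t$) is the right normalization and supplies the content the paper leaves implicit.

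One small correction: you open by writing ``$HW(Q_t,Q)$ is by definition $\partial(\inv(Q_t,Q))$,'' but $HW$ is the pair $(HW_1, HW_2) \in \Br_2(F)_s$, and $\partial(\inv(Q_t,Q))$ is only the $HW_2$ component. Since $\partial_U(t)$ lies in $\Br_2(F) \subset \Br_2(F)_s$ with trivial first coordinate, you must also check $HW_1(Q_t,Q) = \deg_*(\inv(Q_t,Q)) = 0$. This is immediate — the torus $T$ lands in $SO(Q) = \ker(\deg)$, as is clear from your own weight decomposition — but it should be stated, and it is the first thing the paper's proof verifies.
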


\begin{proof}

We must prove that 
\begin{enumerate}
\item 
$HW_1(Q_t,Q) = 0$,
\item 
$HW_2(Q_t,Q) = \partial_U(t)$. 
\end{enumerate}

Without loss of generality we may assume that $T$ is a maximal torus in $O(Q)$. 
The first item follows from the fact that $T$  lies in the kernel $SO(Q)$ 
of the degree homomorphism $O(Q) \to \mathbb F_2$. The second item follows from 
the well-known fact that the element of $X^*(T)/2X^*(T)$ associated to 
\[
1 \to \mu_2 \to \Spin(Q) \to O(Q) \to 1
\]
is $\lambda_V$. 
\end{proof}

\begin{corollary}\label{cor.EpTor}
Now assume that $F$ is local, so that Weil indices are defined. Then 
\[
\gamma(Q_t,\psi) \gamma(Q,\psi)^{-1} = 
\partial_U(t) \in \Br_2(F) = \{ \pm 1
\}.
\]
\end{corollary}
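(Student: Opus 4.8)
The plan is to reduce the statement to the evaluation of a single Weil index and then combine Lemma \ref{lem.HWtorus} with the dictionary between Weil indices and Wall invariants provided by Proposition \ref{prop.combo}.

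First I would use that, by Proposition \ref{prop.Weil}(3), the assignment $Q\mapsto\gamma(Q,\psi)$ factors through a character $\gamma_\psi$ of the even Witt group $W_0(F)$, so that
\[
\gamma(Q_t,\psi)\,\gamma(Q,\psi)^{-1}=\gamma_\psi\bigl([Q_t]-[Q]\bigr).
\]
Next I would compute the Wall class of $[Q_t]-[Q]$. Since $\Wall\colon W_0(F)\to\Br_s(F)$ is a homomorphism, $\Wall([Q_t]-[Q])=\Wall(Q_t)-\Wall(Q)$, and by Lemma \ref{lem.HWvs Wall}(2) this equals $-HW(Q_t,Q)$. By Lemma \ref{lem.HWtorus}, $HW(Q_t,Q)=\partial_U(t)$, which lies in the subgroup $\Br_2(F)$ of $\Br_2(F)_s$; in particular its sign-character component vanishes, so negating it in $\Br_s(F)$ changes nothing. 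Hence $\Wall([Q_t]-[Q])=(0,\partial_U(t))$, i.e.\ $\Wall_1$ is the trivial character and $\Wall_2=\partial_U(t)$.

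Finally I would apply Proposition \ref{prop.combo} to a quadratic space $Q'$ representing $[Q_t]-[Q]$, for instance $Q'=Q_t\oplus(-Q)$: it gives $\gamma(Q',\psi)=\epsilon_L(\chi_{Q'},\psi)\,\zeta_{Q'}$, where $\chi_{Q'}=\Wall_1(Q')$ and $\zeta_{Q'}\in\{\pm1\}$ records whether $\Wall_2(Q')$ is nontrivial. By the computation above $\chi_{Q'}$ is the trivial character, whose epsilon factor is $1$, and $\zeta_{Q'}$ is precisely the image of $\partial_U(t)\in\Br_2(F)$ under the canonical isomorphism $\Br_2(F)=\{\pm1\}$ of a local field (trivial class $\mapsto 1$, nontrivial class $\mapsto -1$) that is implicit in the statement. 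This gives $\gamma(Q_t,\psi)\gamma(Q,\psi)^{-1}=\partial_U(t)$.

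I do not expect a genuine obstacle: the argument is a direct concatenation of the cited results. The only points requiring care are the two identifications — that $-HW(Q_t,Q)$ and $HW(Q_t,Q)$ agree inside $\Br_s(F)$, valid because the class in question already lies in the $2$-torsion subgroup with trivial $H^1$-component, and that $\Br_2(F)\cong\{\pm1\}$ for a local field, under which $\zeta_{Q'}$ corresponds to $\partial_U(t)$.
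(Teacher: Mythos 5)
Your proof is correct and takes essentially the same route as the paper's: the paper's proof of this corollary is simply the citation ``This follows from Lemma \ref{lem.HWtorus} together with Lemmas \ref{prop.combo} and \ref{lem.HWvs Wall}(2),'' and you have fleshed out exactly that concatenation, including the minor but necessary observation that $HW(Q_t,Q)$ lies in the $2$-torsion subgroup $\Br_2(F)\subset\Br_2(F)_s$ with trivial $H^1$-component so the sign ambiguity disappears.
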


\begin{proof}
This follows from Lemma \ref{lem.HWtorus} together with 
Lemmas  \ref{prop.combo} and \ref{lem.HWvs Wall}(2). 
\end{proof}

\section{Standard quadratic form $Q_2$ on 
$\Lie(\mathbb G_{\ssc})$ over $\mathbb Z$} \label{app.StQuadForm}

\subsection{Notation}

In this appendix we consider a split group 
$\mathbb G_{\ssc}$ over $\mathbb Z$ that is semisimple
and simply connected.  In it we choose a 
split maximal torus $\mathbb T_{\ssc}$, and  a 
Borel subgroup $\mathbb B_{\ssc}$ 
containing $\mathbb T_{\ssc}$. For each simple root 
$\alpha$ of $\mathbb T_{\ssc}$ we choose an isomorphism
$\eta_\alpha$ from the additive group $\mathbb G_a$ 
to the root subgroup $\mathbb G_\alpha$. Thus 
 $(\mathbb G_{\ssc},\mathbb B_{\ssc},
 \mathbb T_{\ssc},\{\eta_\alpha\})$ is 
a pinned group over $\mathbb Z$. Moreover, 
we assume that the root system  $R$ 
of $\mathbb T_{\ssc}$ in $\mathbb G_{\ssc}$ is irreducible. 
We write $Z(\mathbb G_{\ssc})$ for the center
of $\mathbb G_{\ssc}$, and $\mathbb G_{\ad}$ for the 
adjoint group $\mathbb G_{\ssc}/Z(\mathbb G_{\ssc}) $. 

We write $W$ for the Weyl group of $\mathbb T_{\ssc}$ 
in $\mathbb G_{\ssc}$, and put
\[
\Omega_0 := \Aut(\mathbb G_{\ssc},\mathbb B_{\ssc},\mathbb
T_{\ssc},\{\eta_\alpha\}).
\]
It is well-known that $\Aut(\mathbb G_{\ssc}) = 
\mathbb G_{\ad} \rtimes \Omega_0$. 
The group $\Omega := W \rtimes \Omega_0$ 
acts on $\mathbb T_{\ssc}$.

\subsection{Standard quadratic form $Q_1$ on 
$\Lie (\mathbb T_{\ssc})$}

We put 
\[
\mathfrak a := X_*(\mathbb T_{\ssc})_{\mathbb Q} = 
X_*(\mathbb T_{\ad})_{\mathbb
Q}.
\]
 So we are now viewing 
\[
X_*(\mathbb T_{\ssc})  \subset X_*(\mathbb T_{\ad}) 
\]
 as an inclusion of lattices in a common rational vector space 
 $\mathfrak a$. 

Because the root system $R$ is irreducible, the 
space of $W$-invariant quadratic forms
on $\mathfrak a$ is one dimensional. As basis 
element we choose the unique 
$W$-invariant quadratic form $Q_1$  whose 
values on coroots lie in the set $\{1,\ell \}$, where 
 \[
\ell = 
\begin{cases}
1 &\text{ if $R$ is of type $A,D,E$}, \\
2 &\text{ if $R$ is of type $B_n,C_n,F_4$ with $n \ge 2$}, \\
3 &\text{ if $R$ is of type $G_2$}. \\
\end{cases}
\]
So, when all  coroots have the same 
length, $Q_1$ takes the value $1$ on  them.  
 Otherwise $Q_1$ takes the value $1$ on short coroots, and 
takes the value $\ell$ on long coroots. 
The action of $\Omega$ preserves $Q_1$.  
We put $\ell(\alpha^\vee):=Q_1(\alpha^\vee) \in \{1,\ell \}$. 
 
Observe that $\ell$ is the same  
for $R^\vee$ as it is for $R$. The bijection $\alpha \mapsto \alpha^\vee$ from $R$
to $R^\vee$ sends long roots to
short coroots, and vice versa, so there is an equality 
\begin{equation}\label{eq.rtcrt}
\ell(\alpha^\vee)\ell(\alpha) = \ell. 
\end{equation}

Now polarize $Q_1$ to obtain an  
$\Omega$-invariant symmetric bilinear form $B_1$ on 
$\mathfrak a$. Thus $B_1$ and $Q_1$ are related by the identities 
 $B_1(x,y)=Q_1(x+y)-Q_1(x)-Q_1(y)$ and  
 $Q_1(x)=\frac{1}{2}B_1(x,x)$.   We may also view $B_1$  
 as an isomorphism
$B_1:\mathfrak a \to \mathfrak a^*$. It will always 
be clear from context which 
interpretation of $B_1$ is meant; the two interpretations 
are related by the identity
\[
B_1(x,y) =\langle B_1(x), y \rangle. 
\]
(We always use $\langle \cdot,\cdot \rangle$ to denote the canonical pairing between a module and its linear dual.)

From now on we use $B_1$ to
identify $\mathfrak a^*$ with $\mathfrak a$.  This allows us to view roots
as elements in
$\mathfrak a$.   

\begin{lemma}\label{lem.ellalph} 
The following statements hold. 
\begin{enumerate}
\item There is an equality $\alpha^\vee=\ell(\alpha^\vee) \alpha \in \mathfrak a$.\item There is an equality  $\ell
Q_1(\alpha)=\ell(\alpha)$. 
\end{enumerate}
\end{lemma}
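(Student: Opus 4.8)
The plan is to derive both statements from a single application of the $W$-invariance of $B_1$, applied to the reflection $s_\alpha \in W$. First I would recall that $s_\alpha$ acts on $\mathfrak a$ by $s_\alpha(x) = x - \langle \alpha, x\rangle\,\alpha^\vee$ and that $\langle \alpha,\alpha^\vee\rangle = 2$. Since $B_1$ is $\Omega$-invariant (hence $W$-invariant) and $s_\alpha^2 = 1$, one has $B_1(s_\alpha x, y) = B_1(x, s_\alpha y)$ for all $x,y \in \mathfrak a$. Expanding both sides with the reflection formula and using the symmetry of $B_1$, the $B_1(x,y)$ terms cancel and one is left with
\[
\langle \alpha, x\rangle\, B_1(\alpha^\vee, y) = \langle \alpha, y\rangle\, B_1(\alpha^\vee, x) \qquad \text{for all } x,y \in \mathfrak a.
\]
Specializing $y = \alpha^\vee$ and dividing by $2$ gives $B_1(\alpha^\vee, x) = Q_1(\alpha^\vee)\,\langle \alpha, x\rangle$ for every $x$; that is, $B_1(\alpha^\vee) = \ell(\alpha^\vee)\,\alpha$ as elements of $\mathfrak a^*$. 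Under the identification of $\mathfrak a^*$ with $\mathfrak a$ via $B_1$, this is precisely the assertion $\alpha^\vee = \ell(\alpha^\vee)\,\alpha$ of part (1).

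For part (2) I would feed part (1) back into $Q_1$. Viewing $\alpha$ as an element of $\mathfrak a$, part (1) reads $\alpha = \ell(\alpha^\vee)^{-1}\alpha^\vee$, so $Q_1(\alpha) = \ell(\alpha^\vee)^{-2}\,Q_1(\alpha^\vee) = \ell(\alpha^\vee)^{-1}$, using $Q_1(\alpha^\vee) = \ell(\alpha^\vee)$. Multiplying by $\ell$ and invoking the identity $\ell(\alpha^\vee)\ell(\alpha) = \ell$ from \eqref{eq.rtcrt} yields $\ell\,Q_1(\alpha) = \ell/\ell(\alpha^\vee) = \ell(\alpha)$, as claimed.

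There is no serious obstacle here; the computation is short and formal. The one point demanding care is bookkeeping with the identifications: distinguishing the canonical pairing $\langle\cdot,\cdot\rangle$ between $\mathfrak a$ and $\mathfrak a^*$ from the form $B_1$, and remembering that after the identification $\mathfrak a^* \cong \mathfrak a$ the symbol ``$\alpha$'' denotes $B_1^{-1}$ of the root, so that a scalar equality such as $Q_1(\alpha) = \ell(\alpha^\vee)^{-1}$ really concerns $Q_1$ evaluated on that preimage. The non-degeneracy (indeed positivity) of $Q_1$ on coroots makes the divisions above legitimate.
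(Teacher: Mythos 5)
Your proof is correct. The paper states Lemma~\ref{lem.ellalph} without proof (the appendix omits most proofs as review material), and your argument supplies the standard one: deriving $B_1(\alpha^\vee,\cdot) = Q_1(\alpha^\vee)\,\langle\alpha,\cdot\rangle$ from the $W$-invariance of $B_1$ applied to $s_\alpha$, then feeding the resulting identity $\alpha^\vee=\ell(\alpha^\vee)\alpha$ together with \eqref{eq.rtcrt} into $Q_1$ to get part~(2). Both the algebra and the bookkeeping about which side of the identification $\mathfrak a^*\cong\mathfrak a$ each symbol lives on are handled correctly, so there is nothing to fix.
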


\begin{lemma} \label{lem.intQ} 
The  integrality properties of $Q_1$ and $B_1$ are as follows.  
\begin{enumerate}
\item $Q_1$ takes integral values on the lattice $X_*(T_{\ssc})$ in
$\mathfrak a$. 
\item $B_1(x,y) \in \mathbb Z$ when $x \in X_*(T_{\ssc})$, $y \in
X_*(T_{\ad})$. 
\item $\ell Q_1$ takes integral values on the lattice $X^*(T_{\ad})$ in
$\mathfrak a$. 
\item $\ell B_1(x,y) \in \mathbb Z$ when $x \in X^*(T_{\ssc})$, $y \in
X^*(T_{\ad})$. 
\end{enumerate}
\end{lemma}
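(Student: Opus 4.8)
The plan is to deduce all four statements from a single computation of the value of $B_1$ on coroots, combined with the standard duality among the root, coroot, weight and coweight lattices. I will use throughout the elementary fact that a quadratic form $q$ on a $\mathbb Q$-vector space is $\mathbb Z$-valued on a lattice $\Lambda = \sum_i \mathbb Z v_i$ once $q(v_i) \in \mathbb Z$ for all $i$ and $b(v_i, v_j) \in \mathbb Z$ for all $i, j$, where $b$ is the polarization of $q$; this is immediate from $q\bigl(\sum_i n_i v_i\bigr) = \sum_i n_i^2 q(v_i) + \sum_{i<j} n_i n_j b(v_i, v_j)$. It will be applied with $q = Q_1$ for part (1) and with $q = \ell Q_1$ (whose polarization is $\ell B_1$) for part (3). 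Recall also that $X_*(\mathbb T_{\ssc})$ is the coroot lattice, $X^*(\mathbb T_{\ad})$ the root lattice, $X^*(\mathbb T_{\ssc})$ the weight lattice and $X_*(\mathbb T_{\ad})$ the coweight lattice, that under the canonical pairing $\mathfrak a^* \times \mathfrak a \to \mathbb Q$ the weight and coroot lattices are $\mathbb Z$-dual to each other and the root and coweight lattices are $\mathbb Z$-dual to each other, and in particular that the Cartan integers $\langle\alpha, \beta^\vee\rangle$ are integers.

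The computation everything rests on is the formula $B_1(x, \alpha^\vee) = Q_1(\alpha^\vee)\langle\alpha, x\rangle$ for $x \in \mathfrak a$; it follows from the $W$-invariance of $B_1$ by applying invariance under $s_\alpha$ (using $s_\alpha(\alpha^\vee) = -\alpha^\vee$ and $s_\alpha(x) = x - \langle\alpha, x\rangle\alpha^\vee$). Read through the identification $\mathfrak a^* \cong \mathfrak a$ afforded by $B_1$, this says $B_1(\alpha^\vee) = Q_1(\alpha^\vee)\,\alpha = \ell(\alpha^\vee)\,\alpha$ in $\mathfrak a^*$, which is Lemma~\ref{lem.ellalph}(1); under that same identification a root $\alpha$ is viewed in $\mathfrak a$ as $\alpha^\vee/\ell(\alpha^\vee)$. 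Together with $\ell(\alpha^\vee)\ell(\alpha) = \ell$ from \eqref{eq.rtcrt} and Lemma~\ref{lem.ellalph}(2), I get, for roots $\alpha, \beta$,
\[
B_1(\alpha^\vee, \beta^\vee) = Q_1(\alpha^\vee)\langle\alpha, \beta^\vee\rangle, \qquad
\ell Q_1(\alpha) = \ell(\alpha), \qquad
\ell B_1(\alpha, \beta) = \ell(\beta)\langle\alpha, \beta^\vee\rangle,
\]
where $\langle\alpha, \beta^\vee\rangle$ always denotes a Cartan integer.

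The four parts then follow. For (1), $X_*(\mathbb T_{\ssc})$ is generated by the coroots, and $Q_1(\alpha^\vee) = \ell(\alpha^\vee) \in \{1, \ell\}$ while $B_1(\alpha^\vee, \beta^\vee) = \ell(\alpha^\vee)\langle\alpha, \beta^\vee\rangle$ are integers. For (3), $X^*(\mathbb T_{\ad})$ is generated (inside $\mathfrak a$) by the roots, and $\ell Q_1(\alpha) = \ell(\alpha) \in \{1, \ell\}$ while $\ell B_1(\alpha, \beta) = \ell(\beta)\langle\alpha, \beta^\vee\rangle$ are integers. For (2), note that the identity $B_1(\alpha^\vee) = Q_1(\alpha^\vee)\alpha$ shows $B_1$ carries the coroot lattice $X_*(\mathbb T_{\ssc})$ into the root lattice $X^*(\mathbb T_{\ad})$; hence for $x \in X_*(\mathbb T_{\ssc})$ and $y \in X_*(\mathbb T_{\ad})$ the number $B_1(x, y) = \langle B_1(x), y\rangle$ pairs an element of the root lattice against one of the dual coweight lattice, so lies in $\mathbb Z$. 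For (4), if $x \in X^*(\mathbb T_{\ssc})$ is viewed in $\mathfrak a$ via $B_1^{-1}$ then $B_1(x)$ is the original weight, so for a root $\beta = \beta^\vee/\ell(\beta^\vee)$ we get $\ell B_1(x, \beta) = \frac{\ell}{\ell(\beta^\vee)}\langle B_1(x), \beta^\vee\rangle = \ell(\beta)\langle B_1(x), \beta^\vee\rangle \in \mathbb Z$, since this pairs a weight against a coroot; as $\ell B_1(x, \cdot)$ is linear and the roots generate $X^*(\mathbb T_{\ad})$, part (4) follows.

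I do not expect a real obstacle here: this is elementary root-system combinatorics once the formula for $B_1(x, \alpha^\vee)$ is in place. The only thing to watch is the bookkeeping around the identification $\mathfrak a^* \cong \mathfrak a$ via $B_1$ — remembering that in (3) and (4) the root and weight lattices are transported into $\mathfrak a$ through $B_1^{-1}$, so that a root $\alpha$ becomes $\alpha^\vee/\ell(\alpha^\vee)$ — and keeping straight which pairing's integrality is being invoked in each case: Cartan integers in (1) and (3), root–coweight duality in (2), and weight–coroot duality in (4).
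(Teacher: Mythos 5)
The paper states Lemma~\ref{lem.intQ} without proof --- it is in a review appendix where, as the introduction notes, most proofs are omitted --- so there is no argument in the text to compare yours against. Your proof is correct. The key identity $B_1(x,\alpha^\vee) = Q_1(\alpha^\vee)\langle\alpha,x\rangle$ is properly derived from $W$-invariance under the reflection $s_\alpha$; from it you correctly obtain $B_1(\alpha^\vee)=\ell(\alpha^\vee)\alpha$, and the four deductions are sound. Parts (1) and (3) follow from the integrality criterion applied to a spanning set, using that the (simple) coroots generate $X_*(\mathbb T_{\ssc})$ and the (simple) roots, transported to $\mathfrak a$ via $B_1^{-1}$ as $\alpha^\vee/\ell(\alpha^\vee)$, generate $X^*(\mathbb T_{\ad})$ there, with Cartan integers supplying the cross terms; part (2) follows because $B_1(\alpha^\vee)=\ell(\alpha^\vee)\alpha$ places $B_1\bigl(X_*(\mathbb T_{\ssc})\bigr)$ inside the root lattice, which is $\mathbb Z$-dual to the coweight lattice; and part (4) reduces to $\ell(\beta)\langle x,\beta^\vee\rangle$ with $x$ a weight and $\beta^\vee$ a coroot. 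The bookkeeping through the identification $\mathfrak a^*\cong\mathfrak a$ is handled consistently throughout.
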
 In the third and fourth parts, one must bear in mind  
that we have identified 
$\mathfrak a^*$ with $\mathfrak a$, so that  
$X^*(T_{\ssc})$, $X^*(T_{\ad})$ may be viewed as lattices in
$\mathfrak a$.

\subsection{The standard quadratic form $Q_2$ on $\Lie(\mathbb G_{\ssc})$}
\label{sub.Q_2}

The space of $\mathbb G_{\ad}$-invariant 
quadratic forms on $\Lie(\mathbb G_{\ssc})$ 
is an abelian group that is free of rank $1$.
It has a unique generator $Q_2$  which restricts to 
$Q_1$ on $\Lie(\mathbb T_{\ssc}) = 
X_*(\mathbb T_{\ssc})$. Moreover $Q_2$ is invariant under
all automorphisms of $\mathbb G_{\ssc}$. 

This is well-known, but we need to recall how it works. 
Start by extending scalars
to $\mathbb Q$.
The space of invariant quadratic forms 
on $\Lie(\mathbb G_{\ssc})_\mathbb Q$ is
$1$-dimensional,
and any nonzero invariant form (e.g.~the Killing form) 
is nondegenerate over $\mathbb Q$.
Also, any invariant form is actually invariant under 
all automorphisms of $\mathbb G_{\ssc}$,
since this is obviously true for the Killing form. 

Now we decompose the Lie algebra of $\mathbb G_{\ssc}$ as 
\begin{equation}\label{eq.DSDgtv}
\Lie(\mathbb G_{\ssc}) = \Lie(\mathbb T_{\ssc}) \oplus \mathbb V,
\end{equation}
where 
\begin{equation} \label{eq.DSDvvv}
\mathbb V = \bigoplus_{\alpha > 0} \bigl(\Lie(\mathbb G_\alpha) 
\oplus \Lie(\mathbb G_{-\alpha})\bigr).
\end{equation}
Both  \eqref{eq.DSDgtv} and  \eqref{eq.DSDvvv} are orthogonal 
decompositions with respect to any invariant quadratic form. 

The restriction 
to $\Lie(\mathbb T_{\ssc}) = X_*(\mathbb T_{\ssc})$ 
of any nonzero invariant form
is $W$-invariant, hence
is a nonzero rational multiple of $Q_1$. So it is clear that 
there exists a unique invariant
quadratic form $Q_2$ on 
$\Lie(\mathbb G_{\ssc})_\mathbb Q$  whose restriction to 
$\Lie(\mathbb T_{\ssc}) = X_*(\mathbb T_{\ssc})$ is $Q_1$.

Next we are going to remind the reader why $Q_2$ 
takes integral values on the lattice $\Lie(\mathbb G_{\ssc})$ in
$\Lie(\mathbb G_{\ssc})_\mathbb Q$. 
The restriction of $Q_2$ to $\Lie(\mathbb T_{\ssc})$ is $Q_1$,   
which takes integral values, so
we just need to prove the integrality of the restriction 
of $Q_2$ to each subspace 
$\Lie(\mathbb G_\alpha) \oplus \Lie(\mathbb G_{-\alpha})$.  

So we need to understand what happens for $SL_2$, 
with diagonal maximal torus. Then $Q_2$ is  the quadratic form 
whose value on the matrix 
\[
\begin{bmatrix}
a & b \\
c & -a 
\end{bmatrix}
\]
is the negative of its determinant, namely $ a^2 + bc$. 
Observe that the
restriction of $Q_2$ to the direct sum of the two root spaces is
the quadratic form $bc$. We can put this answer in 
more intrinsic terms by
observing that
for $v=(x,y) \in \Lie(\mathbb G_\alpha) \oplus \Lie(\mathbb G_{-\alpha})$ the value
of $Q_2$ on $(x,y)$ is
equal to $c$, 
where $c$ is the unique integer such that the Lie bracket $[x,y]$ is equal to
$cH_\alpha$.
(We write 
$H_\alpha$ for the coroot $\alpha^\vee$ when we are viewing it in $\Lie(\mathbb
T_{\ssc})$ rather
than $X_*(\mathbb T_{\ssc})$.) Moreover $Q_2(H_\alpha)=1$ for both roots of $SL_2$.

Returning to the general case, there is still an obvious $\mathbb
T_{\ssc}$-invariant quadratic
form $Q_{\pm\alpha}$ on 
$\Lie(\mathbb G_\alpha) \oplus \Lie(\mathbb G_{-\alpha})$,
namely the one whose value
on $(x,y)$ is equal to the unique integer $c$ 
such that the Lie bracket $[x,y]$ is
equal to $cH_\alpha$.
The restriction of $Q_2$ to 
$\Lie(\mathbb G_\alpha) \oplus \Lie(\mathbb
G_{-\alpha})$ is necessarily
some scalar times  $Q_{\pm\alpha}$. Our computation 
for $SL_2$ shows that 
the scalar must be the value of $Q_2$ on $H_\alpha$, 
namely $\ell(\alpha^\vee)$. In any case $Q_2$ does 
take integral values on $\Lie(\mathbb G_{\ssc})$. 

\subsection{Definition of the nondegenerate quadratic form 
$Q_\mathbb V$ on $\mathbb V$} \label{sub.QVdef}

In the last subsection, imitating what we did in subsection
\ref{subsub.ThirdIngredient}, we
defined nondegenerate quadratic forms $Q_{\pm\alpha}$ 
for each unordered pair $\{\pm\alpha\}$.
We now define a nondegenerate quadratic form 
$Q_\mathbb V$ on $\mathbb V$ as the direct sum of
the various $Q_{\pm\alpha}$. In the next subsection 
we compare $Q_\mathbb V$ with 
the restriction of $Q_2$ to $\mathbb V$.

\subsection{The restriction of $Q_2$ to $\mathbb V$} \label{sub.Q_VvsQ_2}

Our review of how $Q_2$ is constructed gives us a 
good understanding of the restriction of $Q_2$ to $\mathbb V$.
This restriction  is equal to $Q_\mathbb V$ if and only if  
 the root system is simply laced. When the root system is not 
simply laced,    $\mathbb V$ is the orthogonal direct sum 
of $\mathbb V'$ and $\mathbb V''$, where $\mathbb V'$ is 
the direct sum of the root spaces for the various
long roots, and $\mathbb V''$ is the direct sum 
of the root spaces for the various short roots.
We denote by $Q_{\mathbb V'}$ (resp., $Q_{\mathbb V''}$) 
the restriction of $Q_{\mathbb V}$ to $\mathbb V'$ 
(resp., $\mathbb V''$). 
The computations we made while discussing $Q_2$ 
show that  
\begin{itemize}
\item
 the restriction of $Q_2$ to $\mathbb V'$ is equal to  
$Q_{\mathbb V'}$,  
\item 
 the restriction of $Q_2$ to $\mathbb V''$ is equal to  
 $\ell Q_{\mathbb V''}$. 
\end{itemize}
(Long roots correspond to short coroots, and on these $Q_2$ takes the value $1$.)

In the simply laced case it is convenient to make the convention that $\mathbb V'
=\mathbb V$ and
$\mathbb V'' = 0$. With this convention we have $\mathbb V = \mathbb V' \oplus
\mathbb V''$ in
all cases.

\section{Some lemmas about lattices in rational quadratic spaces}\label{sub.lemmas}
The next lemmas are quite general in nature. In the body of the paper we apply them
to the lattices $\Lie(\mathbb G)$, $\Lie(\mathbb T)$, $\mathbb V$, $\mathbb V'$ and
$\mathbb V''$.

\subsection{Notation}

In this appendix we consider a triple  $(\Lambda,Q,\ell)$ that consists of 
\begin{itemize}
\item
a $\mathbb Z$-lattice $\Lambda$,
\item
 a nondegenerate quadratic form 
 $Q:\Lambda_\mathbb Q \to  \mathbb Q$,   
\item 
a positive integer $\ell$, 
\end{itemize}
subject to the requirement that 
 \begin{itemize}
\item $Q$ takes integral values on $\Lambda$, and 
\item $\ell Q$ takes integral values on $\Lambda^\perp$. 
\end{itemize} 
 As always, $\Lambda^\perp$ is the perpendicular lattice 
with respect to the symmetric bilinear form $B$ obtained by polarizing $Q$, i.e. 
\[
\Lambda^\perp = \{ x \in \Lambda_\mathbb Q : B(x,y) \in \mathbb Z \quad\forall\, y
\in \Lambda \}.
\]
 Once again we remind the reader that $B(x,y)=Q(x+y)-Q(x)-Q(y)$.

\subsection{The induced quadratic forms on $\Lambda/\ell\Lambda^\perp$ and
$\Lambda^\perp/\Lambda$}

\begin{lemma} \label{lem.barQ}
The following statements hold. 
\begin{enumerate}

\item $\ell \Lambda^\perp \subset \Lambda \subset \Lambda^\perp \subset
\ell^{-1}\Lambda$. 

\item Let $\pi':\Lambda \twoheadrightarrow 
\Lambda/\ell\Lambda^\perp$ denote the canonical surjection. Then there
exists a unique quadratic form
$Q':\Lambda/\ell\Lambda^\perp \to \mathbb Z/\ell\mathbb Z$ on
the $\mathbb Z/\ell\mathbb Z$-module $\Lambda/\ell\Lambda^\perp$  such
that, for all $x \in \Lambda$, the value of $Q'$ on
$\pi'(x)$ is equal to the reduction modulo $\ell$ of $Q(x)$. 

\item Let $\pi'':\Lambda^\perp \twoheadrightarrow 
\Lambda^\perp/\Lambda$ denote the canonical surjection. Then there exists
a unique quadratic form
$Q'':\Lambda^\perp/\Lambda \to \mathbb Z/\ell\mathbb Z$ on the
$\mathbb Z/\ell\mathbb Z$-module $\Lambda^\perp/\Lambda$  such that, for
all $x \in \Lambda^\perp$, the value of $Q''$ on
$\pi''(x)$ is equal to the reduction modulo $\ell$ of $\ell Q(x)$. 

\item The quadratic form $Q$ on $\Lambda$ becomes 
nondegenerate over $\mathbb Z[1/\ell]$. 

\item If $\ell$ is prime, then the quadratic forms $Q'$ and
$Q''$ are nondegenerate over the field $\mathbb Z/\ell\mathbb
Z$.  
\end{enumerate}
\end{lemma}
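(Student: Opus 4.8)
The plan is to prove (1) first and then deduce the rest by short direct computations. For (1), the inclusion $\Lambda \subset \Lambda^{\perp}$ is immediate: for $x,y \in \Lambda$ the polarization identity $B(x,y) = Q(x+y) - Q(x) - Q(y)$ together with the integrality of $Q$ on $\Lambda$ gives $B(x,y) \in \mathbb{Z}$. For $\ell\Lambda^{\perp} \subset \Lambda$, I would use the standard fact that $(\Lambda^{\perp})^{\perp} = \Lambda$ (a full $\mathbb{Z}$-lattice equals its double dual with respect to the nondegenerate form $B$, once $\Lambda_{\mathbb{Q}}^{*}$ is identified with $\Lambda_{\mathbb{Q}}$ via $B$); it therefore suffices to check that $\ell x$ pairs integrally with every element of $\Lambda^{\perp}$ whenever $x \in \Lambda^{\perp}$, and this holds because $\ell B(x,y) = \ell Q(x+y) - \ell Q(x) - \ell Q(y) \in \mathbb{Z}$ for $x,y \in \Lambda^{\perp}$ (all of $x$, $y$, $x+y$ lie in $\Lambda^{\perp}$, where $\ell Q$ is integral). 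The third inclusion $\Lambda^{\perp} \subset \ell^{-1}\Lambda$ is merely a rewriting of $\ell\Lambda^{\perp} \subset \Lambda$. Part (4) is then immediate: inverting $\ell$ collapses the chain $\ell\Lambda^{\perp} \subset \Lambda \subset \Lambda^{\perp}$, so $\Lambda \otimes \mathbb{Z}[1/\ell] = \Lambda^{\perp}\otimes \mathbb{Z}[1/\ell]$, which says exactly that $B$ is a perfect pairing on $\Lambda\otimes\mathbb{Z}[1/\ell]$, i.e.\ that $Q$ becomes nondegenerate over $\mathbb{Z}[1/\ell]$.

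For (2) and (3) all the content is the well-definedness of the reduced forms; homogeneity, bilinearity of the polar forms, and uniqueness are automatic from surjectivity of $\pi'$, $\pi''$ and the fact that reduction modulo $\ell$ is a ring homomorphism. For (2), if $x \in \Lambda$ and $\lambda \in \Lambda^{\perp}$ then
\[
Q(x+\ell\lambda) = Q(x) + \ell B(x,\lambda) + \ell\cdot\bigl(\ell Q(\lambda)\bigr),
\]
and both $B(x,\lambda)$ (because $x \in \Lambda$, $\lambda \in \Lambda^{\perp}$) and $\ell Q(\lambda)$ are integers, so $Q(x+\ell\lambda) \equiv Q(x) \pmod{\ell}$; this produces the desired $Q'$, whose polar form is the reduction mod $\ell$ of $B|_{\Lambda\times\Lambda}$. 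For (3), if $x \in \Lambda^{\perp}$ and $\mu \in \Lambda$ then
\[
\ell Q(x+\mu) = \ell Q(x) + \ell B(x,\mu) + \ell Q(\mu),
\]
and the last two terms lie in $\ell\mathbb{Z}$ since $B(x,\mu) \in \mathbb{Z}$ and $Q(\mu) \in \mathbb{Z}$, so $\ell Q(x+\mu) \equiv \ell Q(x) \pmod{\ell}$; this produces $Q''$, whose polar form is the reduction mod $\ell$ of $\ell B|_{\Lambda^{\perp}\times\Lambda^{\perp}}$, which is integral by part (1).

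For (5), with $\ell$ prime so that $\mathbb{Z}/\ell\mathbb{Z}$ is a field, I would simply compute the radicals of the two polar forms, using that $\Lambda^{\perp}$ is the $B$-dual lattice of $\Lambda$. An element $\bar x \in \Lambda/\ell\Lambda^{\perp}$ lies in the radical of the polar form of $Q'$ iff $B(x,y) \in \ell\mathbb{Z}$ for all $y \in \Lambda$, i.e.\ iff $\ell^{-1}x \in \Lambda^{\perp}$, i.e.\ iff $x \in \ell\Lambda^{\perp}$, i.e.\ iff $\bar x = 0$; and $\bar x \in \Lambda^{\perp}/\Lambda$ lies in the radical of the polar form of $Q''$ iff $B(x,y) \in \mathbb{Z}$ for all $y \in \Lambda^{\perp}$, i.e.\ iff $x \in (\Lambda^{\perp})^{\perp} = \Lambda$, i.e.\ iff $\bar x = 0$. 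Hence both polar forms are nondegenerate, so $Q'$ and $Q''$ are nondegenerate quadratic forms over $\mathbb{Z}/\ell\mathbb{Z}$ (in characteristic $2$ this forces both spaces to be even-dimensional, in agreement with the paper's convention). I do not expect a serious obstacle anywhere; the only care needed is the bookkeeping in the congruences of (2) and (3) — keeping track of which terms are merely integral and which are actually divisible by $\ell$ — together with the repeated use of the identification of $\Lambda^{\perp}$ with the full $B$-dual of $\Lambda$, which is precisely what makes the radical computations in (5) fall out at once.
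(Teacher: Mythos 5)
Your proof is correct and follows essentially the same route as the paper's: identical arguments for (1), (2), and (4), the same radical computation for (5), with only minor stylistic differences in (3) and the $Q''$ part of (5), where you compute directly while the paper invokes the duality trick of replacing $(\Lambda, Q)$ by $(\Lambda^\perp, \ell Q)$.
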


\begin{proof} (1) Our assumption that $Q$ takes integral values on
$\Lambda$ implies that $B$ takes integral values on $\Lambda \times
\Lambda$, which is to say  that
$\Lambda \subset \Lambda^\perp$. Similarly, our assumption that $\ell Q$
takes integral values on $\Lambda$ implies that $\Lambda^\perp \subset
\ell^{-1}\Lambda$ (equivalently, that
$\ell \Lambda^\perp \subset \Lambda$). 

(2) We must check that $Q(x+\ell y)$ is congruent to
$Q(x)$ modulo $\ell$ for all $x \in \Lambda$, $y \in \Lambda^\perp$. This
is indeed so, because  
$Q(x+\ell y)-Q(x)=Q(\ell y)+ B(x,\ell y)$, and 
\begin{align*} Q(\ell y)&=\ell (\ell Q(y)) \in \ell \mathbb Z, \\ B(x,\ell
y)&=\ell B(x,y) \in \ell\mathbb Z.
\end{align*} 

(3) follows from (2), applied to the quadratic form $\ell Q$ and the
lattice $\Lambda^\perp$. (The perpendicular of $\Lambda^\perp$ with
respect to $\ell Q$ is $\ell^{-1}\Lambda$.) 

(4) follows from the fact that, after tensoring with $\mathbb Z[1/\ell]$,
the inclusions in (1) become equalities. 

(5) We begin by proving that $Q'$ is nondegenerate. The bilinear
form $B$ takes values in $\ell\mathbb Z$ when one of its arguments lies in
$\Lambda$ and the other in $\ell\Lambda^\perp$. Therefore $B$ induces a
symmetric bilinear form 
\[
B':\,\Lambda/\ell\Lambda^\perp \times \Lambda/\ell\Lambda^\perp
\to
\mathbb Z/\ell\mathbb Z, 
\] and it is evident that $B'$ coincides with the bilinear form
obtained by polarizing $Q'$. We must show that $B'$ is
nondegenerate. Consider an element $\bar x \in \Lambda/\ell\Lambda^\perp$
that pairs trivially with all elements in $\Lambda/\ell\Lambda^\perp$, and
represent $\bar x$ by an element $x \in \Lambda$. Then 
$B(x,y)$ lies in $\ell\mathbb Z$ for all $y \in \Lambda$, so that  
$x \in \ell\Lambda^\perp$ and hence $\bar x=0$. 

The  nondegeneracy of  $Q''$  follows from  the method 
used to derive (3) from (2). 
\end{proof}

\subsection{The homomorphism $\Aut(\Lambda,Q) \to O(Q') \times O(Q'')$} 

The next lemma concerns $\Aut(\Lambda,Q)$, the group of linear 
automorphisms of $\Lambda_\mathbb Q$ that preserve both $Q$ and $\Lambda$. 
In the lemma we assume that $\ell$ is prime. The previous lemma then provides
nondegenerate quadratic forms
\begin{itemize}
\item
 $Q'$ on  the $\mathbb F_\ell$-vector space 
$ \Lambda/\ell\Lambda^\perp$, and 
\item
$Q''$ on  the $\mathbb F_\ell$-vector space 
$\Lambda^\perp/\Lambda$.
\end{itemize}
Now suppose that we are given $g \in \Aut(\Lambda,Q)$. Then $g$ preserves $\Lambda$
and $\Lambda^\perp$,
so it induces a linear transformation $g'$ on $ \Lambda/\ell\Lambda^\perp$, as well
as a linear transformation
$g''$ on $\Lambda^\perp/\Lambda$. It is easy to see that both $g'$ and $g''$ are
orthogonal transformations,
and of course $g \mapsto (g',g'')$ is a homomorphism 
from $\Aut(\Lambda,Q)$ to $O(Q') \times O(Q'')$. In the next lemma 
we compute $g'$, $g''$ for reflections $g=r_v$ in certain vectors $v$.

\begin{lemma} \label{lem.Qv1}
Let $v \in \Lambda$ and suppose that $Q(v)$ is a unit. Consider the
reflection $g:=r_v$. Then 
\begin{itemize}
\item $g \in \Aut(\Lambda,Q)$,
\item $g'=r_{v'}$, where $v'$ denotes the image of $v$ under $\Lambda
\twoheadrightarrow \Lambda/\ell\Lambda^\perp$,
\item $g''$ is the identity element.
\end{itemize}
Similarly, now let $v \in \Lambda^\perp$ and suppose that $(\ell Q)(v)$ is a
unit. Consider the reflection $g:=r_v$. Then 
\begin{itemize}
\item $g \in \Aut(\Lambda,Q)$,
\item $g'$ is the identity element,
\item $g''=r_{v''}$, where $v''$ denotes the image of $v$ under
$\Lambda^\perp
\twoheadrightarrow \Lambda^\perp/\Lambda$.
\end{itemize}
\end{lemma}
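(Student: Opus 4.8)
The plan is to work throughout with the explicit reflection formula
\[
r_v(x) \;=\; x - \frac{B(x,v)}{Q(v)}\,v,
\]
valid whenever $Q(v)\neq 0$, and — as one checks by expanding $Q(r_v(x))$ — orthogonal in every characteristic. The whole lemma then reduces to tracking, in each of the two stated situations, which of the lattices $\ell\Lambda^\perp\subset\Lambda\subset\Lambda^\perp$ the ``correction term'' $\tfrac{B(x,v)}{Q(v)}\,v$ lands in.

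First I would treat the case $v\in\Lambda$ with $Q(v)$ a unit. Then $Q(v)=\pm1$, and since $B(x,v)=Q(x+v)-Q(x)-Q(v)\in\mathbb Z$ for $x\in\Lambda$, the correction term lies in $\mathbb Z v\subset\Lambda$; hence $r_v$ preserves $\Lambda$, and as it also preserves $Q$ we get $g=r_v\in\Aut(\Lambda,Q)$. For $g''$: for $x\in\Lambda^\perp$ one still has $B(x,v)\in\mathbb Z$ (because $v\in\Lambda$), so the correction term again lies in $\mathbb Z v\subset\Lambda$ and $g$ acts trivially modulo $\Lambda$. For $g'$: reduce the reflection formula modulo $\ell\Lambda^\perp$; by Lemma~\ref{lem.barQ} the value $Q'(v')$ is the reduction of $Q(v)$, hence a unit in $\mathbb F_\ell$, and $B'$ is the reduction of $B$, so the induced map is $\bar x\mapsto\bar x-\tfrac{B'(\bar x,v')}{Q'(v')}v'=r_{v'}(\bar x)$, as claimed.

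For the second situation, $v\in\Lambda^\perp$ with $(\ell Q)(v)$ a unit, I would observe that this is the same statement applied to the triple $(\Lambda^\perp,\ell Q,\ell)$, which satisfies the standing hypotheses of Appendix~\ref{sub.lemmas} because $\ell Q$ is integral on $\Lambda^\perp$ and $\ell\cdot(\ell Q)$ is integral on the perpendicular of $\Lambda^\perp$ with respect to $\ell Q$, namely $\ell^{-1}\Lambda$ (cf.\ the proof of Lemma~\ref{lem.barQ}(3)). Under this substitution $\Aut(\Lambda^\perp,\ell Q)=\Aut(\Lambda,Q)$ (both are the stabilizer of $\Lambda$, equivalently of $\Lambda^\perp$, in $O(Q)(\mathbb Q)$, using $(\Lambda^\perp)^\perp=\Lambda$), the role of $\Lambda/\ell\Lambda^\perp$ is played by $\Lambda^\perp/\Lambda$ with $Q'$ becoming $Q''$, and the role of $\Lambda^\perp/\Lambda$ is played by $\ell^{-1}\Lambda/\Lambda^\perp$, which multiplication by $\ell$ identifies isometrically with $\Lambda/\ell\Lambda^\perp$; feeding these identifications into the first case yields the three assertions. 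Alternatively the computation can be run directly: here $Q(v)^{-1}=\pm\ell$, so for $x\in\Lambda$ the correction term is $\pm B(x,v)\,(\ell v)\in\ell\Lambda^\perp$ (giving $g\in\Aut(\Lambda,Q)$ and $g'=\id$), while for $x\in\Lambda^\perp$ one has $\ell B(x,v)\in\mathbb Z$, so the correction term equals $\pm(\ell B(x,v))\,v\in\mathbb Z v\subset\Lambda^\perp$, and comparison with the reflection formula for $Q''$ — whose value $Q''(v'')$ is the reduction of $(\ell Q)(v)$, a unit — identifies $g''$ with $r_{v''}$.

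I do not expect a genuine obstacle: the only thing requiring care is bookkeeping — keeping straight the pair $\Lambda\subset\Lambda^\perp$, the two induced forms $Q'$ on $\Lambda/\ell\Lambda^\perp$ and $Q''$ on $\Lambda^\perp/\Lambda$, and the powers of $\ell$ that enter once $v\notin\Lambda$. The step most prone to index- and sign-shuffling is the duality identification in the second situation, so I would present the direct computation as the primary argument and mention the ``apply the first case to $(\Lambda^\perp,\ell Q,\ell)$'' viewpoint only as a remark.
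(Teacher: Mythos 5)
Your proof is correct and follows essentially the same approach as the paper: write down the explicit reflection formula and chase the lattice $\ell\Lambda^\perp\subset\Lambda\subset\Lambda^\perp$ through it. The paper handles the second half purely by the substitution $(\Lambda,Q)\rightsquigarrow(\Lambda^\perp,\ell Q)$ that you mention as the alternative; your direct computation for that half is an equivalent, slightly more verbose route, and your verification that the substitution satisfies the standing hypotheses is a worthwhile check that the paper leaves implicit.
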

\begin{proof}
The second half of the lemma follows easily from the first. One just has
to apply the first part with $(\Lambda,Q)$ replaced by $(\Lambda^\perp,\ell 
Q)$. It remains to prove the first half of the lemma, so we consider $v
\in \Lambda$ such that $Q(v)$ is a unit. Then $r_v$ is given by the
formula 
\[
r_v(x)=x-\frac{B(v,x)}{Q(v)} v.
\] 
Since $Q(v)$ is a unit, it is clear that $r_v$ lies in $\Aut(\Lambda,Q)$
and that $(r_v)'=r_{v'}$. It remains only to check that $(r_v)''$ is the
identity. In other words, we must show that 
$r_v(x)-x$ lies in $\Lambda$ whenever $x \in \Lambda^\perp$, and this is
clear   because
$Q(v)$ is a unit, $B(v,x)$ is integral, and $v \in \Lambda$. 
\end{proof}

\section{Comparison of Stiefel-Whitney classes before and after reduction modulo
$p$} \label{app.SWredEll}

For the purposes of this paper we just 
need to understand how Stiefel-Whitney classes behave under reduction mod $2$ and
$3$, but in this appendix
we might as well treat a general prime number $p$. Consider a nondegenerate
quadratic space $(\Lambda,Q)$ over
$\mathbb Z_p$. (When $p=2$, the rank of the free module $\Lambda$ is necessarily
even.)
Then $O(Q)$, which we abbreviate to $O$, is a smooth group scheme over $\mathbb
Z_p$.
We choose algebraic closures $\overline{\mathbb Q}_p$, $\overline{\mathbb F}_p$ of
$\mathbb Q_p$,
$\mathbb F_p$ respectively. 

Let $\Gamma$ be a profinite group, and suppose that we are given a homomorphism 
$\rho : \Gamma \to O(\mathbb Z_p)$ with open kernel. From it we obtain orthogonal
representations
$\rho_{\mathbb Q_p} : \Gamma \to O(\overline{\mathbb Q}_p)$ and 
$\rho_{\mathbb F_p} : \Gamma \to O(\overline{\mathbb F}_p)$. 

\begin{lemma} 
When $p \ne 2$, there is an equality 
\[
{SW}(\rho_{\mathbb Q_p}) = {SW}(\rho_{\mathbb F_p})
\]
of elements in $1 + H^1(\Gamma,\mathbb F_2) + H^2(\Gamma,\mathbb F_2)$. 
When $p=2$ there is an equality ${SW}_1(\rho_{\mathbb Q_p}) = 
{SW}_1(\rho_{\mathbb F_p}) \in H^1(\Gamma,\mathbb F_2)$.
\end{lemma}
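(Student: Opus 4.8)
The plan is to reduce the whole statement, for both parts, to the fact that a single cohomology class over $\mathbb Z_p$ pulls back to the two sides, exploiting that the reduction map and the inclusion both factor through one conveniently chosen intermediate ring.

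First I would dispose of the first Stiefel--Whitney class, which works for every $p$ and already settles the case $p=2$. By definition $SW_1(\rho_k)=\deg\circ\rho_k$, where $\deg\colon O(Q)\to\mathbb F_2$ is the degree homomorphism cut out by the exact sequence \eqref{SES.ClO}; by that construction it is a homomorphism of group schemes over $\mathbb Z_p$ whose target is the \emph{constant} group scheme $\mathbb F_2$. Since $\rho$ already takes values in $O(Q)(\mathbb Z_p)$, the composite $\deg\circ\rho\colon\Gamma\to\mathbb F_2$ is defined at the level of $\mathbb Z_p$-points, and base change along $\mathbb Z_p\hookrightarrow\overline{\mathbb Q}_p$ or $\mathbb Z_p\twoheadrightarrow\overline{\mathbb F}_p$ does not change it. Hence $SW_1(\rho_{\mathbb Q_p})=\deg\circ\rho=SW_1(\rho_{\mathbb F_p})$.

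Next I would treat $SW_2$ under the hypothesis $p\ne 2$, so that $2\in\mathbb Z_p^\times$ and the central extension $1\to\mu_2\to\widetilde{\Pin}(Q)\to O(Q)\to 1$ used to define $SW_2$ over a field is the base change of a central extension of smooth affine group schemes over $\mathbb Z_p$, with $\mu_2$ finite \'etale and $\widetilde{\Pin}(Q)\to O(Q)$ a $\mu_2$-torsor. (Concretely one may take $\widetilde{\Pin}(Q)=\ker\bigl(N\colon Cl(Q)\to\mathbb G_m\bigr)$, using that $N$ in \eqref{eq.DefOfN} restricts to squaring on $\mathbb G_m\subset Cl(Q)$; then $\widetilde{\Pin}(Q)$ meets $\mathbb G_m$ in $\mu_2$ and still surjects onto $O(Q)$.) I would then introduce $A=\overline{\mathbb Z}_p$, the integral closure of $\mathbb Z_p$ in $\overline{\mathbb Q}_p$: it is a strictly henselian local ring with fraction field $\overline{\mathbb Q}_p$ and residue field $\overline{\mathbb F}_p$, so it has no nontrivial finite \'etale covers and the \'etale $\mu_2$-torsor $\widetilde{\Pin}(Q)\to O(Q)$ splits over every $A$-point; thus $\widetilde{\Pin}(Q)(A)\to O(Q)(A)$ is surjective and we obtain a central extension of abstract groups
\[
1\to\{\pm 1\}\to\widetilde{\Pin}(Q)(A)\to O(Q)(A)\to 1 .
\]
This extension maps, via $A\hookrightarrow\overline{\mathbb Q}_p$ and via $A\twoheadrightarrow\overline{\mathbb F}_p$, to the two extensions defining $SW_2(\rho_{\mathbb Q_p})$ and $SW_2(\rho_{\mathbb F_p})$, and in each case the induced map on $\mu_2$-terms is the identity of $\{\pm 1\}$ (this is where $p\ne2$ is used again: in characteristic $p\ne 2$ the residue reduction identifies $\{\pm1\}$ with $\mu_2(\overline{\mathbb F}_p)$). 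The representation $\rho$ gives a class $[\rho_A]\in H^1(\Gamma,O(Q)(A))$, and by functoriality of the non-abelian connecting map for this morphism of central extensions, both $SW_2(\rho_{\mathbb Q_p})$ and $SW_2(\rho_{\mathbb F_p})$ equal the single class $\delta_A([\rho_A])\in H^2(\Gamma,\mathbb F_2)$. Together with the $SW_1$ computation this yields $SW(\rho_{\mathbb Q_p})=SW(\rho_{\mathbb F_p})$ for $p\ne 2$.

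The only real content is choosing the intermediate ring $A$ over which the central $\mu_2$-extension remains exact on $\Gamma$-points; everything else is functoriality of connecting maps and the observation that $\deg$ is defined over $\mathbb Z_p$. The step that genuinely requires $p\ne 2$ is that $\mu_2$ is \'etale over $\mathbb Z_p$ (so that $\mu_2(A)=\mu_2(\overline{\mathbb Q}_p)=\mu_2(\overline{\mathbb F}_p)=\{\pm1\}$ compatibly) — precisely the reason the full equality breaks down when $p=2$, where $SW_2$ is set to $0$ in characteristic $2$ and only $SW_1$ is claimed.
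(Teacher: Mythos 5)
Your proof is correct and takes essentially the same route as the paper: the $SW_1$ case via the observation that $\deg$ is a homomorphism of group schemes over $\mathbb Z_p$ with constant target, and the $SW_2$ case via the commuting diagram of double covers over the strictly henselian local ring $\overline{\mathbb Z}_p$, where the middle cover surjects on points and its $\mu_2$-kernel maps isomorphically to the kernels over $\overline{\mathbb Q}_p$ and $\overline{\mathbb F}_p$. The only cosmetic difference is that you phrase the surjectivity over $\overline{\mathbb Z}_p$ in terms of splitting an \'etale $\mu_2$-torsor (and offer a concrete model $\ker N$ for $\widetilde{\Pin}$), whereas the paper invokes smoothness of $\widetilde{\Pin}\to O$ directly; these are the same lifting fact for strictly henselian local rings.
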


\begin{proof}
We begin with the first Stiefel-Whitney class, which is obtained by applying the
sign character
$\deg:O \to \mathbb F_2$ to the given orthogonal representation. Now $\deg$ is a 
homomorphism of group schemes over $\mathbb Z_p$, so  the diagram 
\[
\begin{CD}
O(\overline{\mathbb F}_p) @<<< O(\mathbb Z_p) @>>> O(\overline{\mathbb Q}_p) \\
@V{\deg}VV @V{\deg}VV @V{\deg}VV \\
\mathbb F_2 @= \mathbb F_2 @= \mathbb F_2
\end{CD}
\] 
commutes, which proves that ${SW}_1(\rho_{\mathbb Q_p}) = 
{SW}_1(\rho_{\mathbb F_p})$. 

Now assuming that $p \ne 2$, we need to show that the
 second Stiefel-Whitney classes of
$\rho_{\mathbb Q_p}$ and $\rho_{\mathbb F_p}$ coincide. 
For this we must 
compare the double covers $\widetilde{\Pin}(k) \to O(k)$ for 
$k=\overline{\mathbb Q}_p$
and $k=\overline{\mathbb F}_p$. 
This is done using the commutative diagram 
\[
\begin{CD} 
\widetilde{\Pin}(\overline{\mathbb F}_p) @<<< \widetilde{\Pin}(\overline{\mathbb
Z}_p) @>>>
\widetilde{\Pin}(\overline{\mathbb Q}_p)  \\
@VVV @VVV @VVV \\
O(\overline{\mathbb F}_p) @<<< O(\overline{\mathbb Z}_p)
 @>>>  O(\overline{\mathbb Q}_p),  \\
\end{CD}
\] 
where $\overline{\mathbb Z}_p$ is the integral closure of $\mathbb Z_p$ in
$\overline{\mathbb Q}_p$.
The middle vertical arrow is  surjective because  
$\overline{\mathbb Z}_p$ is a strictly henselian local ring and 
$\widetilde{\Pin} \to O$ is a surjective smooth morphism 
(by our assumption that $p
\ne 2$).
So the middle arrow is a double cover whose kernel 
$\mu_2(\overline{\mathbb Z}_p)$
maps
isomorphically to the kernels $\mu_2(\overline{\mathbb F}_p)$,
$\mu_2(\overline{\mathbb Q}_p)$
of the other two double covers in the diagram.  
\end{proof}


\providecommand{\bysame}{\leavevmode\hbox to3em{\hrulefill}\thinspace}
\providecommand{\MR}{\relax\ifhmode\unskip\space\fi MR }
\providecommand{\MRhref}[2]{%
  \href{http://www.ams.org/mathscinet-getitem?mr=#1}{#2}
}
\providecommand{\href}[2]{#2}

\end{document}